\definecolor{tocolor}{rgb}{.1,.1,.1}
\definecolor{urlcolor}{rgb}{.2,.2,.6}
\definecolor{linkcolor}{rgb}{.1,.1,.3}
\definecolor{citecolor}{rgb}{.3,.2,.1}
\newtheorem{theorem}{Theorem}[section]
\newtheorem{proposition}[theorem]{Proposition}
\newtheorem{lemma}[theorem]{Lemma}
\theoremstyle{definition}    
\newtheorem{definition}[theorem]{Definition}
\theoremstyle{remark}
\newtheorem{remark}[theorem]{Remark}
\newtheorem{remarks}[theorem]{Remarks}
\newtheorem{example}[theorem]{Example}
\newcommand\A{\mathcal{A}}
\renewcommand{\AA}{\mathbb{A}}
\newcommand{\Cour}[1]      {[\![#1]\!]}
\newcommand\M{\mathcal{M}}
\renewcommand{\L}{\mathcal{L}}
\newcommand{\LL}{\mathsf{L}}
\newcommand{\HH}{\mathsf{H}}
\renewcommand{\O}{\mathcal{O}}
\newcommand{\T}{\mathbb{T}}
\newcommand{\ca}{\mathcal}
\newcommand{\U}{\on{U}}
\newcommand{\E}{\ca{E}}
\newcommand{\F}{\mathcal{F}}
\newcommand{\R}{\mathbb{R}}
\newcommand{\Z}{\mathbb{Z}}
\newcommand\pt{\on{pt}}
\renewcommand{\P}{\ca{P}}
\newcommand\lie[1]{\mathfrak{#1}}
\newcommand{\g}{\lie{g}}
\newcommand{\dd}{\lie{d}}
\renewcommand{\a}{\mathsf{a}}
\renewcommand{\u}{\lie{u}}
\newcommand{\on}{\operatorname}
\newcommand{\Aut}{ \on{Aut} } 
\newcommand{\Map}{\on{Map}}
\newcommand{\Ad}{ \on{Ad} }
\newcommand{\ad}{\on{ad}}
\newcommand{\Hol}{ \on{Hol} }
\renewcommand{\ker}{ \on{ker}}
\newcommand{\ran}{\on{ran}}
\newcommand{\gr}{\on{gr}}
\newcommand{\Mult}{  \on{Mult}}
\newcommand{\red}{{\on{red}}}
\newcommand{\da}{\dasharrow}
\newcommand\qu{/\kern-.7ex/} 
\newcommand{\TG}{\mathbb{T}G}
\newcommand{\hra}{\hookrightarrow}
\renewcommand{\d}{{\mbox{d}}}
\newcommand{\ol}{\overline}
\renewcommand{\S}{\mathcal{S}}
\newcommand\sig{\sigma}
\newcommand\om{\omega}
\newcommand{\f}{\frac}
\newcommand{\p}{\partial}
\renewcommand{\l}{\langle}
\renewcommand{\r}{\rangle}
\newcommand{\rra}{\rightrightarrows}
\newcommand\hh{{\f{1}{2}}}
\newcommand{\ti}{\tilde}
\newcommand{\eeq}{\end{eqnarray*}}
\renewcommand{\H}{\ca{H}}
\newcommand{\pr}{\on{pr}}
\newcommand{\wh}{\widehat}
\newcommand{\wt}{\widetilde}
\newcommand{\mf}{\mathfrak}
\renewcommand{\subset}{\subseteq}
\renewcommand{\star}{*}
\newcommand{\II}{\mathrm{I}}
\def\bd{{\partial}}
\def\s{{\mathfrak{s}}}
\def\Inv{\on{Inv}}
\def\DD{\mathcal{T}}
\long\def\authornote#1{%
        \leavevmode\unskip\raisebox{-3.5pt}{\rlap{$\scriptstyle\diamond$}}
        \marginpar{\centering 
       \hbadness=10000 
        \def\baselinestretch{0.7}\tiny 
        \it #1\par}}
\newcommand{\eckhard}[1]{\authornote{EM: #1}}
\newcommand{\alejandro}[1]{}	
\begin{document}
\sloppy
\title{Dirac geometry of the holonomy fibration}
\date{\today}
\author{A. Cabrera}
\author{M. Gualtieri}
\author{E. Meinrenken}
\maketitle
\tableofcontents

\section{Introduction}

An invariant Poisson structure on a finite-dimensional principal
bundle $P\to B$ descends to a Poisson structure on the base. This is
immediate from the identification of functions on $B$ with invariant
functions on $P$, or alternatively, because the invariant Poisson bivector field
$\pi_P$ pushes down to a Poisson bivector field on $B$.

One is tempted to apply these facts to the following
infinite-dimensional setting.  Let $G$ be a connected Lie group. Its
loop group $LG=\on{Map}(S^1,G)$ acts by gauge transformations on the space
\[ \A=\Omega^1(S^1,\g) \]
of connections on the trivial $G$-bundle over the circle. The based
loop group $L_0G\subseteq LG$ acts freely, and the holonomy of a
connection identifies $\A/L_0G$ with $G$. We will refer to the resulting
principal $L_0G$-bundle
\[ \on{Hol}\colon \A\to G\]
as the \emph{holonomy fibration}. Suppose the Lie algebra carries an
invariant metric, used to identify $\g$ with
$\g^\star$. It defines a central extension $\wh{L\g}$ of $L\g$ by
$\R$, and one may regard $\A$ as the affine subspace of
$\wh{L\g}^\star$ at level $1$. Formally, it carries a Poisson
structure called the Lie-Poisson structure, with symplectic
leaves the level 1 coadjoint orbits of $LG$.

The naive attempt to push this down to a Poisson structure on $G$ runs
into problems, related to the precise meaning of a Poisson structure
in infinite dimensions. Indeed, the Lie-Poisson structure on
$\A$, viewed as a bilinear bracket $\{\cdot,\cdot\}$ on functions,
cannot be defined on \emph{all} functions; its domain does not even
contain all pullbacks $\Hol^* f$ with $f\in
C^\infty(G)$. Similarly, the Lie-Poisson structure on $\A$
cannot be a genuine bivector field, since sections of $\wedge^2 T\A$,
by definition, have only finite rank.

In this paper, we shall take a third viewpoint, regarding the
Lie-Poisson structure on $\A$ as a \emph{Dirac structure}. Recall
that a Dirac structure on a finite-dimensional manifold $Q$ is a
Lagrangian sub-bundle $E\subseteq TQ\oplus T^\star Q$ satisfying a certain
integrability condition. Poisson structures are Dirac structures for
which $E$ is the graph of a skew-adjoint bundle map $T^\star Q\to TQ$. In
finite dimensions, this is equivalent to the property $E\cap TQ=0$.

The definition of Dirac structures carries over to
infinite-dimensional Hilbert manifolds, but here the conditions of $E$
being a graph or having trivial intersection with the tangent bundle
are no longer equivalent. We will call a Dirac structure $E$ with the
latter property a \emph{weak Poisson structure}. Equivalently, the weak Poisson structures 
are described as a family of skew-adjoint operators $D_q\colon \on{dom}(D_q)\to T_qQ$, with dense domain in $T^\star_qQ$. 
The leaves of a weak Poisson structure carry closed 2-forms that are weakly
symplectic. Taking $\A$ to consist of connections of a fixed Sobolev
class (e.g. $L^2$ or higher),  we observe that the Lie-Poisson structure is well-defined 
as a weak Poisson structure in the above sense. The corresponding skew-adjoint operators are the covariant derivatives $\partial_A$. 

Using the reduction procedure for Dirac structures \cite{bur:red}, this weak Poisson structure may be pushed down under the map $\Hol$.  We will show that the result is the well-known \emph{Cartan-Dirac structure} on $G$. The Cartan-Dirac structure had been discovered independently by Alekseev, \v{S}evera and Strobl in the late 1990s, and plays an important role in the theory of $D$-branes \cite{car:fu,st:db,kli:wzw} as well as for quasi-Hamiltonian $G$-spaces \cite{al:pur,al:mom}. Our reduction procedure extends to Hamiltonian spaces, and clarifies the
correspondence \cite{al:mom} between Hamiltonian loop group spaces \cite{me:lo} and
q-Hamiltonian $G$-spaces. We also describe multiplicative properties of the Cartan-Dirac structure \cite{al:pur,lib:dir} from the point of view of reduction from suitable spaces of connections.

In this article, we will mostly work with a closely related holonomy fibration $\Hol\colon \A_\II\to G$, given by connections on the interval $\II=[0,1]$, with an action of the gauge group 
$G_\II=\on{Map}(\II,G)$. The `Lie-Poisson' structure on $\A_\II$ is a Dirac structure described by connections $\partial_A$ as before, but whose domain involves periodic boundary conditions. The reduction by the group $G_{\II,\partial\II}$ of  gauge transformations trivial at the boundary, results in the Cartan-Dirac structure. 
From this point of view, we may consider alternative boundary conditions for the family of operators $\partial_A$, given by Lagrangian Lie subalgebras $\s\subset \ol\g\oplus\g$. The corresponding weak Poisson structures on $\A_\II$ reduce to generalizations of the Cartan-Dirac structure.


\vskip.1in
\noindent{\bf Acknowledgements.} 
A.~C. thanks University of Toronto for hospitality during the beginning of this project.
M.~G. was supported by an NSERC Discovery Grant and acknowledges support from U.S. National Science Foundation grants DMS 1107452, 1107263, 1107367 ``RNMS: GEometric structures And Representation varieties'' (the GEAR Network).  
E.~M. was supported by an NSERC Discovery Grant. 
We thank Henrique Bursztyn for helpful discussions and for posing the problem of determining the geometric nature of the reduction of Hamiltonian loop group spaces to quasi-Hamiltonian spaces.

%
\section{Dirac structures in infinite dimensions}
In this section, we review the theory of Courant algebroids, Dirac structures, and their reduction in an infinite-dimensional context. For a
treatment of differential geometry on Banach manifolds and Hilbert
manifolds, see e.g.~\cite{ab:ma}.

Much of the material is a direct extension of the finite-dimensional
theory. Special care needs to be taken due to the fact that the sum of
closed subspaces of a Banach space need not be closed. These problems
are already apparent in the linear version of the theory, described below.

\subsection{Linear Dirac geometry in infinite dimensions}
\label{subsec:technical}
Throughout this paper, the terms \emph{Banach space} and \emph{Hilbert
  space} designate a real topological vector space $V$ whose topology is
defined by a Banach norm and Hilbert inner product, respectively.  The
norm or inner product itself is not considered part of the
structure. 
By \cite{lin:com}, a Banach space is a Hilbert space if and only if
every closed subspace admits a closed complement. For this reason, we
will mainly work with Hilbert spaces and Hilbert manifolds.

A continuous symmetric bilinear form $\l\cdot,\cdot\r\colon V\times
V\to \R$ on a Hilbert space $V$ is called \emph{non-degenerate} if the associated map $V\to V^\star ,\
v\mapsto \l v,\cdot\r$ is an isomorphism. We will
refer to $\l\cdot,\cdot\r$ as a pseudo-Riemannian metric, or simply as
a \emph{metric}, and call $V$ a \emph{metrized Hilbert space}. We
stress that $\l\cdot,\cdot\r$ is not necessarily a Hilbert space inner
product.

If $F$ is a subspace of a metrized Hilbert space $V$, denote by
$F^\perp$ its orthogonal relative to the metric.  Accordingly,
$F$ is called \emph{isotropic} if $F\subseteq F^\perp$,
\emph{co-isotropic} if $F^\perp\subseteq F$, and \emph{Lagrangian} if
$F=F^\perp$. A \emph{Lagrangian splitting} of $V$ is a direct sum
decomposition $V=F_1\oplus F_2$ into Lagrangian subspaces. In finite
dimensions, this is equivalent to $F_1\cap F_2=0$, but in infinite
dimensions this is stronger:
\begin{example}\label{ex:hh1}
  Suppose $\H$ is a Hilbert space, and equip $V=\H\oplus \H^\star$
  with the metric 
  \[ 
  \l v_1+\mu_1,\,v_2+\mu_2\r=\mu_2(v_1)+\mu_1(v_2)
  \] 
  for $v_1,v_2\in \H,\ \mu_1,\mu_2\in \H^\star $. Then $\H,\,\H^\star$
  are Lagrangian subspaces.
  Suppose 
  \[ 
  A\colon \on{dom}(A)\to \H^\star 
  \] 
  is an unbounded linear operator with dense domain
  $\on{dom}(A)\subseteq \H$. By definition, $A$ is a closed operator if
  and only if its graph $\gr(A)$ is closed, and is an unbounded
  skew-adjoint operator if and only if $\on{gr}(A)$ is Lagrangian. 
  Now suppose that $A$ is an unbounded skew-adjoint operator with 
  $\on{dom}(A)\neq \H$. Then $\gr(A),\ \H^\star $
  are Lagrangian subspaces with trivial intersection, but
  $\gr(A)+\H^\star \neq \H\oplus \H^\star$.
\end{example}
If $C\subseteq V$ is a closed co-isotropic subspace of a metrized
Hilbert space, we define a \emph{reduced space} $V_C=C/C^\perp$. It
inherits a metric from the metric on $V$. Given a subspace
$F\subseteq V$, define $F_C=(F\cap C)/(F\cap C^\perp)$. In finite
dimensions, the reduction $L_C$ of a Lagrangian subspace $L$ is again
Lagrangian, but this need not be the case in infinite dimensions:
\begin{example}\label{ex:hh}
  In the setting of Example \ref{ex:hh1}, pick $v\in \H-\on{dom}(A)$,
  and let $C=\on{span}(v)\oplus \H^\star $. Then $C$ is coisotropic,
  with $C^\perp=\on{ann}(v)\subseteq \H^\star $. Hence
  $C/C^\perp=\on{span}(v)\oplus \on{span}(v)^\star $ is
  2-dimensional. The Lagrangian subspace $L=\gr(A)$ satisfies
  $L\cap C=0$, hence $L_C=0$ is not Lagrangian.
\end{example}
To ensure that the reduction of a Lagrangian subspace is Lagrangian,
we need an additional condition:
\begin{proposition}\label{prop:linred1}
  Let $V$ be a metrized Hilbert space, and
  $C$ a closed co-isotropic subspace of $V$. Let $L\subseteq V$ be a
  Lagrangian subspace with the property that $L+C$ is closed. Then
  $L_C=(L\cap C)/(L\cap C^\perp)$ is Lagrangian in $V_C$.
\end{proposition}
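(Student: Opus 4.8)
The plan is to establish the two inclusions $L_C\subseteq L_C^\perp$ and $L_C^\perp\subseteq L_C$, where from now on $\perp$ denotes orthogonality for the induced metric on $V_C=C/C^\perp$. First I would record that $L_C$ is genuinely a subspace of $V_C$: coisotropy of $C$ gives $C^\perp\subseteq C$, so $L\cap C^\perp=(L\cap C)\cap C^\perp$ and $L_C$ is the image of $L\cap C$ under the quotient map $q\colon C\to V_C$. With the induced metric $\langle q(x),q(y)\rangle=\langle x,y\rangle$ (well defined since $C^\perp$ is the radical of $C$), the inclusion $L_C\subseteq L_C^\perp$ is immediate: representatives of elements of $L_C$ lie in $L\cap C\subseteq L=L^\perp$ and are therefore mutually orthogonal. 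This direction uses nothing about closedness.

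The real content is the reverse inclusion. Since $q^{-1}(L_C)=(L\cap C)+C^\perp$ and $q^{-1}(L_C^\perp)=C\cap(L\cap C)^\perp$, and both of these subspaces contain $C^\perp$, the inclusion $L_C^\perp\subseteq L_C$ is equivalent to the inclusion of subspaces of $V$
\begin{equation*}
C\cap(L\cap C)^\perp\subseteq(L\cap C)+C^\perp.
\end{equation*}
I claim this drops out immediately from the identity
\begin{equation*}
(L\cap C)^\perp=L+C^\perp\tag{$\star$}
\end{equation*}
together with coisotropy. Indeed, assuming $(\star)$, let $x\in C$ with $x=\ell+c'$, $\ell\in L$, $c'\in C^\perp$. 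Because $C^\perp\subseteq C$ we have $c'\in C$, hence $\ell=x-c'\in C$, so $\ell\in L\cap C$ and $x\in(L\cap C)+C^\perp$.

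It remains to prove $(\star)$, and this is the only step that uses the hypothesis that $L+C$ is closed. The inclusion $\supseteq$ is automatic, since $L=L^\perp$ and $C^\perp$ each annihilate $L\cap C$. For $\subseteq$ I would transport the metric to a compatible Hilbert inner product $(\cdot,\cdot)$ by writing $\langle v,w\rangle=(Jv,w)$ for the bounded, self-adjoint, invertible operator $J$ furnished by non-degeneracy; then $F^\perp=J^{-1}(F^{\perp_H})$, so the standard Hilbert-space rules for closed subspaces $A,B$,
\begin{equation*}
(A\cap B)^\perp=\overline{A^\perp+B^\perp},\qquad A+B\ \text{closed}\iff A^\perp+B^\perp\ \text{closed},
\end{equation*}
pass verbatim to the metric orthogonal (both sides transform under the topological isomorphism $J^{-1}$, which commutes with sums, intersections, and closures). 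Applying the closedness equivalence to the pair $(L,C^\perp)$ and using $L^\perp=L$, $(C^\perp)^\perp=C$, we see that $L+C^\perp$ is closed exactly because $L+C$ is. Hence $(L\cap C)^\perp=\overline{L^\perp+C^\perp}=\overline{L+C^\perp}=L+C^\perp$, which is $(\star)$.

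The main obstacle is precisely this last point: without closedness one only obtains $(L\cap C)^\perp=\overline{L+C^\perp}$, and Example \ref{ex:hh} shows the Proposition genuinely fails in that case. The remaining ingredient that is not elementary is the equivalence ``$A+B$ closed $\iff A^\perp+B^\perp$ closed'' for closed subspaces of a Hilbert space, which I would cite (it follows from the closed range theorem, or from positivity of the Friedrichs angle between $A$ and $B$); everything else is the finite-dimensional reduction argument.
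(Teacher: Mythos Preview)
Your proof is correct and follows essentially the same route as the paper's: both identify $q^{-1}(L_C^\perp)=C\cap(L\cap C)^\perp$, use $(L\cap C)^\perp=\overline{L+C^\perp}$ together with the equivalence ``$L+C$ closed $\iff$ $L+C^\perp$ closed'' to drop the closure, and then apply coisotropy to obtain $C\cap(L+C^\perp)=(L\cap C)+C^\perp$. Your explicit reduction of the pseudo-Riemannian orthogonality rules to the Hilbert case via the operator $J$ spells out what the paper treats as background; the arguments are otherwise identical.
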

A proof is given in the Appendix, see Proposition
\ref{prop:linred}.
\begin{remark}\label{rem:ortrem}
  Given a metrized Hilbert space $V$, the sum $F_1+F_2$ of subspaces
  is closed if and only if $F_1^\perp+F_2^\perp$ is closed.  Hence,
  the condition in Proposition \ref{prop:linred1} is equivalent to the
  condition that $L+C^\perp$ be closed.
\end{remark}

\begin{remark}
  In subsequent sections, we use vector bundle versions of the
  results described above. We refer to a Hilbert vector bundle $V\to
  M$ over a Hilbert manifold, with a (pseudo-Riemannian) fiber metric
  $\l\cdot,\cdot\r$, as a \emph{metrized vector bundle}. Given a
  closed coisotropic subbundle $C\subseteq V$, the quotient
  $V_C=C/C^\perp$ inherits a metric.  For a Lagrangian sub-bundle
  $L\subseteq V$ the reduction $L_C=(L\cap C)/(L\cap C^\perp)$ is a
  Lagrangian subbundle provided $L+C$ is a closed subbundle. In
  particular, this is the case if the intersection is
  \emph{transverse}, i.e. $L+C=V$, or if $L\subseteq C$.
\end{remark}

For any metrized Hilbert space $V$, let
$\ol{V}$ denote the same Hilbert space with the opposite metric. A
\emph{Lagrangian relation}
\[ R\colon V_1\da V_2\]
between two metrized Hilbert spaces is a linear relation whose graph 
$\gr(R)\subseteq
V_2\times \ol{V}_1$ is Lagrangian. We will write $v_1\sim_R v_2$ if and only if 
$(v_2,v_1)\in \gr(R)$, and define the kernel and range of $R$ as 
\[ \ker(R)=\{v_1\in V_1|\ v_1\sim_R 0\},\  \ \ \ 
\ran(R)=\{v_2\in V_2|\ \exists v_1\in V_1\colon v_1\sim_R v_2\}.\]
The space $\ker(R)$ is closed, but $\ran(R)$ not necessarily so. 
Similarly, we define $\ker^\star(R)=\ker(R^\top)$ and $\on{ran}^\star(R)=\ran(R^\top)$, where 
$R^\top\colon V_2\to V_1$ is the transpose relation. We have 
\[ \ker(R)=\ran^\star(R)^\perp,\ \ \ker^\star(R)=\ran(R)^\perp.\]

Given another Lagrangian relation $R'\colon
V_2\da V_3$, one defines $R'\circ R$ as a composition of relations.  If the
$V_i$ are finite-dimensional, then $R'\circ R$ is again a Lagrangian
relation, but in infinite dimensions additional assumptions are
needed. We say that $R',R$ have \emph{transverse composition} if 
\begin{equation}\label{eq:strongtrans}
 \ran(R)+\ran^\star (R')=V_2.
\end{equation}
\begin{proposition}
If $R',\ R$ have transverse composition, then $R'\circ R$ is a Lagrangian relation. 
\end{proposition}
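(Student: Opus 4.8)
The plan is to realize the composition $R'\circ R$ as a linear reduction of a product Lagrangian and then invoke Proposition \ref{prop:linred1}. Form the metrized Hilbert space
\[ W=(V_3\times \ol V_2)\times (V_2\times \ol V_1),\]
equipped with the product metric, and set $L=\gr(R')\times \gr(R)$. Since the metric splits as a direct sum, the orthogonal of a product is the product of the orthogonals, so $L$ is a closed Lagrangian subspace of $W$. Introduce the ``diagonal'' subspace
\[ C=\{(v_3,v,v,v_1)\mid v_3\in V_3,\ v\in V_2,\ v_1\in V_1\},\]
which is the kernel of the continuous map $W\to V_2$, $(v_3,v,v',v_1)\mapsto v-v'$, and hence closed. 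A direct computation of orthogonals, using the sign convention of the product metric, gives $C^\perp=\{(0,v,v,0)\mid v\in V_2\}\subseteq C$, so $C$ is coisotropic, and the quotient map identifies $W_C=C/C^\perp$ with $V_3\times \ol V_1$ via $(v_3,v,v,v_1)\mapsto (v_3,v_1)$.

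Next I would identify the reduction $L_C$ with $\gr(R'\circ R)$. By the definition of composition of relations, $L\cap C$ consists exactly of the composable pairs $\big((v_3,v_2),(v_2,v_1)\big)$ with $(v_3,v_2)\in\gr(R')$ and $(v_2,v_1)\in\gr(R)$; projecting these to $V_3\times \ol V_1$ under $W\to W_C$ yields precisely the pairs $(v_3,v_1)$ with $v_1\sim_{R'\circ R} v_3$. Thus $L_C=\gr(R'\circ R)$ as a subspace of $W_C=V_3\times \ol V_1$, and it only remains to show that $L_C$ is Lagrangian.

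The key step is to translate the transverse composition hypothesis \eqref{eq:strongtrans} into the closedness condition required by Proposition \ref{prop:linred1}. I claim that $L+C=W$ is equivalent to \eqref{eq:strongtrans}. Indeed, $L+C=W$ holds if and only if $L$ surjects onto $W/C\cong V_2$ under $(v_3,v,v',v_1)\mapsto v-v'$. The image of $L$ under this map is $\ran^\star(R')-\ran(R)=\ran(R)+\ran^\star(R')$, since the $\ol V_2$-projection $\pr_{\ol V_2}\gr(R')$ equals $\ran^\star(R')$ and the $V_2$-projection $\pr_{V_2}\gr(R)$ equals $\ran(R)$ by the definitions of kernel, range, and transpose. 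Hence $L+C=W$ if and only if $\ran(R)+\ran^\star(R')=V_2$, which is exactly transverse composition. In particular $L+C$ is then closed, so Proposition \ref{prop:linred1} shows $L_C=\gr(R'\circ R)$ is Lagrangian in $W_C=V_3\times \ol V_1$; that is, $R'\circ R$ is a Lagrangian relation.

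The only genuinely delicate point is the bookkeeping for the various kernels, ranges, and transposes, together with the sign conventions in the product metric; everything else reduces to the general reduction statement already established. The main obstacle is verifying cleanly that the image of the product Lagrangian $L$ in $V_2\cong W/C$ is precisely $\ran(R)+\ran^\star(R')$, since this identification is where the transversality condition \eqref{eq:strongtrans} enters.
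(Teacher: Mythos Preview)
Your proof is correct and follows exactly the same approach as the paper: realize $\gr(R'\circ R)$ as the reduction of the product Lagrangian $\gr(R')\times\gr(R)$ by the diagonal coisotropic $C=V_3\times(V_2)_\Delta\times\ol{V}_1$, translate transverse composition into $L+C=W$ via the projection $W\to W/C\cong V_2$, and apply Proposition \ref{prop:linred1}. The paper's version is more compressed but identical in substance; your added details on $C^\perp$ and on identifying the image of $L$ in $V_2$ are accurate.
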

\begin{proof}
Let $V=(V_3\times\ol{V}_2)\times (V_2\times\ol{V}_1)$ and $C=V_3\times (V_2)_\Delta\times \ol{V_1}$, where 
$(V_2)_\Delta\subseteq V_2\times \ol{V}_2$ is the diagonal subspace. Then 
\[ \gr(R'\circ R)=(\gr(R')\times \gr(R))_C\subseteq
V_C=V_3\times
\ol{V}_1.\]
Since $\ran(R)+\ran^\star (R')$ is the  image of $\gr(R')\times \gr(R)$ under the projection $V\to V/C\cong V_2$, this is equivalent to  
$(\gr(R')\times \gr(R))+C=V$. By Proposition \ref{prop:linred1} this guarantees that $R'\circ R$ is a Lagrangian relation.
\end{proof} 
Taking orthogonals, we see that the transversality \eqref{eq:strongtrans}
implies 
\begin{equation}\label{eq:weaktrans}
 \ker(R')\cap \ker^\star(R)=0,
\end{equation}
which says that whenever $v_1\sim_{R'\circ R} v_3$, then the element $v_2$ with 
$v_1\sim_R v_2$ and $v_2\sim_{R'} v_3$ 
is uniquely determined. We will call the composition $R'\circ R$ \emph{weakly transverse} if the condition \eqref{eq:weaktrans} holds, or equivalently $\ran(R)+\ran^\star (R')$ is dense in $V_2$. 
\begin{definition}
A pair $(V,E)$ consisting of a metrized Hilbert space and a Lagrangian subspace is called a \emph{linear Dirac structure}. A \emph{linear Dirac morphism}  $R\colon (V_1,E_1)\da (V_2,E_2)$ is a Lagrangian relation $R\colon V_1\da V_2$ such that $E_2=R\circ E_1$,
where the composition is weakly transverse (i.e.~ $E_1\cap \ker(R)=0$). If the composition is transverse
(i.e.~$E_1+\ran^\star (R)=V_1$), we will call $R$ a \emph{strong linear Dirac morphism}.
\end{definition}
Here the Lagrangian subspaces $E_i\subseteq V_i$ are regarded as linear relations 
$E_i\colon 0\da V_i$. In the following result, we consider $F_i\subseteq V_i$ as Lagrangian relations 
$F_i\colon V_i\da 0$. 

\begin{proposition}\label{prop:forback}
  Suppose $R\colon (V_1,E_1)\da (V_2,E_2)$ is a linear Dirac morphism, and let $F_2$ be a Lagrangian 
complement  to $E_2$. Then $F_1=F_2\circ R$ is a Lagrangian subspace with $E_1\cap F_1=0$. 
  If $R$ is a strong linear Dirac morphism, then $V_1=E_1\oplus F_1$.
\end{proposition}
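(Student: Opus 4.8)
The plan is to unwind the composition of relations and then verify the three claims in turn, reducing each to properties already available in the excerpt. First I would make the relations explicit. Regarding $F_2\colon V_2\da 0$, one has $v_2\sim_{F_2}0$ iff $v_2\in F_2$, so the composite $F_1=F_2\circ R\colon V_1\da 0$ is the subspace $F_1=\{v_1\in V_1\mid \exists\,v_2\in F_2,\ v_1\sim_R v_2\}$. Likewise $E_2=R\circ E_1=\{v_2\mid \exists\,v_1\in E_1,\ v_1\sim_R v_2\}$, so in particular $E_2\subseteq\ran(R)$. I would also record that $\ran^\star(F_2)=F_2$, which is immediate from the definition of the transpose relation.

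For the Lagrangian property of $F_1$, the key observation is that the composition $F_2\circ R$ is \emph{transverse} in the sense of \eqref{eq:strongtrans}. Indeed, transversality here reads $\ran(R)+\ran^\star(F_2)=V_2$, i.e.\ $\ran(R)+F_2=V_2$; but $E_2\subseteq\ran(R)$ together with the Lagrangian splitting $V_2=E_2\oplus F_2$ forces $\ran(R)+F_2\supseteq E_2+F_2=V_2$. Hence the proposition on transverse compositions applies, and $F_1=F_2\circ R$ is a Lagrangian relation, that is, a Lagrangian subspace of $V_1$. This is the step where the infinite-dimensional subtleties (closedness of the relevant sum) genuinely enter, and they are handled precisely by invoking transversality rather than mere weak transversality.

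To see $E_1\cap F_1=0$, I would take $v_1\in E_1\cap F_1$. Membership in $F_1$ gives $v_2\in F_2$ with $v_1\sim_R v_2$, while membership $v_1\in E_1$ forces $v_2\in R\circ E_1=E_2$. Thus $v_2\in E_2\cap F_2=0$, so $v_1\sim_R 0$, i.e.\ $v_1\in\ker(R)$. The weak transversality of the morphism, $E_1\cap\ker(R)=0$, then yields $v_1=0$.

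Finally, assume $R$ is strong, $E_1+\ran^\star(R)=V_1$; combined with $E_1\cap F_1=0$ it remains only to show $E_1+F_1=V_1$. Given $v_1\in V_1$, write $v_1=e'+w$ with $e'\in E_1$ and $w\in\ran^\star(R)$, and pick $v_2$ with $w\sim_R v_2$. Decompose $v_2=e_2+f_2$ along $V_2=E_2\oplus F_2$, and choose $e_1\in E_1$ with $e_1\sim_R e_2$ (possible since $e_2\in E_2=R\circ E_1$). By linearity of the relation $w-e_1\sim_R v_2-e_2=f_2\in F_2$, so $w-e_1\in F_1$, and therefore $v_1=(e'+e_1)+(w-e_1)\in E_1+F_1$. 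Hence $V_1=E_1\oplus F_1$. I expect the decomposition argument in this last step to require the most careful bookkeeping, although it is purely algebraic once the range/kernel identities are in hand; the genuine conceptual point, as noted above, is the transversality $\ran(R)+F_2=V_2$ that underlies the Lagrangian property of $F_1$.
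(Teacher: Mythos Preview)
Your proof is correct and follows essentially the same route as the paper's: establish transversality of $F_2\circ R$ via $E_2\subseteq\ran(R)$, use $E_2\cap F_2=0$ together with weak transversality for $E_1\cap F_1=0$, and in the strong case decompose an element of $\ran^\star(R)$ along $E_2\oplus F_2$ and pull back. The only cosmetic difference is that the paper omits your preliminary decomposition $v_1=e'+w$ and works directly with $x_1\in\ran^\star(R)$.
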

\begin{proof}
Since $E_2\subseteq \ran(R)$, we have that $\ran(R)+F_2=V_2$. Hence the composition is transverse, and $F_1=F_2\circ R$ is a Lagrangian subspace. Suppose $x_1\in E_1\cap F_1$. Since 
$x_1\in F_1$, there exists $x_2\in F_2$ with $x_1\sim_R x_2$. Since $x_1\in E_1$, this relation implies that $x_2\in E_2$. Hence $x_2=0$. But $x_1\in E_1,\ x_1\sim_R 0$ means $x_1=0$, 
by weak transversality of the composition $R\circ E_1$. 

Suppose now that the composition is transverse, so that $V_1=E_1+\ran^\star
  (R)$. Let $x_1\in \ran^\star (R)$, so that $x_1\sim_R x_2$ for some $x_2\in V_2$. 
     Write $x_2=x_2'+x_2''$
  with $x_2'\in E_2$ and $x_2''\in F_2$. Let $x_1'\in E_1$ be an
  element with $x_1'\sim_R x_2'$, and put $x_1''=x_1-x_1'$. Then
  $x_1''\sim_{R} x_2''$, hence $x_1''\in F_1$. This shows $V_1=E_1\oplus F_1$. 
\end{proof}

\begin{proposition}\label{prop:automatic}
  Suppose $R\colon (V_1,E_1)\da (V_2,E_2)$ and $R'\colon (V_2,E_2)\da
  (V_3,E_3)$ are strong linear Dirac morphisms.  Then 
  the composition $R'\circ R$ is transverse, and defines a 
  strong linear Dirac morphism
  $R'\circ  R\colon (V_1,E_1)\da (V_3,E_3)$.
\end{proposition}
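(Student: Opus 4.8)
The plan is to verify the three conditions that make $R'\circ R$ a strong linear Dirac morphism: first that $R'\circ R$ is a Lagrangian relation, second that $E_3=(R'\circ R)\circ E_1$, and third that this last composition is transverse, i.e.\ $E_1+\ran^\star(R'\circ R)=V_1$. The proposition actually packages two claims — transversality of the composition $R'\circ R$ in the sense of \eqref{eq:strongtrans}, and the strong-morphism property — so I would treat them in that order.

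First I would establish transversality of the composition, $\ran(R)+\ran^\star(R')=V_2$. This is a one-line inclusion: the morphism condition $E_2=R\circ E_1$ forces $E_2\subseteq\ran(R)$ (exactly as used in the proof of Proposition \ref{prop:forback}), while the strong-morphism hypothesis on $R'$ gives $E_2+\ran^\star(R')=V_2$; hence $V_2=E_2+\ran^\star(R')\subseteq\ran(R)+\ran^\star(R')\subseteq V_2$. By the proposition on transverse compositions this already shows $R'\circ R$ is a Lagrangian relation. The identity $E_3=(R'\circ R)\circ E_1$ is then formal, following from associativity of composition of relations together with the two morphism conditions: $(R'\circ R)\circ E_1=R'\circ(R\circ E_1)=R'\circ E_2=E_3$.

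The substantive step is transversality of the composed morphism, $E_1+\ran^\star(R'\circ R)=V_1$, which I would obtain by a diagram chase. Given $v_1\in V_1$, use $V_1=E_1+\ran^\star(R)$ to write $v_1=e_1+u_1$ with $e_1\in E_1$ and $u_1\sim_R u_2$ for some $u_2\in V_2$. Next use $V_2=E_2+\ran^\star(R')$ to write $u_2=e_2+w_2$ with $e_2\in E_2$ and $w_2\in\ran^\star(R')$. Since $e_2\in E_2=R\circ E_1$, there is $e_1'\in E_1$ with $e_1'\sim_R e_2$, so by linearity $u_1-e_1'\sim_R w_2$; and because $w_2\in\ran^\star(R')$ there is $w_3\in V_3$ with $w_2\sim_{R'}w_3$, whence $u_1-e_1'\sim_{R'\circ R}w_3$, i.e.\ $u_1-e_1'\in\ran^\star(R'\circ R)$. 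Therefore $v_1=(e_1+e_1')+(u_1-e_1')\in E_1+\ran^\star(R'\circ R)$, as required.

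The main obstacle is precisely this last chase: the Lagrangian-relation claim reduces to an inclusion, but the transversality of the composed morphism genuinely requires coordinating the two separate transversality decompositions and exploiting that $E_2$ lies in $\ran(R)$ so that the $E_2$-component can be pulled back into $E_1$. Once transversality is secured, the weak transversality needed for the morphism property (equivalently $E_1\cap\ker(R'\circ R)=0$) is automatic from the implication \eqref{eq:strongtrans} $\Rightarrow$ \eqref{eq:weaktrans} recorded in the text, so no further work is needed.
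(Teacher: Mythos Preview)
Your proof is correct. The paper takes a slightly different, more structural route for the final transversality step: rather than chasing elements, it fixes a Lagrangian complement $F_3$ to $E_3$ and applies Proposition~\ref{prop:forback} twice to obtain backward images $F_2=F_3\circ R'$ and $F_1=F_2\circ R$, which are Lagrangian complements to $E_2$ and $E_1$ respectively. Transversality of $R'\circ R$ then follows from $E_2\subseteq\ran(R)$ and $F_2\subseteq\ran^\star(R')$, exactly as in your argument. For the strong-morphism condition, the paper simply notes that by associativity $F_1=F_3\circ(R'\circ R)\subseteq\ran^\star(R'\circ R)$, so $V_1=E_1\oplus F_1\subseteq E_1+\ran^\star(R'\circ R)$. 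Your element chase in the third step is essentially this observation unwound by hand: the decomposition $u_2=e_2+w_2$ and the pullback of $e_2$ to $e_1'$ are precisely what the complement $F_2$ and the map $F_2\to F_1$ encode. The paper's version is shorter and highlights the role of Proposition~\ref{prop:forback}; yours is more self-contained and makes the mechanism explicit.
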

\begin{proof}
  Choose a Lagrangian complement $F_3$ to $E_3\subseteq V_3$. Then
  $F_2=F_3\circ R'\subseteq V_2$ is a Lagrangian complement to $E_2$,
  and $F_1=F_2\circ R$ is a Lagrangian complement to $E_1$. We have
  $E_2\subseteq \ran(R)$ and $F_2\subseteq \ran^\star (R')$, hence
  $\ran(R)+\ran^\star (R')=V_2$, proving transversality of the
  composition $R'\circ R$. Similarly, $F_1=F_3\circ (R'\circ R)$ shows
  that $F_1\subseteq \ran^\star(R'\circ R)$. Hence $E_1+\ran^\star(R'\circ
  R)=V_1$.
\end{proof}

%

\subsection{Courant algebroids}
The usual definition of a Courant algebroid \cite{liu:ma,roy:co} works
equally well for infinite dimensional manifolds. In the remainder of this
section we shall use the terms ``manifold'', ``vector bundle'', ``Lie
group'', etc. to refer to Hilbert manifold, Hilbert vector bundle, Hilbert
Lie group, and so on. A \emph{metrized vector bundle} is a Hilbert vector bundle with a fiberwise (pseudo-Riemannian) metric. 

A \emph{Courant algebroid} is a metrized vector bundle
$(\AA,\l\cdot,\cdot\r)$ over a manifold $Q$, equipped with a smooth bundle map
$\a\colon \AA\to TQ$ called the \emph{anchor}, and a bilinear
\emph{Courant bracket} $\Cour{\cdot,\cdot}\colon
\Gamma(\AA)\times\Gamma(\AA)\to \Gamma(\AA)$, such that the following
axioms are satisfied, for all smooth sections $\sig_1,\sig_2,\sig_3$
of $\AA$:
\begin{equation}
\begin{aligned}\label{courdef}
  \Cour{\sig_1,\Cour{\sig_2,\sig_3}}&=
     \Cour{\Cour{\sig_1,\sig_2},\sig_3}+\Cour{\sig_2,\Cour{\sig_1,\sig_3}},\\
  \a(\sig_1) \l\sig_2,\sig_3\r&=
     \l \Cour{\sig_1,\sig_2},\sig_3\r+\l \sig_2,\Cour{\sig_1,\sig_3}\r,\\
  \a^\star\d\l\sig_1,\sig_2\r&=
     \Cour{\sig_1,\sig_2}+\Cour{\sig_2,\sig_1}.
\end{aligned}
\end{equation}
Here $\a^\star\colon T^\star Q\to \AA$ is the dual anchor composed with the isomorphism $\AA^{*}\cong \AA$ given by the metric. 

These axioms imply the following properties \cite{uch:rem}, for all $f\in C^\infty(Q)$: 
%
%
\begin{equation*}
\begin{aligned}
\Cour{\sig_1,f\sig_2}&=f\Cour{\sig_1,\sig_2}+(\a(\sig_1)f)\sigma_2,\\
\a(\Cour{\sig_1,\sig_2})&=[\a(\sig_1),\a(\sig_2)].
\end{aligned}
\end{equation*}
%
%
A \emph{Dirac structure} $(\AA,E)$ on $Q$ is a Courant algebroid together with a Lagrangian subbundle $E\subseteq \AA$ whose space of sections is closed under the bracket. If the Courant algebroid 
$\AA$ is fixed, we refer to $E$ itself as the Dirac structure. For any Dirac structure, the Courant bracket restricts to a Lie bracket on $\Gamma(E)$, thus $E$ is a Lie algebroid.  A connected submanifold $\O\subseteq Q$ is called a \emph{leaf} of $E$ if $\a(E|_\O)=T\O$, and is maximal with this property.  If $\dim Q<\infty$,  the Stefan-Sussmann theorem \cite{bal:not,ste:int,sus:orb}
asserts that $Q$ acquires a singular foliation by leaves. In  infinite dimensions, there are similar 
results due to  Chillingworth-Stefan \cite{chi:int} and Pelletier \cite{pel:int} (the latter reference discusses foliations defined by Banach-Lie algebroids).  In our main applications the foliation will be explicitly given as the orbits of a Lie group action. 

\begin{example}\mbox{}

\begin{enumerate}
\item 
Suppose $Q$ is a manifold with a closed 3-form $\eta\in\Omega^3(Q)$. Then
the direct sum $TQ\oplus T^\star Q$ carries the structure of a Courant
algebroid, with metric $\l v_1+\mu_1,v_2+\mu_2\r=\l\mu_1,v_2\r+\l\mu_2,v_1\r$, 
with anchor the projection to the first summand, and with the Courant 
bracket
\[
  \Cour{v_1+\mu_1,v_2+\mu_2}=[v_1,v_2]+\L_{v_1}\mu_2-\iota_{v_2}\d\mu_1
  +\iota_{v_1}\iota_{v_2}\eta.
\]
We will denote this Courant algebroid by $\T Q_\eta$. If $\eta=0$, it
is called the \emph{standard Courant algebroid} and is denoted $\T
Q$. 
Suppose $E\subseteq \T Q_\eta$ is a Dirac structure.  If $i_\O\colon\,\O\to Q$ is 
the inclusion of a leaf of $E$, then there is a 2-form $\om_\O\in
\Omega^2(\O)$, uniquely defined by the property
$\om_\O(v_1,v_2)=\l\alpha_1,v_2\r$ for all $v_i\in T_m\O$, where
$\alpha_1\in T^\star Q$ is chosen so that $v_1+\alpha_1\in E_m$. It follows that 
the 2-form satisfies $\d\om_\O=-i_\O^*\eta$.
\item \label{ex:coiso}
Suppose $\dd$ is a Lie algebra with an
  invariant metric. Given a $\dd$-action on $Q$ such that the stabilizer
  algebras $\dd_m=\{\xi\in\dd|\ \xi_Q(m)=0\}$ are coisotropic, the
  product
  \[ \AA=Q\times\dd\]
  becomes a Courant algebroid, with anchor the 
  action map $Q\times \dd\to TQ$, and with Courant bracket
  extending the Lie bracket on constant sections (see
  \cite{lib:cou}). This is called an \emph{action Courant algebroid}. For any Lagrangian Lie subalgebra $\s\subset \dd$, the subbundle $E=Q\times \s$ is a Dirac structure in $\AA$. 
\end{enumerate}
\end{example}

\subsection{Weak Poisson structures}\label{subsec:weakpoisson}
  A Lagrangian subbundle $E\subseteq \T Q$ with the property $E\oplus
  TQ=\T Q$ amounts to a continuous skew-symmetric bilinear form $\pi$
  on $T^\star Q$, such that $E=\gr(\pi^\sharp)$ is the graph of
  the associated map. If $E$ is a Dirac structure with this property,
  we will call $\pi$ (or $E$ itself) a \emph{Poisson structure} on
  $Q$. In particular, $\pi$ determines a bracket on $C^\infty(Q)$ in
  the usual way. For general Banach (as opposed to Hilbert) manifolds, the definition is more
  involved, see Odzijewicz-Ratiu \cite{odz:ban}.  Given a leaf $\O$  of a Poisson structure, the 2-form $\om$  on that leaf is symplectic, in the strong sense that the bundle map
  $\Omega^\flat\colon TQ\to T^\star Q$ is invertible. 
  
  A Dirac structure  $E\subseteq \T Q$ satisfying the weaker condition $E\cap TQ=0$ will be
  called a \emph{weak Poisson structure}; this may be regarded as a family of skew-adjoint \emph{unbounded} operators. The resulting  2-forms $\om$ on
  leaves are only weakly symplectic, in the sense that $\Omega^\flat$ is
  injective.  In the finite-dimensional setting, the notions
  coincide.  See Posthuma \cite[Chapter 4.1]{po:qu} for another definition of weak Poisson structure. 
 Given a weak Poisson structure $E$, let $C^\infty_E(Q)$  be the space of smooth functions $f$ 
 for which there exists a vector field $v_f$ with $v_f+\d f\in \Gamma(E)$. Since $E\cap TQ=0$, 
 the vector field $v_f$ is uniquely determined.  The elements of $C^\infty_E(Q)$ are called \emph{admissible} \cite{cou:di} or \emph{Hamiltonian} \cite{ab:ma} functions, and $v_f$ the corresponding \emph{Hamiltonian vector field}. The space of Hamiltonian functions is a Poisson algebra for the bracket 
\[ \{f_1,f_2\}=v_{f_1}(f_2).\]   
 %

\subsection{Morphisms}\label{subsec:morphisms}
Morphisms of Courant algebroids and Dirac structures are defined as Lagrangian correspondences. 

For any Courant algebroid $\AA$, denote by $\ol{\AA}$ the Courant algebroid which is obtained from from $\AA$ by reversing
the sign of the metric.
A \emph{morphism of Courant algebroids}
\[ R\colon \AA_1\da \AA_2\]
is a smooth map $\Phi\colon Q_1\to Q_2$ of the base manifolds,
together with a Lagrangian subbundle $\gr(R)\subseteq \AA_2\times\ol{\AA}_1$ along
the graph $\gr(\Phi)\subseteq Q_2\times Q_1$, satisfying the following integrability 
condition:  If two sections of $\AA_2\times\ol{\AA}_1$ restrict to sections 
of $\gr(R)$, then so does their Courant bracket. We will depict Courant morphisms as 
follows
\begin{equation}\label{eq:R}  \xymatrix{ \AA_1 \ar@{-->}[r]^{R}\ar[d] & \AA_2 \ar[d] \\ Q_1\ar[r]_\Phi & Q_2
}\end{equation}
Composition of Courant morphisms
is defined as a composition of Lagrangian relations, assuming that the
composition is transverse. As shown in \cite{lib:dir}, the integrability condition is preserved under composition.

For $x_i\in \AA_i$, we will write $x_1\sim_R x_2$ if $(x_2,x_1)\in \gr(R)$.
Similarly, if $\sig_i\in\Gamma(\AA_i)$ are sections we write
$\sig_1\sim_R \sig_2$ if $(\sig_2,\sig_1)$ restricts to a section of
$\gr(R)$. Consider the dual of the tangent map $T\Phi\colon TQ_1\to TQ_2$ 
as a relation 
\begin{equation}\label{eq:R2}  \xymatrix{ T^\star Q_1 \ar@{-->}[r]^{T^\star\Phi}\ar[d] & T^\star Q_2 \ar[d] \\ Q_1\ar[r]_\Phi & Q_2
}\end{equation}
That is, $\mu_1\sim_{T^\star\Phi}\mu_2$ 
for $\mu_i\in T^\star_{m_i}Q_i$ means $m_2=\Phi(m_1)$ and 
$\mu_1=(T_{m_1}\Phi)^\star \mu_2$. 

\begin{lemma} \label{lem:27} 
Let $R\colon \AA_1\da \AA_2$ be a Courant morphism with base map 
$\Phi\colon Q_1\to Q_2$. Then 
\[ \a_2^\star \circ T^*\Phi=R\circ \a_1^\star.\]
That is, the dual of $\a=(\a_2,\a_1)$ restricts to a bundle map 
$\a^\star\colon \gr(T^\star \Phi)\to \gr(R)$. 
\end{lemma}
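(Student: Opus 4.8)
The plan is to work inside the product Courant algebroid $\AA_2\times\ol{\AA}_1$ over $Q_2\times Q_1$, whose anchor is $\a=(\a_2,\a_1)$ and whose metric is the sum of the metric on $\AA_2$ and the negative of the metric on $\AA_1$. By the definition of a Courant morphism, $\gr(R)$ is a Lagrangian subbundle along $S=\gr(\Phi)$ whose sections are closed under the Courant bracket. The key tool is the third axiom in \eqref{courdef}, which for this product algebroid reads $\a^\star\d\l\sig_1,\sig_2\r=\Cour{\sig_1,\sig_2}+\Cour{\sig_2,\sig_1}$, relating the dual anchor to the symmetric part of the bracket. The whole statement is equivalent to the inclusion $\a^\star(\on{ann}(T_nS))\subseteq\gr(R)_n$, and this is what I would establish.

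First I would note that if $\sig_1,\sig_2$ are sections of $\AA_2\times\ol{\AA}_1$ restricting to sections of $\gr(R)$ along $S$, then $\l\sig_1,\sig_2\r$ vanishes on $S$ because $\gr(R)$ is isotropic; hence $\d\l\sig_1,\sig_2\r|_n\in\on{ann}(T_nS)$ for every $n\in S$. On the other hand, the integrability condition guarantees that both $\Cour{\sig_1,\sig_2}$ and $\Cour{\sig_2,\sig_1}$ restrict to sections of $\gr(R)$, so by the third axiom their sum $\a^\star\d\l\sig_1,\sig_2\r$ does as well; evaluating at $n$ gives $\a^\star\d\l\sig_1,\sig_2\r|_n\in\gr(R)_n$. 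Thus $\a^\star$ carries every conormal covector of the special form $\d\l\sig_1,\sig_2\r|_n$ into $\gr(R)_n$.

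The main obstacle is to upgrade this to all of $\on{ann}(T_nS)$, that is, to show that these special differentials exhaust the conormal bundle. Here I would use that $S=\gr(\Phi)$ is a split submanifold (diffeomorphic to $Q_1$) and that the fibre metric is non-degenerate. Fixing one pair $\sig_1^0,\sig_2^0$ restricting to $\gr(R)$ with $\sig_2^0(n)\neq0$ (possible since $\gr(R)$ has positive rank), and given an arbitrary $\nu\in\on{ann}(T_nS)$, I would choose a function $h$ vanishing on $S$ with $\d h|_n=\nu$ and a section $\tau$ with $\l\tau(n),\sig_2^0(n)\r=1$. Replacing $\sig_1^0$ by $\sig_1^0+h\tau$, which still restricts to $\gr(R)$ because $h|_S=0$, changes $\d\l\sig_1,\sig_2^0\r|_n$ by exactly $\l\tau(n),\sig_2^0(n)\r\,\d h|_n=\nu$. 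Since $\nu$ was arbitrary, every element of $\on{ann}(T_nS)$ is realized, and therefore $\a^\star(\on{ann}(T_nS))\subseteq\gr(R)_n$.

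It remains to translate this into the stated form. The conormal of $S=\gr(\Phi)$ is $\on{ann}(T_nS)=\{(\mu_2,\nu_1)\mid \nu_1=-(T\Phi)^\star\mu_2\}$, while the dual anchor of the product algebroid carries the sign of $\ol{\AA}_1$, namely $\a^\star(\mu_2,\nu_1)=(\a_2^\star\mu_2,-\a_1^\star\nu_1)$. For $(\mu_2,\nu_1)\in\on{ann}(T_nS)$ the two sign flips cancel and one obtains $(\a_2^\star\mu_2,\a_1^\star(T\Phi)^\star\mu_2)\in\gr(R)_n$; writing $\mu_1=(T\Phi)^\star\mu_2$ this says precisely that $\a_1^\star\mu_1\sim_R\a_2^\star\mu_2$ whenever $\mu_1\sim_{T^\star\Phi}\mu_2$. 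This is exactly the asserted bundle map $\a^\star\colon\gr(T^\star\Phi)\to\gr(R)$, $(\mu_2,\mu_1)\mapsto(\a_2^\star\mu_2,\a_1^\star\mu_1)$, and it is equivalent to the relation identity $\a_2^\star\circ T^\star\Phi=R\circ\a_1^\star$.
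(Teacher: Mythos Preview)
Your proof is correct, but it takes a considerably more roundabout path than the paper's.

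The paper's argument is purely linear-algebraic and does not touch the bracket or integrability at all. It uses only two facts: that $\gr(R)$ is Lagrangian, and the anchor compatibility $\a(\gr(R))\subseteq\gr(T\Phi)$ (equivalently $\a_2\circ R=T\Phi\circ\a_1$ as relations). Given $\mu_1\sim_{T^\star\Phi}\mu_2$, one simply computes, for any $(x_2,x_1)\in\gr(R)$,
\[
\l(\a_2^\star\mu_2,\a_1^\star\mu_1),(x_2,x_1)\r
=\l\mu_2,\a_2(x_2)\r-\l\mu_1,\a_1(x_1)\r
=\l\mu_2,\a_2(x_2)-T\Phi(\a_1(x_1))\r=0,
\]
so $(\a_2^\star\mu_2,\a_1^\star\mu_1)\in\gr(R)^\perp=\gr(R)$. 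That is the whole proof: three lines, no Courant axioms beyond the definition of $\a^\star$.

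Your route instead invokes the third Courant axiom and the integrability of $\gr(R)$ to realise conormal covectors as differentials $\d\l\sig_1,\sig_2\r|_n$, and then argues that these exhaust $\on{ann}(T_nS)$ via a perturbation $\sig_1^0\mapsto\sig_1^0+h\tau$. This works, but it is effectively using the bracket to recover a statement that follows directly from orthogonality. (Indeed, the anchor compatibility you would need for the short proof is itself an easy consequence of integrability via the Leibniz rule, so in a sense you are proving it implicitly---just in a less transparent way.) The paper's approach has the advantage of making clear that the lemma is a statement about Lagrangian relations compatible with the anchors, with no genuine differential content; yours has the minor virtue of not presupposing anchor compatibility as known.
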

\begin{proof}
The assertion follows by dualizing the property $T\Phi\circ \a_2=\a_1\circ R$, using that $R^\star=R$
under the identification $\AA_i^\star=\AA_i$.  In detail, let $\mu_i\in T^\star_{m_i}Q_i$ with 
$\mu_1\sim_{T^\star\Phi}\mu_2$. For all $x_i\in \AA_{m_i}$ with   $x_1\sim_R x_2$, we have that
\[ \l  \a_1^\star(\mu_1),\ x_1\r
=\l \mu_2,\ T\Phi(\a_1(x_1))\r=\l\mu_2,\ \a_2(x_2)\r=\l \a_2^\star(\mu_2),\ x_2\r,\]
that is,  $\l (\a_2^\star(\mu_2),\ \a_1^\star(\mu_1)),\ (x_2,x_1)\r=0$. This shows 
$(\a_2^\star(\mu_2),\ \a_1^\star(\mu_1))\in \gr(R)^\perp=\gr(R)$, as desired. 
\end{proof}

Let $(\AA_i,E_i),\ i=1,2$ be Dirac structures on $Q_i$. We say that \eqref{eq:R} defines a
\emph{Dirac morphism} (or \emph{morphism of Manin pairs}) \cite{bur:cou}
\[ R\colon (\AA_1,E_1)\da (\AA_2,E_2)\]
if for all $m\in Q$, every $x_2\in (E_2)_{\Phi(m)}$ is $R$-related to a unique element 
$x_1\in (E_1)_m$. Equivalently, 
\[ \Phi^*E_2=R\circ E_1\]
where the composition is weakly transverse (when the composition is transverse, the Dirac morphism is called \emph{strong}). The resulting bundle map $\Phi^*E_2\to E_1$ defines a \emph{comorphism} of Lie algebroids $R\colon E_1\da E_2$: It is compatible with the anchor, and the map on sections 
$\Phi^*\colon \Gamma(E_2)\to \Gamma(E_1)$ preserves Lie brackets. 
%
%


\begin{definition}[\cite{bur:cou}] A \emph{Hamiltonian space} for a Dirac structure 
  $(\AA,E)$ on $Q$ is a manifold $M$ with a Dirac morphism
  \[ R\colon (\T M,T M)\da (\AA,E).\]
The base map $\Phi\colon M\to Q$ is called the \emph{moment  map}. 
\end{definition}
Given a Hamiltonian space, the resulting Lie algebroid comorphism $TM\da E$ defines
an \emph{action} of the Lie algebroid $E$ on the manifold $M$ \cite{bur:cou}. 
In particular, if $E$ is the action Lie algebroid for a $\g$-action on $Q$, then one
obtains a $\g$-action on $M$.

\begin{example}
Let  $Q$ be a manifold with a weak Poisson structure $(\T Q,E)$, thus 
 $E\cap TQ=0$, and let $M$ be a Hamiltonian space, defined by a Dirac morphism $R\colon (\T M,TM)\da (\T Q,E)$. By Proposition \ref{prop:forback},  the backward  image $F=TQ\circ R$ is a Lagrangian subbundle with  $TM\cap F=0$. We conclude that $F$ is again a weak Poisson structure. The map 
 $\Phi$ is anti-Poisson for these Poisson structures. 
\end{example}

\subsection{Exact Courant algebroids}
A Courant algebroid $\AA$ with base $Q$ is called \emph{exact} \cite{sev:let} if the 
following sequence is exact:
\[
\xymatrix@!@C=1.2em{0\ar[r]& T^\star Q\ar[r]^-{\a^{\star}} &  \AA\ar[r]^-{\a} & TQ\ar[r] & 0}.
\]
Equivalently, $\a^\star$ embeds $T^\star Q$ as a Lagrangian subbundle, defining a Dirac structure 
$(\AA,\ran(\a^\star))$.  
Using the Hilbert structure, the Lagrangian subbundle $\ran(\a^\star)$ admits a closed 
complement, and by Proposition \ref{prop:linred} one can choose this
complement to be Lagrangian. This determines a splitting
\[ j\colon TQ\to \AA\] such that $j(TQ)$ is a Lagrangian complement to
$\a^\star(T^\star Q)$.  We will refer to $j$ as an \emph{isotropic
  splitting}.  As observed by \v{S}evera \cite{sev:let}, the choice of
an isotropic splitting identifies $\AA \cong \T
Q_\eta$, where the closed 3-form $\eta\in\Omega^3(Q)$ is given by the formula
\begin{equation}\label{eq:etadef}
  \iota(v_1)\iota(v_2)\iota(v_3)\eta=\l j(v_1),\Cour{j(v_2),j(v_3)}\r.\end{equation}
The set of isotropic splittings is an affine space modeled on 2-forms:
Given $\varpi\in \Omega^2(Q)$, one obtains a new isotropic
splitting by the translation
\begin{equation}\label{eq:jprime}
  j'(v)=j(v)+\a^\star (\iota_v\varpi),\end{equation}
with the corresponding 3-form $\eta'=\eta+\d\varpi$.

A Courant morphism $R\colon \AA_1\da \AA_2$ between exact Courant algebroids will be called \emph{exact} if 
the sequence 
\[ 
\xymatrix@!@C=1.2em{0\ar[r]& \gr(T^\star \Phi)\ar[r]^-{\a^{\star}} &  \gr(R) \ar[r]^-{\a} & \gr(T \Phi)\ar[r] & 0}.
\]
is exact, where $\a=(\a_2,\a_1)$. It turns out that it is enough to know exactness at $\gr(T\Phi)$:
\begin{proposition}\label{prop:alleq}
The following conditions are equivalent:
\begin{enumerate}
\item $R$ is exact.
\item $R$ is full \cite{lib:qua}, that is, $\a|_{\gr(R)}\colon \gr(R)\to \gr(T\Phi)$ is surjective.
\item $\ran^\star(R)+\ran(\a_1^\star)=\AA_1$.
\end{enumerate}
Furthermore, in this case $R$ defines a strong Dirac morphism 
\begin{equation}\label{eq:dimo}
 R\colon (\AA_1, \ran(\a_1^\star))\da (\AA_2, \ran(\a_2^\star)).
 \end{equation}
\end{proposition}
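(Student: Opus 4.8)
The plan is to prove (a) $\Leftrightarrow$ (b) and (b) $\Leftrightarrow$ (c), and then to read off the strong Dirac morphism. First I would record the parts of the defining sequence that are automatic. Exactness at $\gr(T^\star\Phi)$ holds for free: if $(\mu_2,\mu_1)\in\gr(T^\star\Phi)$ has $\a^\star(\mu_2,\mu_1)=0$, then $\a_2^\star\mu_2=0$ forces $\mu_2=0$ by injectivity of $\a_2^\star$, whence $\mu_1=(T\Phi)^\star\mu_2=0$. Likewise $\ran(\a^\star)\subseteq\ker(\a|_{\gr(R)})$ is automatic from the Courant identity $\a_i\circ\a_i^\star=0$. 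Thus condition (a) amounts to surjectivity of $\a|_{\gr(R)}$ (which is exactly (b)) together with the reverse inclusion $\ker(\a|_{\gr(R)})\subseteq\ran(\a^\star)$, so (a) $\Rightarrow$ (b) is trivial, and the whole content of (b) $\Rightarrow$ (a) is to derive this reverse inclusion from fullness.

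For (b) $\Leftrightarrow$ (c) I would use that the projection $\gr(T\Phi)\to TQ_1$ is an isomorphism, together with the anchor compatibility of Courant morphisms ($\a_2 x_2=T\Phi(\a_1 x_1)$ whenever $x_1\sim_R x_2$, the property dualized in Lemma \ref{lem:27}), to rephrase fullness as $\a_1(\ran^\star(R))=TQ_1$. Applying the surjection $\a_1$ to the sum in (c) and using $\a_1\circ\a_1^\star=0$ yields (c) $\Rightarrow$ (b); conversely, given $\a_1(\ran^\star(R))=TQ_1$ and any $y\in\AA_1$, I would pick $x_1\in\ran^\star(R)$ with $\a_1 x_1=\a_1 y$, so that $y-x_1\in\ker(\a_1)=\ran(\a_1^\star)$, which proves (c).

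The crux is (b) $\Rightarrow$ (a), and this is where the Lagrangian hypothesis does the work. Take $(x_2,x_1)\in\gr(R)$ with $\a_2 x_2=0=\a_1 x_1$; by exactness of the $\AA_i$ and injectivity of $\a_i^\star$, write $x_i=\a_i^\star\mu_i$ for unique $\mu_i$. Since $\gr(R)\subseteq\AA_2\times\ol{\AA}_1$ is Lagrangian, any $(y_2,y_1)\in\gr(R)$ obeys $\l x_2,y_2\r=\l x_1,y_1\r$, which I would rewrite via the dual anchors as $\l\mu_2,\a_2 y_2\r=\l\mu_1,\a_1 y_1\r$. Fullness makes the pairs $(\a_2 y_2,\a_1 y_1)$ exhaust $\gr(T\Phi)$, so $\l\mu_2,T\Phi(w_1)\r=\l\mu_1,w_1\r$ for all $w_1$, forcing $\mu_1=(T\Phi)^\star\mu_2$. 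Then $(\mu_2,\mu_1)\in\gr(T^\star\Phi)$ and $(x_2,x_1)=\a^\star(\mu_2,\mu_1)\in\ran(\a^\star)$, giving the missing inclusion. I expect this pairing argument to be the main obstacle; everything else is formal manipulation of the exact sequences.

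For the final assertion \eqref{eq:dimo}, recall that $E_i=\ran(\a_i^\star)$ is the canonical Dirac structure of the exact algebroid $\AA_i$. Transversality of the composition $R\circ E_1$ is the statement $E_1+\ran^\star(R)=\AA_1$, which is exactly (c); so it remains to verify $R\circ E_1=\Phi^*E_2$. The inclusion $\subseteq$ comes from anchor compatibility ($x_1\in\ker(\a_1)$ and $x_1\sim_R x_2$ give $\a_2 x_2=0$, i.e.\ $x_2\in\ran(\a_2^\star)$), and the inclusion $\supseteq$ from Lemma \ref{lem:27}: for $x_2=\a_2^\star\mu_2\in E_2$, set $\mu_1=(T\Phi)^\star\mu_2$ and $x_1=\a_1^\star\mu_1\in E_1$, so that $(x_2,x_1)=\a^\star(\mu_2,\mu_1)\in\gr(R)$ and hence $x_1\sim_R x_2$. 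Since (c) supplies transversality, this exhibits $R$ as a strong Dirac morphism $(\AA_1,\ran(\a_1^\star))\da(\AA_2,\ran(\a_2^\star))$.
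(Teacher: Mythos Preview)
Your proof is correct and follows essentially the same approach as the paper: the paper also reduces (a) to surjectivity of $\a|_{\gr(R)}$ plus the inclusion $\ker(\a|_{\gr(R)})\subseteq\a^\star(\gr(T^\star\Phi))$, proves the latter by the same isotropy/pairing argument using fullness, and rephrases (b) as $\a_1(\ran^\star(R))=TQ_1$ to obtain the equivalence with (c). Your treatment of the final strong Dirac morphism is slightly more explicit than the paper's (which simply cites Lemma~\ref{lem:27}), but the content is identical.
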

\begin{proof} The implication (a) $\Rightarrow$ (b) is trivial. Suppose condition (b) holds. Then the map $\a^*|_{\gr(T^\star \Phi)}\colon \gr(T^\star \Phi)\to \gr(R)$ is injective, with image 
a closed subbundle of $\ker(\a|_{\gr(R)})\subseteq \gr(R)$. Since the $\AA_i$ are exact Courant algebroids, any $x\in \ker(\a|_{\gr(R)})$ is of the form $x=\a^*\mu$ for some 
$\mu\in T^\star(Q_2\times Q_1)$. For all $y\in \gr(R)$, we have $0=\l \a^*\mu,y\r=\l \mu,\a(y)\r$. Using again 
that $\a|_{\gr(R)}\colon \gr(R)\to \gr(T\Phi)$ is surjective, it follows that $\mu\in \gr(T^\star\Phi)$. 
This proves exactness at $\gr(R)$, and hence (a). On the other hand, 
condition (b) amounts to the statement that 
$\a_1(\ran^\star(R))=TQ_1$. Since $\a_1$ is the projection along $\ran(\a_1^\star)$, this is 
equivalent to (c). The final statement follows from Lemma \ref{lem:27}. 
\end{proof}
As a consequence of the fact that \eqref{eq:dimo} is strongly Dirac,  exact Courant morphisms can always be composed (see Proposition \ref{prop:automatic}). 
Another consequence is that one can `pull back' isotropic splittings:
\begin{proposition}\label{prop:pullsplit}
Let $R\colon \AA_1\da \AA_2$ be an exact Courant morphism, and $j_2\colon TQ_2\to \AA_2$ an isotropic splitting, with corresponding 3-form $\eta_2$. Then there is a unique isotropic splitting $j_1\colon TQ_1\to \AA_1$ such that 
\[ R\circ j_1=j_2\circ T\Phi.\]
The corresponding 3-form is $\eta_1=\Phi^*\eta_2$. 
\end{proposition}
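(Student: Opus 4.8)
The plan is to construct $j_1$ as a backward image of $j_2(TQ_2)$ under $R$, read off the intertwining relation from anchor-compatibility, and deduce the $3$-form identity from the bracket- and metric-compatibility built into the notion of Courant morphism. By Proposition \ref{prop:alleq}, since $R$ is exact it is a \emph{strong} Dirac morphism $(\AA_1,\ran(\a_1^\star))\da(\AA_2,\ran(\a_2^\star))$. The subbundle $F_2:=j_2(TQ_2)$ is, by definition of an isotropic splitting, a Lagrangian complement to $\ran(\a_2^\star)$. Applying the vector-bundle version of the backward-image construction of Proposition \ref{prop:forback} with $E_i=\ran(\a_i^\star)$ and this $F_2$, I obtain $F_1:=F_2\circ R$, a Lagrangian subbundle with $\AA_1=\ran(\a_1^\star)\oplus F_1$ (the direct sum because the morphism is strong; the same transversality makes $F_1$ a closed smooth subbundle, as in the Remark following Proposition \ref{prop:linred1}). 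Hence $\a_1$ restricts to an isomorphism $F_1\xra{\sim}TQ_1$, and I set $j_1:=(\a_1|_{F_1})^{-1}$, an isotropic splitting with $j_1(TQ_1)=F_1$.

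Next I would verify $R\circ j_1=j_2\circ T\Phi$. By construction, for $v\in T_{m_1}Q_1$ the element $j_1(v)\in F_1=F_2\circ R$ is $R$-related to some $x_2\in F_2=j_2(TQ_2)$, say $x_2=j_2(w)$. Anchor-compatibility of a Courant morphism (as used in the proof of Lemma \ref{lem:27}, namely $x_1\sim_R x_2\Rightarrow \a_2(x_2)=T\Phi(\a_1(x_1))$) gives $w=\a_2(j_2(w))=T\Phi(\a_1(j_1(v)))=T\Phi(v)$, so $j_1(v)\sim_R j_2(T\Phi(v))$, which is the asserted identity. For uniqueness, if $j_1'$ is another isotropic splitting with the same property, then for each $v$ the difference $j_1(v)-j_1'(v)$ lies in $\ker(\a_1)=\ran(\a_1^\star)$ (both split $\a_1$) and, by linearity of the relation, is $R$-related to $0$; thus it lies in $\ran(\a_1^\star)\cap\ker(R)$, which vanishes by weak transversality of the Dirac morphism. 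Hence $j_1=j_1'$.

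Finally, for the $3$-form I would use that $R$, being a Courant morphism, intertwines both structures on $R$-related sections: if $\sig_i\sim_R\tau_i$ then $\l\sig_1,\sig_2\r_1=\l\tau_1,\tau_2\r_2$ (Lagrangianity of $\gr(R)$ in $\AA_2\times\ol\AA_1$) and $\Cour{\sig_1,\sig_2}\sim_R\Cour{\tau_1,\tau_2}$ (the integrability condition). Choosing vector fields $u_1,u_2,u_3$ on $Q_1$ that are $\Phi$-related to vector fields $\ti u_1,\ti u_2,\ti u_3$ on $Q_2$, the pointwise relation already proved upgrades to $j_1(u_i)\sim_R j_2(\ti u_i)$ at the level of sections, whence
\[ \l j_1(u_1),\Cour{j_1(u_2),j_1(u_3)}\r_1=\l j_2(\ti u_1),\Cour{j_2(\ti u_2),j_2(\ti u_3)}\r_2. \]
By \eqref{eq:etadef} the left side is $\eta_1(u_1,u_2,u_3)$ while the right side is $\eta_2(\ti u_1,\ti u_2,\ti u_3)=(\Phi^*\eta_2)(u_1,u_2,u_3)$, giving $\eta_1=\Phi^*\eta_2$ on such triples. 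The main obstacle is precisely this last point: for a general base map $\Phi$ (neither an immersion nor a submersion) one cannot extend arbitrary tangent vectors to globally $\Phi$-related vector fields, so I would reduce the tensorial identity to a pointwise statement that the $1$-jets of the sections actually see—e.g. by passing to charts that put $\Phi$ into a local normal form—before invoking bracket-compatibility. Everything else is a direct transcription of the finite-dimensional argument, the only genuinely infinite-dimensional input being the existence of the closed Lagrangian complement underlying $j_1$, supplied by Propositions \ref{prop:linred1} and \ref{prop:forback}.
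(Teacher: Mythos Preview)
Your construction of $j_1$ as the backward image $F_1=F_2\circ R$, the verification of $R\circ j_1=j_2\circ T\Phi$ via anchor-compatibility, and the uniqueness argument using $\ran(\a_1^\star)\cap\ker(R)=0$ are exactly what the paper does.

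For the $3$-form identity, the obstacle you flag is real, and the paper resolves it by precisely the kind of move you gesture at but do not carry out. Rather than seeking $\Phi$-related vector fields on $Q_1$ and $Q_2$ separately, the paper works on the product: take the splitting $j=j_2\times j_1$ of $\AA_2\times\ol{\AA}_1$, with associated $3$-form $\eta=\pr_2^*\eta_2-\pr_1^*\eta_1$ on $Q_2\times Q_1$. Now choose vector fields $v,v',v''$ on $Q_2\times Q_1$ that are \emph{tangent to} $\gr(\Phi)$. Such vector fields always exist and span $T\gr(\Phi)$, since $\gr(\Phi)\cong Q_1$ is a closed embedded submanifold; no hypothesis on the rank of $\Phi$ is needed. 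Then $j(v),j(v'),j(v'')$ restrict to sections of $\gr(R)$, so by integrability $\Cour{j(v'),j(v'')}$ does as well, and by isotropy of $\gr(R)$ the function $\eta(v,v',v'')=\l j(v),\Cour{j(v'),j(v'')}\r$ vanishes along $\gr(\Phi)$. But the pullback of $\eta$ to $\gr(\Phi)\cong Q_1$ is $\Phi^*\eta_2-\eta_1$, which gives the claim. This sidesteps entirely the problem of producing $\Phi$-related fields on the separate factors.
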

\begin{proof}
The subbundle $F_2=j_2(TQ_2)$ is a Lagrangian complement to $\ran(\a_2^\star)$. Since \eqref{eq:dimo} is strongly Dirac, Proposition \ref{prop:forback} shows that its 
backward image $F_1=\Phi^*F_2\circ R$ is a Lagrangian complement to $\ran(\a_1^\star)$. 
Hence it is of the form $F_1=j_1(TQ_1)$ for an isotropic splitting $j_1$. By construction, this splitting 
satisfies $R\circ j_1=j_2\circ T\Phi$. Uniqueness of the isotropic splitting $j_1$ with this property 
follows from $\ker(\a_1)\cap \ker(R)=0$. 
Let $\eta_1$ be the corresponding 3-form. Then $\eta=\pr_2^*\eta_2-\pr_1^*\eta_1\in \Omega^3(Q_2\times Q_1)$ is the 3-form for the splitting $j=j_2\times j_1$ of $\AA_2\times \ol{\AA}_1$. If $v,v',v''$ are vector fields on $Q=Q_2\times Q_1$ that are tangent to $\gr(\Phi)$, then $j(v),j(v'),j(v'')$ restrict to sections of $\gr(R)$, 
and so does $\Cour{j(v'),j(v'')}$.  It follows that $\eta(v,v',v'')=\l j(v),\Cour{j(v'),j(v'')}\r$
vanishes along $\gr(\Phi)$, which is to say $\eta_1=\Phi^*\eta_2$. 
\end{proof}
\begin{proposition}\label{prop:exact1}
Let $\AA_1,\AA_2$ be exact Courant algebroids over $Q_1,Q_2$, with isotropic splittings $j_1,j_2$ identifying $\AA_i=\T Q_{i,\eta_i}$. Then an exact Courant morphism $R\colon \AA_1\da \AA_2$
with base map $\Phi\colon Q_1\to Q_2$ 
 is equivalently described by a 2-form $\om\in\Omega^2(Q_1)$ satisfying
\begin{equation}\label{eq:domega}
  \d\om=\eta_1-\Phi^*\eta_2. 
\end{equation}
\end{proposition}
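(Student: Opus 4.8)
The plan is to produce a bijection between exact Courant morphisms $R\colon\AA_1\da\AA_2$ with the given base map $\Phi$ and $2$-forms $\om\in\Omega^2(Q_1)$ satisfying \eqref{eq:domega}, using the splittings $j_1,j_2$ as reference points. For the direction $R\mapsto\om$ I would apply Proposition \ref{prop:pullsplit} to $R$ and the splitting $j_2$: this produces a unique isotropic splitting $j_1'\colon TQ_1\to\AA_1$ with $R\circ j_1'=j_2\circ T\Phi$, whose associated $3$-form is $\Phi^*\eta_2$. Since $j_1$ and $j_1'$ are two isotropic splittings of the same exact Courant algebroid $\AA_1$, the affine structure \eqref{eq:jprime} provides a unique $\om\in\Omega^2(Q_1)$ measuring their difference, and comparing the associated $3$-forms $\eta_1$ and $\Phi^*\eta_2$ through the relation in \eqref{eq:jprime} (with the sign fixed by the convention in \eqref{eq:etadef}) yields exactly \eqref{eq:domega}.

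For the reverse direction $\om\mapsto R$ I would write the morphism down explicitly. Given $\om$ satisfying \eqref{eq:domega}, define $\gr(R)\subseteq\AA_2\times\ol\AA_1$ over $\gr(\Phi)$ by
\[ (v_1+\mu_1)\sim_R(v_2+\mu_2)\quad\Longleftrightarrow\quad v_2=T\Phi(v_1)\ \text{ and }\ \mu_1=\Phi^*\mu_2+\iota_{v_1}\om. \]
That $\gr(R)$ is a Lagrangian subbundle is a fibrewise computation: isotropy follows from the antisymmetry of $\om$ together with $\langle\mu_2,T\Phi(v_1)\rangle=\langle\Phi^*\mu_2,v_1\rangle$, while maximality (so that $\gr(R)=\gr(R)^\perp$) follows from non-degeneracy of the metric, since the two defining conditions are precisely the requirement that a pair annihilate all of $\gr(R)$. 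Fullness is immediate: the parametrization of $\gr(R)$ by $(v_1,\mu_2)$ shows that $\a=(\a_2,\a_1)$ maps $\gr(R)$ onto $\gr(T\Phi)$, so $R$ is exact by Proposition \ref{prop:alleq}.

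The one substantial point — and the only place \eqref{eq:domega} enters — is the integrability of $\gr(R)$. Here I would use the identification $\AA_2\times\ol\AA_1\cong\T(Q_2\times Q_1)_H$ furnished by $j_2\times j_1$, with $H=\pr_2^*\eta_2-\pr_1^*\eta_1$; under it $\gr(R)$ becomes the graph-type Lagrangian attached to the submanifold $\gr(\Phi)\cong Q_1$ and the $2$-form $-\om$. For two sections $w_a+\beta_a$ of $\T(Q_2\times Q_1)_H$ restricting to sections of $\gr(R)$, the $w_a$ are tangent to $\gr(\Phi)$ and $i^*\beta_a=-\iota_{w_a}\om$; running the Cartan-calculus simplification of the covector part of $\Cour{w_1+\beta_1,w_2+\beta_2}_H$ and pulling back to $\gr(\Phi)$ leaves the obstruction $\iota_{w_1}\iota_{w_2}(i^*H+d\om)$. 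Since $i^*H=\Phi^*\eta_2-\eta_1$ under $\gr(\Phi)\cong Q_1$, this obstruction vanishes for all tangent $w_1,w_2$ exactly when $d\om=\eta_1-\Phi^*\eta_2$, i.e.\ \eqref{eq:domega}. I expect this submanifold-graph computation to be the main obstacle, the delicate part being the sign bookkeeping forced by the reversed metric on $\ol\AA_1$ (the covector flip $\mu_1\mapsto-\mu_1$); the infinite-dimensional subtleties are mild, since $\gr(R)$ is presented as an explicit closed graph whose fullness is manifest.

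Finally I would check that the two assignments are mutually inverse. Feeding the $R$ built from $\om$ into Proposition \ref{prop:pullsplit} recovers $j_1'(v)=v+\iota_v\om$, hence the same $\om$; conversely, given an exact morphism $R$ with extracted $2$-form $\om$, the explicit morphism attached to $\om$ has the same pulled-back splitting $j_1'$, so it coincides with $R$ by the uniqueness clause of Proposition \ref{prop:pullsplit}. This establishes the asserted equivalence.
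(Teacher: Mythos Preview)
Your proposal is correct and follows essentially the same approach as the paper. The paper's argument is very terse---it says the result is standard (citing \cite{gua:ge1}) and that the proof carries over to infinite dimensions, then sketches exactly your two directions: extract $\om$ by comparing $j_1$ with the pullback splitting from Proposition~\ref{prop:pullsplit}, and conversely define $\T\Phi_\om$ by the same explicit formula you wrote down. You have simply supplied the details (Lagrangian, fullness, integrability via the Cartan-calculus computation, and mutual inverseness) that the paper leaves to the reader.
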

This is standard in the finite-dimensional case (see e.g. \cite{gua:ge1}), and the proof carries over to infinite dimensions.  In one direction, the 2-form $\om$ relates the  splitting $j_1$ to the pullback 
of the splitting $j_2$ (see Proposition \ref{prop:pullsplit}). In the other direction, $\Phi$ and $\om$
determine an exact Courant morphism 
\[ \T \Phi_\om\colon \T Q_{1,\eta_1}\da \T Q_{2,\eta_2}\] 
by the condition 
\begin{equation}
\label{eq:morr}v_1+\mu_1\sim_{\T \Phi_\om} v_2+\mu_2\ \Leftrightarrow \ v_2=\Phi_*v_1,\ \ 
\mu_1=\Phi^*\mu_2+\iota(v_1)\om.
\end{equation} 
Under composition, $\T \Phi_\om\circ \T \Phi'_{\om'}=\T(\Phi\circ \Phi')_{\om+\Phi^*\om'}$. 

Given an \emph{exact Dirac structure}, i.e., a Dirac structure $(\AA,E)$ with $\AA$ exact, we define an \emph{exact Hamiltonian space} for $(\AA,E)$ to be a manifold $M$ together
with an  exact Dirac morphism
$R\colon (\T M, TM)\da (\AA,E)$.   
\begin{proposition}\label{prop:hamiltonian}
Let $(\AA,E)$ be an exact Dirac structure over $Q$, with a given isotropic splitting 
$j\colon TQ\to \AA$ identifying $\AA=\T Q_\eta$. Then the exact Hamiltonian spaces 
for $(\AA,E)$ are described by a map $\Phi\colon M\to Q$, a 2-form $\om\in \Omega^2(M)$, 
and a Lie algebroid action of $E$ along $\Phi$, satisfying 
\begin{enumerate}
\item \label{it:1a} $ \d\om=-\Phi^*\eta$, 
\item \label{it:1b} $\ker(\om)\cap \ker(T\Phi)=0$, 
\item \label{it:1c} $\iota(\sig_M)\om=-\Phi^*(j^\star \sig)$. 
\end{enumerate}
Here $\Phi^*\colon \Gamma(E)\to \Gamma(TM),\ \sigma\mapsto \sig_M$, is the Lie algebroid action, and $j^\star\colon \AA\to T^*Q$ is the bundle map dual to $j$.  
\end{proposition}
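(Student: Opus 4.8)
The plan is to set up a dictionary between the single datum of an exact Dirac morphism $R\colon(\T M,TM)\da(\AA,E)$ and the triple $(\Phi,\om,\sigma\mapsto\sigma_M)$, translating each of the three numbered conditions into one defining property of $R$, and then to run the construction in reverse. First I would fix the standard isotropic splitting of $\T M$ (so that its $3$-form $\eta_1$ vanishes) together with the given splitting $j$ of $\AA=\T Q_\eta$, and invoke Proposition \ref{prop:exact1}: an exact Courant morphism $R\colon\T M\da\T Q_\eta$ with base $\Phi$ is the same thing as a $2$-form $\om\in\Omega^2(M)$ with $\d\om=\eta_1-\Phi^*\eta=-\Phi^*\eta$, which is exactly condition \eqref{it:1a}, and then $R=\T\Phi_\om$ is given explicitly by \eqref{eq:morr}. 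This disposes of the Courant-integrability content of the statement; what remains is to characterize when $R=\T\Phi_\om$ is moreover a Dirac morphism for the pair $(TM,E)$.

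Next I would unwind the relation \eqref{eq:morr} in the case at hand. For $v_1\in T_mM$ (viewed in $TM\subset\T M$, hence with vanishing covector part) and $\sigma\in E_{\Phi(m)}$, writing $\sigma=\a(\sigma)+j^\star\sigma$ under the splitting, the relation $v_1\sim_R\sigma$ is equivalent to the two equations $T\Phi(v_1)=\a(\sigma)$ and $\iota(v_1)\om=-\Phi^*(j^\star\sigma)$. By definition $R$ is a Dirac morphism iff every $\sigma\in E_{\Phi(m)}$ is $R$-related to a \emph{unique} $v_1\in T_mM$. The existence of such a $v_1$ for all $\sigma$ is precisely the datum of a bundle map $E\to TM$ over $\Phi$, $\sigma\mapsto\sigma_M:=v_1$, compatible with the anchors in the sense that $T\Phi(\sigma_M)=\a(\sigma)$, and satisfying $\iota(\sigma_M)\om=-\Phi^*(j^\star\sigma)$, i.e.\ condition \eqref{it:1c}; this is the Lie algebroid action, regarded as an anchored bundle map. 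Uniqueness is equivalent to $TM\cap\ker(R)=0$; since \eqref{eq:morr} gives $\ker(R)=\{v_1+\iota(v_1)\om\mid v_1\in\ker(T\Phi)\}$, one computes $TM\cap\ker(R)=\ker(T\Phi)\cap\ker(\om)$, so uniqueness is exactly condition \eqref{it:1b}.

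It then remains to promote the existence statement $E\subseteq R\circ TM$ to the equality $\Phi^*E=R\circ TM$ demanded of a Dirac morphism, and to see that the resulting bundle map is a genuine Lie algebroid action. For the first point I would avoid transversality arguments and argue by maximality: the forward image $R\circ TM$ of the isotropic subbundle $TM$ under the Lagrangian relation $R$ is again isotropic, since for $v_1\sim_R x_2$ and $v_1'\sim_R x_2'$ with $v_1,v_1'\in TM$, Lagrangianity of $\gr(R)$ gives $\l x_2,x_2'\r=\l v_1,v_1'\r=0$; as it contains the Lagrangian subbundle $E$, and a Lagrangian is a maximal isotropic, the chain $R\circ TM\subseteq(R\circ TM)^\perp\subseteq E^\perp=E\subseteq R\circ TM$ forces $R\circ TM=E$. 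This simultaneously produces the subbundle equality and shows that weak transversality suffices, with no closedness hypothesis. Bracket preservation of $\sigma\mapsto\sigma_M$ is then automatic: as recalled in Section \ref{subsec:morphisms}, every Dirac morphism induces a Lie algebroid comorphism, so $\Phi^*\colon\Gamma(E)\to\Gamma(TM)$ preserves brackets. Running all of these equivalences backwards gives the converse: from $(\Phi,\om,\sigma\mapsto\sigma_M)$ satisfying \eqref{it:1a}--\eqref{it:1c} one forms $R=\T\Phi_\om$ (an exact Courant morphism by \eqref{it:1a}), uses \eqref{it:1c} and anchor compatibility to get $E\subseteq R\circ TM$, hence $R\circ TM=E$ by the maximality argument, and \eqref{it:1b} for uniqueness, so that $R$ is an exact Dirac morphism with base map $\Phi$.

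The step I expect to be the main obstacle is precisely this passage from the pointwise existence-and-uniqueness data to the subbundle equality $\Phi^*E=R\circ TM$ in infinite dimensions: one cannot appeal to transversality to conclude that the weakly transverse composition $R\circ TM$ is Lagrangian (Example \ref{ex:hh} shows this can fail), and must instead rely on the isotropy-plus-maximality argument above, which sidesteps any closedness issue. A secondary point needing care is checking that $\sigma\mapsto\sigma_M$ is a smooth bundle map of locally constant rank---guaranteed here because its graph is cut out by the smooth equations extracted from \eqref{eq:morr} and its image is identified with $E$---and that the anchor compatibility built into the notion of a Lie algebroid action along $\Phi$ is exactly the first of those two equations.
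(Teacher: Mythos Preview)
Your proof is correct and follows exactly the same route as the paper's: identify $R=\T\Phi_\om$ via Proposition~\ref{prop:exact1} to get \eqref{it:1a}, compute $\ker(R)$ to match weak transversality with \eqref{it:1b}, and match \eqref{it:1c} with $R\circ TM=\Phi^*E$. The paper's proof simply asserts the last equivalence without argument, so your isotropy-plus-maximality chain $R\circ TM\subseteq(R\circ TM)^\perp\subseteq E^\perp=E\subseteq R\circ TM$ is a genuine (and welcome) addition, filling in precisely the infinite-dimensional step the paper elides.
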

\begin{proof}
The exact Courant morphisms $R\colon \T M\da \T Q_\eta$ are of the form 
$R=\T \Phi_\om$ where $\om$ satisfies \eqref{it:1a}. 
Since $\ker(R)=\{v-\iota_v\om|\ v\in\ker T\Phi\}$, we 
see that the weak transversality condition $\ker(R)\cap TM$ for the composition 
$R\circ TM$ is equivalent to \eqref{it:1b}. The last property \eqref{it:1c} is equivalent to 
$R\circ TM=\Phi^*E$. 
\end{proof}

\begin{example} (See \cite{bur:cou}.)
Any leaf $i_\O\colon \O\hra Q$ of  an exact Dirac structure $(\AA,E)$ is naturally an exact Hamiltonian space. Here
\[ R\colon (\T\O,T\O)\da (\AA,E)\]
is uniquely defined by its properties that $v\sim_R x$ for $v\in T\O$ and $x\in E$ with 
$(T i_\O)(v)=\a(x)$, together with $\mu\sim_R \a^\star(\nu)$ for  $\mu\in T^\star \O$, 
$\nu\in T^\star Q$ such that $\mu\sim_{T^\star i_\O}\nu$. Given an isotropic splitting, identifying $\AA=\T Q_\eta$, we obtain a 2-form $\om\in\Omega^2(\O)$ with $\d\om=-i_\O^*\eta$. 
\end{example}

\begin{example}\label{ex:exex}
Let  $Q$ be a manifold with a weak Poisson structure $(\T Q,E)$, thus  $E\cap TQ=0$. Let $M$ be an exact Hamiltonian space, defined by an exact Dirac morphism $R=\T \Phi_\om\colon (\T M,TM)\da (\T Q,E)$. According to the proposition,  $\ker(\om)\cap \ker(T\Phi)=0$.  In fact, it is automatic 
that $\ker(\om)=0$. To see this, note that 
\[ \Phi^*E=R\circ TM,\ \ \gr(\om)=TQ\circ R.\]
Since $TQ\cap E=0$, Proposition \ref{prop:forback} shows that $\gr(\om)\cap TM=0$. Equivalently, $\ker(\om)=0$. 
\end{example}

\subsection{The Cartan-Dirac structure}
\label{sec:carcou}
Of special interest in this paper is the  \emph{Cartan-Dirac structure} on a Lie group $G$. 
We describe here its definition as an action Courant algebroid; later we will show that the 
same Dirac structure arises by reduction from the Lie-Poisson structure on the space of connections. 
\subsubsection{Definition of the Cartan-Dirac structure}
\label{ex:cardirac}
  Let $G$ be a Lie group. For $X\in\g$ we denote by $X^L,\ X^R$ the
  corresponding left, right-invariant vector fields. The Maurer-Cartan
  forms on $G$ will be denoted $\theta^L,\theta^R\in \Omega^1(G,\g)^G$;
  thus $\iota(X^L)\theta^L=X=\iota(X^R)\theta^R$.

  Suppose $G$ carries a bi-invariant pseudo-Riemannian metric, with corresponding $\Ad$-invariant metric $(X_0,X_{1})\mapsto X_{0}\cdot X_{1}$ on $\g$. We denote by $\ol{G}$ the Lie group $G$ with the opposite
  pseudo-Riemannian metric, and likewise by $\ol{\g}$ the Lie algebra
  $\g$ with the opposite metric. Let $D:=\ol{G}\times G$ act on $G$ by
  \[ (g_0,g_1).a=g_0\, a\, g_1^{-1}.\]
  The infinitesimal action $\dd=\ol{\g}\oplus\g\to \Gamma(TG)$ reads as
$ (X_0,X_1)\mapsto X_1^L-X_0^R$. It has co-isotropic stabilizers, hence  it defines an 
action Courant algebroid 
  \begin{equation}\label{eq:carcou} \AA=G\times \dd. \end{equation}
We refer to  $\AA$ as the \emph{Cartan-Courant algebroid}. If $\mf{s}\subseteq
  \dd$ is any subspace, the subbundle
  \[ E^{(\s)}=G\times {\mf{s}}\] is Lagrangian if and only if
  ${\mf{s}}$ is Lagrangian, and is involutive if and only if
  ${\mf{s}}$ is a Lie subalgebra. Thus, any Lagrangian Lie subalgebra
  $\mf{s}\subseteq \dd$ determines a Dirac structure.  The Dirac
  structure $E=E_{\g_\Delta}\subseteq \AA$ defined by the diagonal
  $\g_\Delta\subseteq \dd$ is called the \emph{Cartan-Dirac structure}.

  \begin{example}\label{ex:auto1}
    If $\kappa\colon\g\to \g$ is an orthogonal Lie algebra
    automorphism, then the graph
    $\gr(\kappa)=\{(\kappa(X),X)|\,X\in\g\}$ is a Lagrangian Lie
    subalgebra. Hence it determines a Dirac structure
    $E^{(\kappa)}=E_{\gr(\kappa)}$.  If the metric on $\g$ is
    positive definite, then any Lagrangian Lie subalgebra
    $\mf{s}\subseteq \dd$ arises in this way.  Indeed, any Lagrangian
    subspace is then given as the graph of an orthogonal
    transformation, and the condition that $\mf{s}$ is a Lie
    subalgebra means that this transformation preserves Lie brackets.
  \end{example}
%

\subsubsection{Splitting}
  The Cartan-Courant algebroid \eqref{eq:carcou} is exact, with an isotropic splitting $j\colon TG\to \AA$
   given at the group unit by the map 
  $\g\to \dd,\ X\mapsto \frac{1}{2} (-X,\, X)$. 
Equivalently, the map on sections $j\colon \Gamma(TG)\to \Gamma(\AA)$ is 
\begin{equation}\label{eq:courantsplitting}
 j(v)=\Big(-\hh \iota(v)\theta^R,\ \hh\iota(v)\theta^L\Big),
 \end{equation}
for $v\in \Gamma(TG)$. 
%
  By direct calculation, one find that the resulting 3-form 
  is the \emph{Cartan 3-form}
  \[ \eta=\f{1}{12}\theta^L\cdot [\theta^L,\theta^L],\]
  and that $\alpha=j^\star\circ \varrho\colon \dd\to \Omega^1(Q)$ is given by 
  \begin{equation}\label{eq:rhoG} \
 \alpha(X_0,X_1)=\hh(\theta^L\cdot X_1+\theta^R\cdot X_0),\ \ \
  (X_0,X_1)\in\dd.\end{equation}
Let $\varrho\colon \AA=G\times(\ol\g\oplus \g)\cong \T G_\eta$ be the resulting 
isomorphism. On the level of sections, 
\begin{equation}\label{eq:varrhocartan} \varrho(X_0,X_1)=X_1^L-X_0^R+
    \alpha(X_0,X_1)\end{equation}
for $(X_0,X_1)\in \ol\g\oplus\g$.  Taking $X_0=X_1=X$, we see that the Cartan Dirac
  structure is spanned by the sections
$X_G+\hh(\theta^L+\theta^R)\cdot X$ for $X\in\g$, where $X_G$ is the 
generating vector field for the conjugation action. 

\subsubsection{Hamiltonian spaces}
Suppose $\mf{s}\subseteq \dd$ is a Lagrangian Lie subalgebra, defining a Dirac structure $(\AA, E^{(\s)})$.  The data of Hamiltonian space $R\colon (\T M,TM)\da (\AA, E^{(\s)})$ for this Dirac
  structure gives, in particular, a Lie algebra action of $\s$ on $M$, such that $Y_M\sim_R \varrho(Y)$ for all  $Y\in\mf{s}$. 
  If $R$ is exact, one can use splittings to formulate these conditions in terms of differential forms. 
  Indeed, Proposition \ref{prop:hamiltonian} specializes to the following statement. 
  \begin{proposition}\label{prop:qh}
    An exact Hamiltonian space for the Dirac structure $(\AA, E^{(\s)})$ is
    equivalent to a triple $(M,\om,\Phi)$, consisting of a manifold $M$ 
    with an $\mf{s}$-action, a 2-form $\om\in\Omega^2(M)$ and an $\s$-equivariant map 
    $\Phi\colon M\to G$ satisfying
\begin{enumerate}
\item $ \d\om=-\Phi^*\eta$,
\item $  \ker(\om)\cap \ker(T\Phi)=0$, 
\item  $\iota(Y_M)\om=-\hh (Y_1\cdot \theta^L+Y_0\cdot \theta^R)$ for  all $Y=(Y_0,Y_1)\in\mf{s}$. 
\end{enumerate}     
%
  \end{proposition}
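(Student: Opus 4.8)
The plan is to deduce Proposition \ref{prop:qh} as a direct specialization of the general Proposition \ref{prop:hamiltonian}, applied to the exact Dirac structure $(\AA, E^{(\s)})$ with $Q = G$, with $\AA = G\times\dd$ the Cartan-Courant algebroid, and with $E = E^{(\s)} = G\times\s$. First I would fix the isotropic splitting $j$ of \eqref{eq:courantsplitting}, which identifies $\AA \cong \T G_\eta$ for $\eta$ the Cartan 3-form; this is exactly the setup required to invoke Proposition \ref{prop:hamiltonian}. Under that identification, an exact Hamiltonian space is encoded by a map $\Phi\colon M\to G$, a 2-form $\om\in\Omega^2(M)$, and a Lie algebroid action of $E^{(\s)}$ along $\Phi$ satisfying the three listed conditions, and the task is to rewrite each of these in the concrete terms of the statement.

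The first substantive step is to translate the Lie algebroid action of the \emph{action} algebroid $E^{(\s)} = G\times\s$ into a Lie algebra action of $\s$ together with equivariance of $\Phi$. Such an action is a bundle map $\Phi^* E^{(\s)}\to TM$ that is compatible with the anchors and preserves brackets on sections. I would evaluate it on the constant sections $Y\in\s$, producing vector fields $Y_M$ on $M$; bracket-compatibility then forces $[Y_M,Y'_M]=[Y,Y']_M$, so $Y\mapsto Y_M$ is a Lie algebra action of $\s$, while anchor-compatibility becomes precisely the $\s$-equivariance of $\Phi$ relative to the infinitesimal action $(X_0,X_1)\mapsto X_1^L-X_0^R$ on $G$. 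Conversely, a Lie algebra action of $\s$ and an equivariant $\Phi$ determine the bundle map on constant sections, which extends $C^\infty(M)$-linearly to all of $\Phi^* E^{(\s)}$ because the constant sections generate $\Gamma(E^{(\s)})$ as a module; this yields the desired bijection between the two descriptions of the action.

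With this dictionary in place, conditions \eqref{it:1a} and \eqref{it:1b} of Proposition \ref{prop:hamiltonian} transcribe verbatim into conditions (a) and (b) of Proposition \ref{prop:qh}, now with $\eta$ the Cartan 3-form. For condition \eqref{it:1c}, namely $\iota(\sig_M)\om = -\Phi^*(j^\star\sig)$, I would observe that both sides transform the same way under multiplication of $\sig$ by $\Phi^* f$ for $f\in C^\infty(G)$—the left because the action is a bundle map, the right because $j^\star$ is a bundle map followed by pullback—so it suffices to verify the identity on constant sections $\sig=Y=(Y_0,Y_1)\in\s$. There I would apply the formula \eqref{eq:rhoG}, which gives $j^\star Y = \hh(\theta^L\cdot Y_1 + \theta^R\cdot Y_0)$; pulling back by $\Phi$ and using symmetry of the pairing produces exactly condition (c).

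The main obstacle is the translation in the second paragraph: establishing that an action of the \emph{Lie algebroid} $G\times\s$ along $\Phi$ is the same datum as a Lie algebra action of $\s$ together with $\s$-equivariance of $\Phi$, and in particular justifying that condition \eqref{it:1c} need only be tested on constant sections. Once this reduction to constant sections is secured, everything else is bookkeeping: conditions (a) and (b) carry over unchanged, and condition (c) is the routine evaluation of $j^\star$ through \eqref{eq:rhoG}.
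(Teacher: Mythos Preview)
Your proposal is correct and follows exactly the approach the paper indicates: the paper simply states that ``Proposition \ref{prop:hamiltonian} specializes to the following statement'' without further proof, and you have correctly supplied the details of that specialization---identifying the Lie algebroid action of the action algebroid $G\times\s$ with an $\s$-action plus equivariance of $\Phi$, and computing $j^\star$ on constant sections via \eqref{eq:rhoG}. One small phrasing point: when you reduce condition \eqref{it:1c} to constant sections, the multiplication is by $f\in C^\infty(G)$ (not $\Phi^*f$), after which both sides acquire a factor of $\Phi^*f$; your conclusion is right, only the wording is slightly off.
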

For the special case that $\mf{s}$ is the diagonal, we recover the axioms of a
  q-Hamiltonian $\g$-space as in
  \cite{al:mom}. 
%
  If the action of $\mf{s}$ integrates to an action of a Lie group $S$,
  and if $R$ is $S$-equivariant, we get a \emph{Hamiltonian
    $S$-space for $(\AA, E^{(\s)})$}: That is, $\om$ is $S$-invariant and 
    $\Phi$ is $S$-equivariant.     
    For instance, the $S$-orbits
  in $G$ are Hamiltonian $S$-spaces for $(\AA, E^{(\s)})$. Other examples are obtained by `fusion', as in \cite{al:mom}. 

%


\subsubsection{Multiplicative properites}
\label{subsec:mult}
\alejandro{Added this subsection back for future reference. It was written like this in a previous version (Dirac-holonomy.tex)}
  Give $\dd=\ol{\g}\oplus \g$ the \emph{pair groupoid} structure
  $\dd\rra\g$, with multiplication $(Y_0,Y_1)\circ
  (Y_0',Y_1')=(Y_0,Y_1')$ for $Y_1=Y_0'$. Taking the direct product of
  this groupoid with the group $G$, we obtain a groupoid $\AA\rra
  \g$. Since the groupoid multiplication covers the group
  multiplication of $G$, this is pictured as
  \[ \xymatrix{ {\AA} \ar@<2pt>[r]\ar@<-2pt>[r] \ar[d] &{\g}\ar[d]\\
    G \ar@<2pt>[r]\ar@<-2pt>[r] & \pt }\]
  Let $\Mult_\AA$ be the groupoid multiplication, defined on the
  subset of composable elements. Its graph $\on{gr}(\Mult_\AA)\subset
  \AA\times\ol{\AA}\times\ol{\AA}$ is a Dirac structure along the
  graph $\on{gr}(\Mult_G)$ of the group multiplication, defining a
  Courant morphism \cite{al:pur},
  \[ \on{gr}(\Mult_\AA)\colon \AA\times \AA\da \AA.\]
  Similarly, the groupoid inversion,
  $\on{Inv}_\AA=\on{Inv}_G\times\on{Inv}_\dd$ defines a Courant
  morphism
  \[ \on{gr}(\on{Inv}_\AA)\colon \AA\da \ol{\AA}.\]
  The Cartan Dirac structure $(\AA,E)$ makes $G$ into a \emph{Dirac
    Lie group}, in the sense that the groupoid multiplication defines
  a morphism of Manin pairs,
  \begin{equation}\label{eq:gfusion1}
    \on{gr}(\Mult_\AA)\colon (\AA,E)\times (\AA,E)\da (\AA,E),
  \end{equation}
  with underlying map the group multiplication \cite{al:pur, lib:dir}.
  More generally, suppose $\mf{s}_1,\ \mf{s}_2\subset \dd$ are
  Lagrangian Lie subalgebras, and that the groupoid multiplication
  $\mf{s}_1\circ \mf{s}_2$ is a transverse composition of linear relations.
  Then
  $\mf{s}_1\circ \mf{s}_2$ is a Lagrangian Lie subalgebra, and
  $\on{gr}(\Mult_\AA) $ defines a morphism of Manin pairs
  \begin{equation} \label{eq:gfusion} \on{gr}(\Mult_\AA)\colon
    (\AA,E_{\mf{s}_1})\times (\AA,E_{\mf{s}_1})\da
    (\AA,E_{\mf{s}_1\circ \mf{s}_2}).
  \end{equation}
  \alejandro{This last paragraphs is not used later. Remove?} In terms of the identification $\AA\cong \TG_\eta$ defined by the
  splitting of $\AA$, the multiplication morphism is given by the pair
  $(\Mult_G,\varsigma)$, where $\varsigma$ is a 2-form on $G\times G$
  satisfying
  \[ \d\varsigma=\Mult_G^*\eta-\pr_1^*\eta-\pr_2^*\eta,\]
  where $\pr_1,\pr_2\colon G\times G\to G$ are the two projections. As
  shown in \cite{al:pur}, this 2-form is
  \begin{equation}\label{eq:varsigma}
    \varsigma=-\hh \pr_1^*\theta^L\cdot \pr_2^*\theta^R.
  \end{equation}

\section{Reduction of Dirac structures}
\label{sec: reduction general}

In this section we continue to use the terms ``manifold'', ``vector bundle'', ``Lie
group'', etc. to refer to Hilbert manifolds, Hilbert vector bundles, Hilbert
Lie groups, and so on, unless otherwise specified.

\subsection{Actions on Courant algebroids}
Let $\AA$ be a Courant algebroid with base $Q$. 
A \emph{Courant derivation} of $\AA$ is a
linear operator $\ti{v}$ on $\Gamma(\AA)$, together with a vector
field $v$ on $Q$, satisfying
\[
\begin{split}
  v\l\sig_1,\sig_2\r&=\l \ti{v}\sig_1,\sig_2\r+\l \sig_1,\ti{v}\sig_2\r,\\ 
  \ti{v}\Cour{\sig_1,\sig_2}&=\Cour{\ti{v}\sig_1,\sig_2}+\Cour{\sig_1,\ti{v}\sig_2},\\
  \a(\ti{v}(\sig))&=[v,\a(\sig)].
\end{split}
\]
for all $\sig,\sig_1,\sig_2\in\Gamma(\AA)$.  These properties imply
the property, $\ti{v}(f\sigma)=v(f)\sigma+f \ti{v}(\sigma)$
for all $f\in C^\infty(Q)$ and $\sigma\in\Gamma(\AA)$.  Let
$\on{Der}(\AA)$ be the Lie algebra of Courant derivations of $\AA$.  
A Courant derivation is called  \emph{inner} if it is of the form $\ti{v}=\Cour{\sigma,\cdot}$
for some $\sig\in \Gamma(\AA)$; we refer to $\sig$ as a \emph{generator} of this Courant
derivation. 
Note that the map 
%
$\Gamma(\AA)\to \on{Der}(\AA),\ \ \sigma\mapsto \Cour{\sigma,\cdot}$
is bracket-preserving.

A \emph{Courant automorphism} of $\AA$ is a vector bundle automorphism
preserving the metric, the bracket, and compatible with the anchor.  One can informally regard $\on{Der}(\AA)$ as the Lie algebra of the group $\on{Aut}(\AA)$ of Courant automorphisms.

In particular, any 1-parameter group $g_t\in \Aut(\AA),\ t\in \R$ of Courant automorphisms determines a Courant derivation $\ti{v}(\sigma)=\f{\p}{\p t} \big|_{t=0}(g_{-t})^*\sigma$.

\begin{remark}
For \emph{exact} Courant algebroids, the group $\on{Aut}(\AA)$ and the Lie algebra $\on{Der}(\AA)$ are 
described in \cite[Section 2.1]{gua:ge1}. Choose a splitting to identify $\AA=\T Q_\eta$ for a closed 3-form $\eta$. Then $\on{Der}(\T Q_\eta)$ is the Lie subalgebra of the semidirect product $\Gamma(T Q)\ltimes \Omega^2(Q)$, consisting of pairs $(v,\varepsilon)$ with $\L_v\eta=\d \varepsilon$.  
The corresponding derivation $\ti{v}$ reads as 
\[ \ti{v}(w+\mu)=[v,w]+\L_v\mu-\iota_w\varepsilon,\]
for vector fields $w$ and 1-forms $\mu$. The inner derivation $\Cour{\sigma,\cdot}$ 
defined by $\sigma=w+\mu$ corresponds to the pair $(v,\varepsilon)$ with $v=w$ and $\varepsilon= -(\d \mu+\iota_w\eta)$. (We see that that $\Cour{\sigma,\cdot}=0$ if and only if $\sigma=\a^\star\mu$ with $\d\mu=0$; this description does not depend on the choice of splitting.)
Similarly
$\on{Aut}(\AA)$ is isomorphic to the subgroup of the semidirect product $\on{Diff}(Q)\ltimes \Omega^2(Q)$ consisting of 
pairs $(\Phi,\varepsilon)$ with $\Phi^*\eta+d\varepsilon=0$; the action of such a pair is given by 
the Courant morphism $\T\Phi_\varepsilon$. (Since $\Phi$ is a diffeomorphism, this morphism is given by an actual vector bundle automorphism of $\AA$.) 
\end{remark}
%
%
\begin{definition}
\begin{enumerate}
\item
Let $\g$ be a Lie algebra. 
A \emph{$\g$-action on a Courant algebroid} $\AA$ is a Lie algebra homomorphism
$\g\to \on{Der}(\AA), \ \xi\mapsto \xi_\AA$. A bracket preserving map
$\varrho\colon \g\to \Gamma(\AA)$
is said to define
\emph{generators} for the $\g$-action if
$\xi_\AA=\Cour{\varrho(\xi),\cdot}$ for all $\xi\in\g$.  
\item 
Let $G$ be a Lie group, acting on $\AA$ by Courant automorphisms. A $G$-equivariant map 
$\varrho\colon \g\to \Gamma(\AA)$
is said to define \emph{generators} for the $G$-action if 
\[ \xi_\AA:= \left.\tfrac{\p}{\p t}\right|_{t=0}\exp(-t\xi)^*=\Cour{\varrho(\xi),\cdot}\]
 for all $\xi\in\g$.  
\end{enumerate}
\end{definition}
Observe that $\a(\varrho(\xi))=\xi_Q$ are the generating vector fields for an action on $Q$.
We will use the same letter $\varrho$ to denote the associated bundle map
\[ \varrho\colon Q\times\g\to\AA,\ (m,\xi)\mapsto \varrho(\xi)_m.\] 
The dual map $\AA\to \g^\star$ is sometimes referred to as a \emph{moment map} for the Courant $G$ action. 
The set of generators for a Courant $G$-action is either empty, or is an affine space modeled on the space of $G$-equivariant maps from $\g$ into the kernel of  the map $\sigma\mapsto \Cour{\sigma,\cdot}$. For an exact Courant algebroid, this kernel is identified with the space of closed 1-forms.
\begin{example}
For any action Courant algebroid  $\AA=Q\times \dd$, the Lie algebra $\dd$ acts on $\AA$ by derivations, with the constant sections 
$\varrho\colon \dd\to \Gamma(\AA)$ as generators. In particular, the Cartan-Courant algebroid
$G\times (\ol{\g}\oplus \g)\cong \T G_\eta$ is $D=\ol{G}\times G$-equivariant, with the map 
\eqref{eq:varrhocartan} as generators.  
\end{example}
\begin{example}[Lie-Poisson structure]\label{ex:lieposson}
  Let $G$ be a Lie group  with Lie algebra $\g$,
  let $\xi_{\g^\star }\in\Gamma(T\g^\star )$ be the
  generating vector fields for the coadjoint action, and denote by
  $\d\mu\in \Omega^1(\g^\star,\g^\star)$ the tautological
  1-form. 
   Then the map $\varrho\colon \g\to \Gamma(\T\g^\star)$, 
  \begin{equation}
    \label{eq:coadj}\varrho(\xi)=\xi_{\g^\star }+\l\d\mu,\xi\r
  \end{equation}
defines isotropic generators for the $G$-action on $\T \g^\star$.
The Dirac structure $E\subseteq \T \g^\star $ spanned by the sections
$\varrho(\xi),\ \xi\in\g$ is a Poisson structure, in the strong sense that $\T \g^\star =E\oplus T\g^\star$. It is known as the  \emph{Lie-Poisson structure} on $\g^\star$.  
\end{example}

\subsection{Reduction of Dirac structures}
\label{subsec:coured}
Let $\AA\to Q$ be a $G$-equivariant Courant algebroid with generators $\varrho\colon Q\times \g\to \AA$.  We assume that the action is principal, i.e.~that $Q$ is
a principal bundle with base manifold $Q/G$. 
\begin{lemma} \label{lem:closlem}  The generators span a closed subbundle 
$\ran(\varrho)=\varrho(Q\times \g)\subset \AA$.
\end{lemma}
\begin{proof}
  The map $\varrho$ is a continuous bundle map. Since the
  composition $\a\circ \varrho\colon Q\times\g\to TQ$ is injective,
  with closed image, $\varrho$ must also have a closed image.
\end{proof}
We will describe the reduction procedure for the case that the generators $\varrho(\xi)$ are isotropic; 
equivalently, $\on{ran}(\varrho)=\varrho(Q\times \g)$ is isotropic. 

\begin{theorem}[
  \cite{bur:red}] \label{th:bcg} Suppose the generators are isotropic; 
  thus   
  $C=\varrho(Q\times\g)^\perp$ is coisotropic.  Then $\AA_C =
  C/C^\bot$ is a $G$-equivariant bundle, and the quotient bundle
  \[ \AA_\red=\AA_C/G\]
  with the induced fiber metric, bracket and anchor map is a Courant
  algebroid.  
  If $E\subseteq \AA$ is a $G$-invariant Dirac structure and $E+C$ is a
  closed subbundle, then the reduction $E_C = (E\cap C)/(E\cap C^\perp)$ is a
  $G$-equivariant bundle, and the reduced bundle
  \[ E_\red=E_C/ G \subseteq \AA_\red\]
defines a Dirac structure $(\AA_\red,\ E_\red)$ over $Q/G$.  
\end{theorem}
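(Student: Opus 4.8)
The plan is to reduce everything to the linear-algebraic reduction statement of Proposition~\ref{prop:linred1}, applied fiberwise and then combined with $G$-invariance, and then to verify that the three Courant algebroid axioms and the involutivity of $E_\red$ descend to the quotient. First I would observe that, since the action is principal, the quotient $Q/G$ is a manifold and $\AA_C/G$ and $E_C/G$ make sense as vector bundles over $Q/G$; here the hypothesis that $\ran(\varrho)$ is a \emph{closed} isotropic subbundle (Lemma~\ref{lem:closlem}) is what guarantees that $C=\ran(\varrho)^\perp$ is a closed coisotropic subbundle and that $\AA_C=C/C^\perp$ is a genuine metrized Hilbert vector bundle. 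The first half of the theorem, namely that $\AA_\red$ is a Courant algebroid, I would take as given from the cited reference \cite{bur:red}, or sketch by noting that the metric, bracket, and anchor all descend: the anchor descends because $\a(\ran\varrho)=\ran(\a\circ\varrho)$ consists of the generating vector fields, so $C^\perp$ is carried into the vertical tangent directions and $\AA_\red$ acquires an anchor into $T(Q/G)$.

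Granting $\AA_\red$, the heart of the second statement is that $E_\red=E_C/G$ is Lagrangian and involutive. For the Lagrangian property I would argue fiberwise: at each point the hypothesis that $E+C$ is a closed subbundle lets me invoke Proposition~\ref{prop:linred1}, which says precisely that $E_C=(E\cap C)/(E\cap C^\perp)$ is Lagrangian in $V_C=C/C^\perp$. Since $E$ and $C$ are both $G$-invariant, $E_C$ is a $G$-equivariant subbundle of $\AA_C$, so its quotient $E_\red$ is a Lagrangian subbundle of $\AA_\red$. The closedness hypothesis on $E+C$ is essential and is exactly what fails in Example~\ref{ex:hh}; I would flag that this closedness is also what ensures $E_C$ is a genuine (closed, locally trivial) subbundle rather than just a fiberwise Lagrangian subspace, so that the quotient by $G$ is well behaved.

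The remaining point is involutivity of $\Gamma(E_\red)$ under the reduced Courant bracket. The key step is to identify sections of $E_\red$ over $Q/G$ with $G$-invariant sections of $E\cap C$ over $Q$ that descend, i.e.\ whose values are taken modulo $E\cap C^\perp$. Concretely, a section of $E_\red$ can be represented, at least locally, by a $G$-invariant section $\sigma$ of $\AA$ taking values in $E\cap C$; the reduced bracket of two such classes is represented by the class of $\Cour{\sigma_1,\sigma_2}$. Because $\Gamma(E)$ is closed under $\Cour{\cdot,\cdot}$ (as $E$ is a Dirac structure) and because $C$ is coisotropic with $C^\perp=\ran(\varrho)$ generated by the $\Cour{\varrho(\xi),\cdot}$, one checks that $\Cour{\sigma_1,\sigma_2}$ again lies in $E\cap C$ modulo $C^\perp$, so the class is a well-defined section of $E_\red$. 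The main obstacle, and the step I would spend the most care on, is precisely this well-definedness of the reduced bracket: I must show that the bracket of invariant sections valued in $E\cap C$ lands back in $E\cap C$ up to $E\cap C^\perp$, independently of the choice of representatives, and that the generators $\varrho(\xi)$ act as inner derivations $\Cour{\varrho(\xi),\cdot}$ preserving $\Gamma(E)$ (using that $E$ is $G$-invariant). This is where infinite dimensions could bite, since one must be sure the relevant sums of subbundles stay closed so that the fiberwise reduction assembles into a smooth bundle map; but given the closedness of $E+C$ and the principality of the action, the standard descent argument of \cite{bur:red} carries over verbatim to the Hilbert setting.
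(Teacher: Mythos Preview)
Your proposal is correct and follows essentially the same approach as the paper. The paper itself does not give a full proof: it cites \cite{bur:red} and remarks that the finite-dimensional argument carries over, singling out exactly the two ingredients you emphasize---that $\Gamma(C)^G$ is closed under the Courant bracket with $\Gamma(C^\perp)^G$ a Courant ideal (so the bracket descends to $\Gamma(\AA_\red)=\Gamma(C)^G/\Gamma(C^\perp)^G$), and that $\a(C^\perp)$ lies in the $G$-orbit directions (so the anchor descends). Your fiberwise use of Proposition~\ref{prop:linred1} for the Lagrangian property of $E_C$, together with involutivity of $E$ and the ideal property of $C^\perp$, is precisely the content of the cited argument; your write-up is in fact more detailed than the paper's sketch.
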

This result was proved in \cite{bur:red} in the finite-dimensional
setting, and for the case of exact Courant algebroids. However, the
proof immediately carries over to the general case.  A key observation
 is that the space $\Gamma(C)^G$ is closed under
Courant bracket, containing $\Gamma(C^\perp)^G$ as a Courant
ideal. Hence the Courant bracket descends to
$\Gamma(\AA_\red)=\Gamma(C)^G/\Gamma(C^\perp)^G$. Further, since
$\a(C^\perp)\subseteq TQ$ lies in the $G$-orbit directions, we obtain
a reduced anchor map $\a_\red\colon {\AA}_\red\to T(Q/G)$.
\begin{remarks}\mbox{}\label{rmks:redugral}
  \begin{enumerate}
  \item The condition that $E+C$ be closed is trivially satisfied if
    $E\subseteq C$, i.e. $\ran(\varrho)\subseteq E$.     
    One then has $E_\red=(E/\ran(\varrho))/G$, and 
    $\a(E_\red)\subseteq T(Q/G)$ is the image of $\a(E)\subseteq
    TQ$ under the quotient.
  \item \label{rmk:normal} 
Suppose that the action of $G$ on $\AA$ extends to an action of a  Lie group $U\supseteq G$,
with $U$-equivariant generators $\varrho\colon \u\to \Gamma(\AA)$ extending those of $\g$. 
We assume that $G$ is a normal subgroup of $U$, and that $\l\varrho(\xi),\varrho(\zeta)\r=0$ for all $\xi\in\g,\ \zeta\in\u$. Then the $G$-reduced Courant algebroid
    $\AA_\red$ inherits an action of the quotient group
    $U/G$, with generators $\varrho_\red\colon \mf{u}/\g\to
    \Gamma(\AA_\red)$ induced from those on $\AA$.
  \item There is a natural Courant morphism $q\colon \AA\da 
    \AA_\red$,  where $x\sim_{q} y$ if and only if $x\in C$, with $y$ its image in $\AA_\red$. 
    If the $G$-invariant Dirac structure $E$ has the property $E+C=\AA$, then $q$ defines a strong Dirac morphism $q\colon (\AA,E)\da (\AA_\red,E_\red)$. In the opposite extreme, if 
    $\ran(\varrho)\subseteq E$, this is a Dirac \emph{co}morphism. 
\item The closed subbundle $\a^{-1}(\ran(\varrho_Q))\subseteq \AA$ decomposes as a direct sum 
$\ker(\a)\oplus C^\perp$; hence $\ker(\a)^\perp+C$ is again closed. 
If $\AA$ is exact, so that $\ker(\a)=\ran(\a^\star)$ is Lagrangian, this shows that 
$\ran(\a^\star)+C=\AA$, or equivalently $\a(C)=TQ$. Using these facts, we see that $\AA_\red$ is exact, 
 and the Courant morphism $q$ is exact as well. Furthermore, 
\[ q\colon (\AA, \,\ran(\a^\star))\da (\AA_\red,\,\ran(\a_\red^\star)).\]
is a strong Dirac morphism.
  \end{enumerate}
\end{remarks}


\subsection{Reduction of Dirac morphisms}
\label{subsec:equiv}
\alejandro{This subsection has changes, as it now considers two groups $G_1\neq G_2$. I took most of it from a previous version of the draft that incorporated this. (I had to revise some references to old-setted Thm \ref{th:Rred} across the text as well.)} $\AA_i\to Q_i$, $i=1,2$, be $G_i$-equivariant Courant algebroids over $Q_i$, with generators $\varrho_i\colon \g_i\to
\Gamma(\AA_i)$. A Courant morphism $R\colon \AA_1\da \AA_2$, with base map $\Phi:Q_1 \to Q_2$, is called
\emph{equivariant} with respect to a group homomorphism $f\colon
G_1\to G_2$ if
\[ x_1\sim_R x_2 \Rightarrow g\cdot x_1\sim_R f(g)\cdot x_2\]
for all $g\in G_1$ and $x_i\in \AA_i$. It \emph{intertwines the generators} if 
\begin{equation}\label{intertwgen} \varrho_1(\xi)\sim_R \varrho_2(f(\xi))\end{equation}
for all $\xi\in\g_1$. Let $\ran(\varrho_i)=\varrho_i(Q_i\times \g_i)\subseteq \AA_i$ denote the closed  subbundles spanned by the generators.

\begin{theorem}\label{th:Rred}
  Suppose in addition that  the generators for the $G_i$-actions are isotropic and that
  the $G_i$-actions on $Q_i$ are principal.  Then:
  \begin{enumerate}
  \item The Courant morphism $R$ descends to a Courant morphism
\[ \xymatrix{
 (\AA_1)_\red\ar@{-->}[r]_{R_\red}\ar[d] & (\AA_2)_\red\ar[d] \\
 Q_1/G_1\ar[r]_{\Phi_\red} & Q_2/G_2
 }\]  
   
   \item The morphism $R_\red$ has the property
   \[ q_2 \circ R = R_\red \circ q_1,\]
   where $q_i\colon \AA_i\da (\AA_i)_\red$ are the reduction morphisms. 
   
  \item Suppose the Courant algebroids $\AA_i$ are exact. Then, the reduction procedure gives a one-to-one
correspondence between Courant morphisms $(\AA_1)_\red\da (\AA_2)_\red
$, and equivariant Courant morphisms $\AA_1\da \AA_2$ intertwining the generators. 

 \end{enumerate}
\end{theorem}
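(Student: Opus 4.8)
The plan is to prove the three parts in order, building each on the reduction machinery of Theorem~\ref{th:bcg} and the morphism-composition results of Section~\ref{subsec:technical}. For part (a), the key object is the reduction morphism $q_i\colon \AA_i\da (\AA_i)_\red$ from Remark~\ref{rmks:redugral}(c): recall $x\sim_{q_i}y$ iff $x\in C_i$ and $y$ is its image in $(\AA_i)_\red$. The natural candidate for $R_\red$ is the composition $q_2\circ R\circ q_1^\top$, where $q_1^\top\colon (\AA_1)_\red\da \AA_1$ is the transpose relation. First I would verify that this composition is well-defined (transverse) as a Lagrangian relation, using the equivariance of $R$ and the intertwining condition \eqref{intertwgen}. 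Concretely, the intertwining condition $\varrho_1(\xi)\sim_R\varrho_2(f(\xi))$ forces $R$ to map $\ran(\varrho_1)$ into $\ran(\varrho_2)$ and, taking orthogonals (using $R^\top=R$ under the metric identification), to map $C_1=\ran(\varrho_1)^\perp$ compatibly with $C_2$. This is exactly what one needs so that $R$ descends to the quotients $C_i/C_i^\perp$; the $G_1$-equivariance combined with the homomorphism $f$ then allows passage to the further quotient by the group actions. The integrability (Courant) condition for $R_\red$ is inherited because, as noted after Theorem~\ref{th:bcg}, the invariant sections $\Gamma(C_i)^{G_i}$ are closed under bracket with $\Gamma(C_i^\perp)^{G_i}$ a Courant ideal, and $R$-relatedness of sections is preserved under this descent.

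For part (b), the identity $q_2\circ R=R_\red\circ q_1$ should follow essentially by construction once $R_\red$ is defined as above, since $q_1^\top\circ q_1$ acts as the appropriate projection. I would verify it at the level of the defining relation $\sim$: given $x_1\sim_{q_2\circ R}y_2$, unwind the definitions of $q_2$ and $R$ to produce an element of $(\AA_1)_\red$ that witnesses $x_1\sim_{q_1}\,\cdot\,\sim_{R_\red}y_2$, and conversely. Here I would lean on the uniqueness guaranteed by weak transversality (condition \eqref{eq:weaktrans}) so that the intermediate reduced element is unambiguous.

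For part (c), the exactness hypothesis is what upgrades the correspondence to a bijection. The forward direction (reduce a given equivariant, generator-intertwining morphism) is parts (a)--(b). For the reverse direction, I would use Proposition~\ref{prop:exact1}: since the $\AA_i$ are exact and, by Remark~\ref{rmks:redugral}(c), the $(\AA_i)_\red$ are also exact with exact reduction morphisms $q_i$, a Courant morphism $(\AA_1)_\red\da (\AA_2)_\red$ is equivalently a pair $(\Phi_\red,\om_\red)$ with $\d\om_\red=(\eta_1)_\red-\Phi_\red^*(\eta_2)_\red$. I would lift this data back to $Q_1$: the map $\Phi$ is determined by lifting $\Phi_\red$ compatibly with the principal bundle structures and the homomorphism $f$, and the 2-form $\om$ is the pullback of $\om_\red$ adjusted by the connection/splitting data, so that \eqref{eq:domega} holds upstairs. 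Exactness of $q_i$ (Remark~\ref{rmks:redugral}(c), which gives the strong Dirac morphisms $q_i\colon(\AA_i,\ran(\a_i^\star))\da((\AA_i)_\red,\ran(\a_{i,\red}^\star))$) is precisely what makes the lift $R=q_2^\top\circ R_\red\circ q_1$ a genuine Courant morphism rather than merely a relation, via Proposition~\ref{prop:automatic} on composition of strong Dirac morphisms.

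The main obstacle I anticipate is the reverse direction of part (c): establishing that the lifted relation $R=q_2^\top\circ R_\red\circ q_1$ is automatically equivariant with isotropic generators intertwined, and that the two constructions are mutually inverse. The transversality needed to compose $q_2^\top$ with $R_\red$ and with $q_1$ must be checked carefully in the infinite-dimensional setting, where sums of closed subspaces need not be closed; this is where I expect to invoke the strongness of the reduction morphisms from Remark~\ref{rmks:redugral}(c) together with Proposition~\ref{prop:automatic}, which guarantees that compositions of strong Dirac morphisms are again strong and transverse. Verifying that the intertwining condition \eqref{intertwgen} is recovered after lifting — i.e. that $R$ genuinely intertwines $\varrho_1$ and $\varrho_2$ — will require tracking how the generators sit inside $\ker(\a_i)^\perp$ and using the exactness to identify the ambiguity as closed 1-forms, matching the affine structure on generators described before Example~\ref{ex:lieposson}.
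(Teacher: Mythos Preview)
Your plan for parts (a)–(b) is workable but takes a different route from the paper, and your plan for part (c) has a genuine gap.

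\textbf{Part (a).} The paper does not construct $R_\red$ as the relation-composition $q_2\circ R\circ q_1^\top$. Instead it passes to the product Courant algebroid $\AA=\AA_2\times\ol{\AA}_1$ over $Q_2\times Q_1$ with generators $\varrho=\varrho_2\times\varrho_1$, treats $\gr(R)$ as a Dirac structure along $\gr(\Phi)$, takes its $G_2\times G_1$-flow-out $\wt{\gr}(R)$, and applies Theorem~\ref{th:bcg} to that. The key infinite-dimensional step is the closedness of $\gr(R)+\ran(\varrho)$ along $\gr(\Phi)$: using the intertwining relation $\varrho_1(\xi)\sim_R\varrho_2(f(\xi))$ to absorb the $\g_1$-generators, this sum equals $\gr(R)+(\ran(\varrho_2)\times\{0\})$, and closedness follows because the anchor is injective on the second summand (the $G_2$-action being principal). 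Your composition approach would recover the same $R_\red$, but it transfers precisely this closedness obstacle to the transversality of the compositions, which you flag but do not resolve. That is the substantive content of the argument, not a detail.

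\textbf{Part (c), reverse direction.} Here your sketch does not go through. Your proposed lift $R=q_2^\top\circ R_\red\circ q_1$ is not a Courant morphism: at the level of base maps it sends $m_1$ to the entire fiber $\pi_2^{-1}(\Phi_\red(\pi_1(m_1)))$, so the resulting relation is not supported on the graph of any map $\Phi\colon Q_1\to Q_2$. Relatedly, your assertion that ``$\Phi$ is determined by lifting $\Phi_\red$ compatibly with the principal bundle structures and $f$'' is unjustified; such $f$-equivariant lifts are in general neither unique nor guaranteed. The paper avoids this by fixing $\Phi$ and giving an explicit inverse: with $D=\a^{-1}(\gr(T\Phi))$ (coisotropic, with $D^\perp=\a^\star(\gr(T^\star\Phi))$) and $p\colon C\to\AA_\red$ the quotient, one has
\[
\gr(R)=\big(p^{-1}(\gr(R_\red))\big|_{\gr(\Phi)}\cap D\big)+D^\perp,
\]
which is Lagrangian (after the same closedness check as in (a)) and manifestly supported on $\gr(\Phi)$. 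Your $(\Phi,\omega)$ approach via Proposition~\ref{prop:exact1} could be made to work, but only after first fixing $\Phi$ and then showing the space of compatible $\omega$'s matches; it introduces connection and splitting choices that the paper's formula does not need.
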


\begin{proof}
\alejandro{The proof changed for $G_1\neq G_2$. There was a proof already typed for this case for items (a,b) in an older version of the draft. I took it and revised it. The proof of point (c) is completely new, but quite parallel to the one that was done for $G_1=G_2$.  Im quite sure it is ok, but it is good to double check this proof anyway.}
  Consider the $G_2\times G_1$-equivariant Courant algebroid $\AA = \AA_2\times\ol{\AA}_1$
  over $Q=Q_2\times Q_1$, with generators $\varrho =
  \varrho_2\times \varrho_1$. Let $\pi_i\colon Q_i\to Q_i/G_i$ be the quotient maps, write $\pi = \pi_2 \times \pi_1$ and $G= G_2 \times G_1$, and 
define  $C_i=\varrho_i(Q_i\times \g_i)^\perp$. The subbundle  $C = C_1\times C_2$ defines the reduction to $\AA_{red}= \AA_C/(G_2\times G_1) =(\AA_2)_{\on{red}}\times(\ol{\AA}_1)_{\on{red}}$.
  The graph
  $\on{gr}(\Phi)$ is invariant under the action of the diagonal subgroup $(G_1)_\Delta=\{(f(g),g)|g\in G_1\}\subset G$.  Let
  \[
  \wt{\on{gr}}(\Phi)=G\cdot\on{gr}(\Phi)=G\times_{(G_1)_\Delta}\on{gr}(\Phi)
  \]
  be the flow-out under the $G$-action. Then
  \[
  \wt{\on{gr}}(\Phi)/G=\on{gr}(\Phi)/(G_1)_\Delta=\on{gr}(\Phi_{\on{red}}).
  \]

  \begin{enumerate}
  \item By definition of a Dirac morphism, ${\gr(R)}$ is a Dirac structure along $\gr(\Phi)$: whenever two sections of $\AA$ restrict over $\gr(\Phi)$ to sections of ${\gr(R)}$, then so does their Courant bracket. Also, ${\gr(R)}$ is invariant under the action of $(G_1)_\Delta$ and its flow-out
    \[ 
    \wt{\on{gr}}(R)=G\cdot \on{gr}(R)\cong G\times_{(G_1)_\Delta} \on{gr}(R)\to
    \wt{\on{gr}}(\Phi)
    \]
    is a closed Lagrangian subbundle. Since its space of sections
    is generated by $\Gamma(\wt{\on{gr}}(R))^G\cong \Gamma(\on{gr}(R))^{(G_1)_\Delta}$,
    it is also involutive. Hence it is a Dirac structure along
    $\wt{\on{gr}}(\Phi)$.

    Along $\on{gr}(\Phi)$, the sum 
    \[
    {\gr(R)}+\ran(\varrho)|_{\gr(\Phi)}
    \]
    is a closed subbundle, for the following reason: since
    $\varrho_1(\xi)\sim_R \varrho_2(f(\xi))$ for all $\xi\in\g_1$, it
    coincides with the direct sum of closed subbundles
    $R\oplus \ran(\varrho_2)$, and this is mapped
    by the anchor to the closed subbundle $T\wt{\on{gr}}(\Phi) =
    T\on{gr}(\Phi)\oplus \ran(\a\circ \rho_2)$ in a way
    which preserves the direct sum decomposition and is injective on
    the second factor. As a result, its flow-out
    $\wt{\on{gr}}({R})+\ran(\varrho)|_{\wt{\on{gr}}(\Phi)}$ under the action
    of $G$ is also closed.  
    
It follows that $\wt{\on{gr}}({R})_C = (\wt{\on{gr}}(R)\cap C)/(\wt{\on{gr}}(R)\cap C^\perp)$ is a Lagrangian subbundle of $\AA_C$ along $\wt{\on{gr}}(\Phi)$ and hence that 
    \[\gr(R)_{\on{red}}= \wt{\on{gr}}(R)_C/G,\] is a Lagrangian subbundle of $\AA_{\on{red}} = \AA_C/G$ along the graph of
    $\Phi_{\on{red}}$.
To check integrability of ${\gr(R)}_\red$, it is enough to argue locally. Let $\sig,\sig'$ be sections of $\AA_\red$ defined near $(\pi_2(\Phi(m)),\pi_1(m))$, and restricting to sections of ${\gr(R)}_\red$. Using local triviality of the principal bundle, these lift to $G$-invariant  sections $\hat{\sigma},\hat{\sigma}'$ of $C\subseteq \AA$, defined near $(\Phi(m),m)$, and restricting to sections of ${\wt{\gr}(R)}$. The Courant bracket $\Cour{\hat{\sigma},\hat{\sigma}'}$ has the same property, by integrability of both $C$ and ${\wt{\gr}(R)}$. Hence $\Cour{\sigma,\sigma'}$ descends to a section of ${\gr(R)}_\red$.

  \item Let $x_1\in \AA_1$ and $y_2\in
    (\AA_2)_{\on{red}}$; we must show that
    $x_1\sim_R x_2\sim_{q_2} y_2$ for some $x_2\in \AA_2$
    if and only if $x_1\sim_{q_1}y_1\sim_{R_{\on{red}}}y_2$ for some
    $y_1\in (\AA_1)_{\on{red}}$. Given the latter
    property, since $y_1\sim_{R_{\on{red}}}y_2$ the definition of
    $R_{\on{red}}$ gives $\ti{x}_1\sim_R \ti{x}_2$ for some
    $\ti{x}_i\in \AA_i$ with $\ti{x}_i\sim_{q_i} y_i$. The difference
    $\ti{x}_1-x_1$ is $q_1$-related to $0$, hence it is of the form
    $\varrho_1(\xi_1)|_m$ for some $\xi_1\in\g_1$.  Put
    $x_2=\ti{x}_2-\varrho_2(f(\xi_1))$. Then $x_1\sim_R x_2\sim_{q_2}
    y_2$ as desired. Conversely, given $x_2$ with this property,
    so that $x_2\in C_2$, the condition $x_1\sim_R x_2$ implies that
    for all $\xi_1\in\g_1$, $\l x_1,\varrho_1(\xi_1)\r=\l
    x_2,\varrho_2(f(\xi_1))\r=0$.  Hence $x_1\in C_1$, which determines an element 
    $y_1$ with $x_1\sim_{q_1} y_1$.  By definition of
    $R_{\on{red}}$, the property $x_1\sim_R x_2$ descends to
    $y_1\sim_{R_{\on{red}}} y_2$.

\item Given $R$, we show how to express $R$ in terms of $R_\red$.  
Let $p_i\colon C_i\to\AA_i$ be the quotient maps, and $p=p_2\times p_1$.  
The pre-image $p^{-1}(\gr(R_\red))$ is a Lagrangian subbundle along $\pi^{-1}(\gr(\Phi_\red))=
\wt{\gr}(\Phi)$. 
Its intersection with $\a^{-1}(\gr(T\Phi))$ is contained in $\gr(R)$. 
Since $\AA$ is an exact Courant algebroid, $D=\a^{-1}(\gr(T\Phi))$ is 
a closed coisotropic subbundle; the orthogonal bundle is $D^\perp=\a^\star(\gr(T^\star\Phi))$. 
Note that $p^{-1}(\gr(R_\red))|_{\gr(\Phi)}+D=\ran(\rho)|_{\gr(\Phi)}+D$ is closed, by the same reasoning as in part (a). 
Reducing $p^{-1}(\gr(R_\red))|_{\gr(\Phi)}$ with respect to $D$, and then taking the inverse image under the quotient map $D\to D/D^\perp$, we obtain a Lagrangian subbundle
\begin{equation}\label{eq:RR2}
 (p^{-1}(\gr(R_\red))|_{\gr(\Phi)}\cap D)+D^\perp
\end{equation}
along $\gr(\Phi)$. 
Since both summands lie in $\gr(R)$, the sum \eqref{eq:RR2} is in fact equal to $\gr(R)$.  
Conversely, if the Courant morphism $R_\red$ is given, we can take \eqref{eq:RR2} as the definition of $R$. 
This $R$ is $(G_1)_\Delta$-equivariant and intertwines the generators, and an argument similar to (a) shows that it is integrable.  
\qedhere
%

  \end{enumerate}
\end{proof}

\alejandro{Added this sentence and the corresponding first sentence in the next Prop. From this point on, I made are no other changes in this subsection.}For the rest of this Section we shall focus on the case in which there a single group $G$ acting on both $\AA_i$ and equivariance holds with respect to the identity map.

\begin{proposition}\label{prop:Diracred}
Consider the setting of Theorem \ref{th:Rred} with $G_1=G_2=G$ and $f=id_G$.  Suppose $E_i\subseteq \AA_i$ are $G$-invariant Dirac
structures such $E_i+\ran(\varrho_i)$ are closed
    subbundles,  and that  $R$ defines a Dirac morphism $R\colon (\AA_1,E_1)\da (\AA_2,E_2)$.  
    Suppose also that for all $m\in Q_1,\ \xi\in\g$, 
\begin{equation}\label{eq:technical}
\varrho_2(\xi)_{\Phi(m)}\in E_2 \Rightarrow \varrho_1(\xi)_m\in E_1.
\end{equation}
    Then $R_\red$ defines a Dirac morphism 
    \[ R_\red\colon ((\AA_1)_\red,(E_1)_\red)\da
    ((\AA_2)_\red,(E_2)_\red).\]
If the composition $R\circ E_1$ is transverse, then so is the composition $R_\red\circ (E_1)_\red$. 
\end{proposition}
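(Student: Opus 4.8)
The plan is to prove that $R_\red$ is a Dirac morphism by verifying the defining condition: for each point in the base $Q_1/G_1$, every element of $(E_2)_\red$ over $\Phi_\red(\bar m)$ is $R_\red$-related to a \emph{unique} element of $(E_1)_\red$. The natural strategy is to lift everything to the unreduced picture and use the already-established compatibility $q_2\circ R=R_\red\circ q_1$ from Theorem \ref{th:Rred}(b), together with the hypothesis that $R\colon(\AA_1,E_1)\da(\AA_2,E_2)$ is itself a Dirac morphism. First I would fix $\bar y_2\in(E_2)_\red$ over $\bar m=\pi_1(m)$ and lift it to $x_2\in E_2\cap C_2$ over $\Phi(m)$, so that $x_2\sim_{q_2}\bar y_2$; the point is that $(E_2)_\red=(E_2\cap C_2)/(E_2\cap C_2^\perp)$, so such a lift exists. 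Since $R$ is a Dirac morphism, there is a unique $x_1\in (E_1)_{m}$ with $x_1\sim_R x_2$. The condition $x_2\in C_2$ means $\langle x_2,\varrho_2(\xi)\rangle=0$ for all $\xi$; then the intertwining relation $\varrho_1(\xi)\sim_R\varrho_2(\xi)$ together with $x_1\sim_R x_2$ gives $\langle x_1,\varrho_1(\xi)\rangle=\langle x_2,\varrho_2(\xi)\rangle=0$, so $x_1\in C_1$. Hence $x_1$ descends to an element $\bar y_1\in(E_1)_\red$ with $\bar y_1\sim_{R_\red}\bar y_2$, establishing existence.

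For uniqueness I would argue at the reduced level using weak transversality, i.e.\ show $(E_1)_\red\cap\ker(R_\red)=0$. Suppose $\bar y_1\in(E_1)_\red$ satisfies $\bar y_1\sim_{R_\red}0$. Lift $\bar y_1$ to $x_1\in E_1\cap C_1$; by the relation $q_2\circ R=R_\red\circ q_1$, the element $x_1$ is $R$-related to some $x_2\in C_2$ with $x_2\sim_{q_2}0$, meaning $x_2\in C_2^\perp=\ran(\varrho_2)$. Write $x_2=\varrho_2(\xi)_{\Phi(m)}$ for some $\xi\in\g$. Since $x_1\in E_1$ and $R$ is a Dirac morphism (hence $x_2\in E_2$ by the $R$-relation), we get $\varrho_2(\xi)_{\Phi(m)}\in E_2$, and here is where the technical hypothesis \eqref{eq:technical} is essential: it yields $\varrho_1(\xi)_m\in E_1$. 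Now $x_1-\varrho_1(\xi)_m$ is $R$-related to $0$ (both $x_1\sim_R x_2=\varrho_2(\xi)$ and $\varrho_1(\xi)\sim_R\varrho_2(\xi)$), and lies in $E_1$; by weak transversality of $R\circ E_1$ it must vanish, so $x_1=\varrho_1(\xi)_m\in\ran(\varrho_1)=C_1^\perp$. Therefore $x_1$ descends to $\bar y_1=0$, giving the desired weak transversality for $R_\red$.

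Finally, for the transversality clause: assuming $R\circ E_1$ is transverse, i.e.\ $E_1+\ran^\star(R)=\AA_1$, I would show $(E_1)_\red+\ran^\star(R_\red)=(\AA_1)_\red$. The cleanest route is to relate $\ran^\star(R_\red)$ to the reduction of $\ran^\star(R)$: since $R_\red$ is obtained by reducing $\gr(R)$ with respect to the coisotropic $C=C_2\times C_1$, and reduction of the range commutes appropriately with the quotient $C_1\to C_1/C_1^\perp$, one has that $x_1\in\ran^\star(R)\cap C_1$ descends into $\ran^\star(R_\red)$. Given an arbitrary $\bar z\in(\AA_1)_\red$, lift it to $z\in C_1$; by transversality write $z=e+r$ with $e\in E_1$, $r\in\ran^\star(R)$. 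The obstruction is that $e$ and $r$ need not individually lie in $C_1$. Here the hypotheses that $E_1+\ran(\varrho_1)$ is closed and that the generators are isotropic let one adjust $e$ and $r$ by an element of $\ran(\varrho_1)=C_1^\perp$ so that both land in $C_1$; this uses the isotropy of $\ran(\varrho_1)$ and the fact that $\ran(\varrho_1)\subseteq C_1$. After this adjustment $e$ descends into $(E_1)_\red$ and $r$ into $\ran^\star(R_\red)$, summing to $\bar z$.

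\emph{The main obstacle} I expect is the transversality clause, specifically the closedness/decomposition bookkeeping needed to push the splitting $z=e+r$ down to the quotient: one must ensure that the correction terms used to force $e,r\in C_1$ genuinely live in $\ran(\varrho_1)=C_1^\perp$ and do not disturb membership in $E_1$ or $\ran^\star(R)$. The hypothesis \eqref{eq:technical}, the closedness of $E_1+\ran(\varrho_1)$, and the isotropy of the generators are precisely the ingredients that make this go through, and verifying their interaction carefully is where the real work lies; the existence and weak-transversality parts are comparatively formal once $q_2\circ R=R_\red\circ q_1$ and the intertwining of generators are in hand.
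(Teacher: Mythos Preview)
Your existence and uniqueness arguments are correct and essentially identical to the paper's proof: lift to $E_2\cap C_2$, use $R$ being Dirac to get $x_1\in E_1$, use the intertwining $\varrho_1(\xi)\sim_R\varrho_2(\xi)$ to see $x_1\in C_1$; for uniqueness, the technical hypothesis \eqref{eq:technical} plus weak transversality of $R\circ E_1$ force $x_1=\varrho_1(\xi)\in C_1^\perp$.

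The transversality clause, however, has a genuine gap. Your proposed mechanism---``adjust $e$ and $r$ by an element of $\ran(\varrho_1)=C_1^\perp$ so that both land in $C_1$''---cannot work: since the generators are isotropic, $\ran(\varrho_1)\subseteq C_1$, so for any $\xi$ and any $e$ one has $\langle e-\varrho_1(\xi),\varrho_1(\zeta)\rangle=\langle e,\varrho_1(\zeta)\rangle$. Adding an element of $\ran(\varrho_1)$ never changes whether something lies in $C_1$.

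The paper's argument is different and works on the $\AA_2$ side. Having written $u_1=x_1+a_1\in C_1$ with $x_1\in E_1$, $a_1\in\ran^\star(R)$, choose $a_2$ with $a_1\sim_R a_2$. The key step is a pairing computation: for every $w_2\in E_2\cap\ran(\varrho_2)$ the technical hypothesis \eqref{eq:technical} produces $w_1\in E_1\cap\ran(\varrho_1)$ with $w_1\sim_R w_2$, and then $\langle a_2,w_2\rangle=\langle a_1,w_1\rangle=\langle u_1,w_1\rangle-\langle x_1,w_1\rangle=0$ (using $u_1\in C_1$, $w_1\in\ran(\varrho_1)$, and $x_1,w_1\in E_1$ Lagrangian). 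Hence $a_2\in(E_2\cap\ran(\varrho_2))^\perp=E_2+C_2$, where the last equality uses the hypothesis that $E_2+\ran(\varrho_2)$ is closed. Now one modifies $x_1$ by the unique $E_1$-element $R$-related to the $E_2$-component of $a_2$ (an adjustment by $E_1$, \emph{not} by $\ran(\varrho_1)$), making the new $a_2\in C_2$. The compatibility $q_2\circ R=R_\red\circ q_1$ then forces the new $a_1\in C_1$, and hence also the new $x_1=u_1-a_1\in E_1\cap C_1$; both descend as required.
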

\begin{proof}
  We have to show that every $y_2\in \Phi_\red^*(E_2)_{\red}$ is $R_\red$-related to  
 a unique element $y_1\in (E_1)_\red$. Let $x_2\in E_2\cap C_2$ be a lift of $y_2$. 
 Since $R$ is a Dirac morphism, there exists $x_1\in E_1$ with $x_1\sim_R x_2$. 
 This element satisfies $\l x_1,\varrho_1(\xi)\r=\l x_2,\varrho_2(\xi)\r=0$ for all $\xi\in \g$, hence 
 $x_1\in C_1$. Letting $y_1\in (E_1)_\red$ be the image, we get $y_1\sim_{R_\red} y_2$. 
 For uniqueness, suppose $y_1\in (E_1)_\red$ satisfies 
    $y_1\sim_{R_\red} 0$. Choose elements $x_i\in C_i\cap E_i$ with 
    $x_1\sim_R x_2$, $x_1\sim_{q_1} y_1$ and $x_2\sim_{q_2} 0$. The last condition gives
$x_2=\varrho_2(\xi)_{\Phi(m)}$ for some $\xi\in\g$.   Then $x_1\sim_R  x_2$ but also 
$\varrho_1(\xi)\sim_R x_2$. By assumption \eqref{eq:technical}, and since $R$ is a Dirac morphism, this implies $x_1=\varrho_1(\xi)$. Hence  $y_1=0$. 

Suppose that the composition of $R$ with $E_1$ is transverse, that is, $\ran^\star(R)+E_1=\AA_1$. 
We want to prove $\ran^\star(R_\red)+(E_1)_\red=(\AA_1)_\red$. Given $v_1\in    (\AA_1)_\red$, 
let  $u_1\in C_1$ be a preimage. Write $u_1=x_1+a_1$ with   $x_1\in E_1$ and 
$a_1\in \ran^\star(R)$. Then  $a_1\sim_R a_2$ for some $a_2\in   \AA_2$. By assumption, for  all $w_2\in E_2\cap \ran(\varrho_2)$ there exists $w_1\in E_1\cap \ran(\varrho_1)$ with $w_1\sim_R w_2$. Therefore, 
\[ \l a_2,w_2\r=\l a_1,w_1\r=0\]
for all $w_2\in E_2\cap \ran(\varrho_2)$, which proves $a_2\in E_2+C_2$. Modifying the element $x_1$, we may   arrange that the $E_2$-component of $a_2$ is zero. Hence $a_2\in   C_2$ descends to an element $b_2\in (\AA_2)_\red$.  Using part (b) from Theorem \ref{th:Rred}, the   property $a_1\sim_R a_2\sim_{q_2} b_2$ 
shows the existence of an element $b_1$ with $a_1\sim_{q_1} b_1\sim_{R_\red} b_2$. 
In particular, $b_1\in \ran^\star(R_\red)$. It also follows that $a_1\in C_1$, and hence 
$x_1=u_1-a_1\in E_1\cap C_1$. Letting $y_1\in (E_1)_\red$ be its  image, we obtain $v_1=y_1+b_1$.  
\end{proof}

\begin{remark}\label{rem:Rred}
Condition \eqref{eq:technical} is automatic in the following two cases 
\begin{enumerate}
\item $\ran(\varrho_2)\cap E_2=0$, 
\item $\ran(\varrho_1)\subseteq E_1$. 
\end{enumerate}
As a special case, suppose $R\colon (\T M,TM)\da (\AA,E)$ is a
$G$-equivariant Hamiltonian space, with base map $\Phi\colon
M\to Q$, and where the $G$-action on $\T M$ is the standard
lift of a $G$-action on $M$. If the $G$-action on  $Q$ 
is a principal action, then by equivariance the action on $M$ is again
a principal action.  Hence we obtain a Hamiltonian space
$R_\red\colon (\T (M/G),T(M/G))\da
(\AA_\red,E_\red)$. 
\end{remark}

\subsection{Reduction of exact Courant algebroids}
\label{subsec:exact-red}
Suppose $\AA\to Q$ is a $G$-equivariant exact Courant algebroid, and let $j\colon
TQ\to \AA$ be a $G$-equivariant isotropic splitting, identifying $\AA\cong \T Q_\eta$ 
for a $G$-invariant closed 3-form $\eta\in \Omega^3(Q)$. The following result describes isotropic generators 
for the action in terms of the splitting. Recall that the Cartan complex of equivariant  differential forms on $Q$ is the space of $G$-equivariant polynomial maps $\beta\colon \g\to \Omega(Q)$, with the 
equivariant differential 
\[ (\d_G\beta)(\xi)=\d\beta(\xi)-\iota(\xi_Q)\beta(\xi).\]
\begin{proposition}\label{prop:burdi}
 \cite{bur:red}  
For $\AA\cong \T Q_\eta$ as above, there is a 1-1 correspondence between $G$-equivariant isotropic generators $\varrho\colon \g\to \Gamma(\AA)$ and closed equivariant extensions 
\begin{equation}\label{eq:eqetaform}
\eta_G(\xi)=\eta+\alpha(\xi)
\end{equation}
of the 3-form $\eta$. That is, $\alpha\colon \g\to \Omega^1(Q)$ is a $G$-equivariant map
such that $\d_G\eta_G=0$.  Under this correspondence, 
\begin{equation}\label{eq:alpha}
  \varrho(\xi)=j(\xi_Q)+\a^\star (\alpha(\xi)).
\end{equation}
 Changing the splitting by an invariant 2-form $\varpi\in \Omega^2(Q)^G$
  modifies $\eta_G$ to $\eta_G'=\eta_G+\d_G\varpi$. 
\end{proposition}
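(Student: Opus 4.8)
The plan is to work entirely in the splitting $\AA\cong \T Q_\eta$ determined by the $G$-equivariant isotropic splitting $j$, matching the two defining properties of an isotropic generator against the two homogeneous components (in form degree) of the single equivariant closedness equation $\d_G\eta_G=0$. First I would pin down the shape of a generator: any generator satisfies $\a(\varrho(\xi))=\xi_Q$, so in the splitting its $TQ$-component is forced to be $j(\xi_Q)$, and writing the remaining component as $\a^\star(\alpha(\xi))$ produces a unique map $\alpha\colon\g\to\Omega^1(Q)$, which is precisely the parametrization \eqref{eq:alpha}. Since $j$, $\a^\star$ and $\xi\mapsto\xi_Q$ are all $G$-equivariant, $\varrho$ is $G$-equivariant if and only if $\alpha$ is, so it remains only to translate the inner-derivation equation and the isotropy condition into conditions on $\alpha$.

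Next I would translate the generator equation $\xi_\AA=\Cour{\varrho(\xi),\cdot}$. Using the description of $\on{Der}(\T Q_\eta)$ recalled above, the inner derivation generated by $\varrho(\xi)=j(\xi_Q)+\a^\star(\alpha(\xi))$ is the pair $(\xi_Q,\varepsilon_\xi)$ whose $2$-form part $\varepsilon_\xi$ is built from $\d\alpha(\xi)$ and $\iota_{\xi_Q}\eta$. Because $j$ is $G$-equivariant, the chosen lift of the $G$-action preserves the splitting, so the action derivation $\xi_\AA$ corresponds to the pair $(\xi_Q,0)$; hence the generator equation is equivalent to the vanishing of that $2$-form, which is exactly the form-degree-$2$ component of $\d_G\eta_G$. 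For the isotropy condition, the metric on $\T Q_\eta$ gives $\l \varrho(\xi),\varrho(\zeta)\r=\iota_{\xi_Q}\alpha(\zeta)+\iota_{\zeta_Q}\alpha(\xi)$; since $\alpha$ is linear in $\xi$, this bilinear expression is the polarization of the function $\iota_{\xi_Q}\alpha(\xi)$, which is exactly the form-degree-$0$ component of $\d_G\eta_G$. Thus $\varrho$ is an isotropic generator if and only if both homogeneous parts of $\d_G\eta_G$ vanish, i.e.\ $\d_G\eta_G=0$, giving the desired bijection. I would also note that bracket-preservation of $\varrho$ is not an extra requirement: it follows by differentiating the $G$-equivariance relation and invoking the generator equation just established.

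Finally, for the dependence on the splitting I would substitute the translated splitting $j'(v)=j(v)+\a^\star(\iota_v\varpi)$ of \eqref{eq:jprime} into the parametrization \eqref{eq:alpha}. Writing $\varrho(\xi)=j'(\xi_Q)+\a^\star(\alpha'(\xi))$ forces $\alpha'(\xi)=\alpha(\xi)-\iota_{\xi_Q}\varpi$, while the associated $3$-form becomes $\eta'=\eta+\d\varpi$; combining these, $\eta_G'(\xi)=\eta'+\alpha'(\xi)=\eta_G(\xi)+\d\varpi-\iota_{\xi_Q}\varpi=\eta_G(\xi)+(\d_G\varpi)(\xi)$, as claimed.

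I expect the main obstacle to be bookkeeping rather than anything conceptual: one must verify carefully that $G$-equivariance of $j$ genuinely forces the $2$-form part of the action derivation $\xi_\AA$ to vanish (so that $\xi_\AA$ corresponds to $(\xi_Q,0)$), and one must track signs consistently through the inner-derivation formula and the definition of $\d_G$. The one genuinely non-formal step is the polarization identity identifying the bilinear isotropy pairing with the quadratic form-degree-$0$ component of $\d_G\eta_G$, which relies essentially on $\varrho$, equivalently $\alpha$, being linear in $\xi$.
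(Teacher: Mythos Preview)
The paper does not give its own proof of this proposition; it simply attributes the result to \cite{bur:red}. Your argument is correct and is the standard one: parametrize candidate generators via the splitting as in \eqref{eq:alpha}, then match the generator equation with the degree-$2$ component of $\d_G\eta_G=0$ and the isotropy condition with the degree-$0$ component. That this decomposition is the right one is confirmed by the Remark immediately following the proposition, which states that for generators that are not necessarily isotropic one has $\d_G\eta_G(\xi)=\tfrac12\langle\varrho(\xi),\varrho(\xi)\rangle$; in other words, the generator condition alone kills the degree-$2$ part, and isotropy is precisely the vanishing of the remaining degree-$0$ part. Your change-of-splitting computation is also correct and matches the paper's conventions from \eqref{eq:jprime}. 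As you anticipate, the only genuine care required is consistent sign-tracking between the $\eta$-term in the Courant bracket, the convention for the generating vector fields $\xi_Q$, and the sign in $\d_G$; none of this affects the structure of the argument.
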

\begin{remark}
If the generators are not necessarily isotropic, one finds instead that $\d_G\eta_G(\xi)=\hh \l\varrho(\xi),\varrho(\xi)\r$. 
\end{remark}

We now make the additional assumption that the $G$-action
on $Q$ is principal, as in Theorem \ref{th:bcg}, with quotient map
$\pi\colon Q\to Q/G$. Suppose isotropic generators $\varrho\colon \g\to \Gamma(\AA)$ are given. 
\begin{definition}
  An isotropic splitting $j\colon TQ\to \AA$ is called
  \emph{$\g$-horizontal} if $\varrho(Q\times \g)\subseteq j(TQ)$, or
  equivalently $ \varrho(\xi)=j(\xi_Q)$ for all $\xi\in\g$. It is
  called \emph{$G$-basic} if it is both $G$-invariant and
  $\g$-horizontal.
\end{definition}
Thus, an invariant isotropic splitting is $G$-basic if and only if $\alpha=0$. 
There is a 1-1 correspondence between $G$-basic splittings
$j$ of $\AA$ and isotropic splittings $j_\red$ of
$\AA_\red$.  Under this correspondence, $j(TQ)$ is the pre-image
of $j_\red(T(Q/G))$ under the quotient map. The three-form of a 
$G$-basic splitting $j$ coincides with its equivariant extension, 
and equals the pullback of the
three-form of the reduced splitting $j_\red$:
\begin{equation*}
  \eta = \pi^*\eta_\red.
\end{equation*}
\begin{proposition}\label{prop:basplit}
  Let $Q\to Q/G$ be a principal $G$-bundle with connection
  $\theta\in\Omega^1(Q,\g)^G$.  Let $\AA\to Q$ be a $G$-equivariant
  Courant algebroid with isotropic generators, and let $j\colon TQ\to
  \AA$ be a $G$-invariant isotropic splitting.  
   Put
  \begin{equation}\label{eq:varpi} 
  \varpi=-\alpha(\theta)+\hh c(\theta,\theta)\in \Omega^2(Q)^G,
  \end{equation}
  where $\alpha$ is given by \eqref{eq:alpha}, and $c(\xi,\xi')=\iota(\xi_Q)\alpha(\xi')\in C^\infty(Q)$.  Twisting the
  splitting $j$ by $\varpi$, we obtain a $G$-basic splitting. The resulting 3-form on $Q/G$ satisfies 
 \begin{equation}\label{eq:neweta}
 \pi^*\eta_\red=\eta+\d\varpi.\end{equation}
\end{proposition}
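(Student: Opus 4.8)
The plan is to verify directly that the twisted splitting $j'$, defined by $j'(v) = j(v) + \a^\star(\iota_v\varpi)$ as in \eqref{eq:jprime}, is $G$-basic, and then to read off the reduced three-form from the general twisting formula together with the correspondence recorded just before the proposition. Recall from \eqref{eq:alpha} that the isotropic generators are $\varrho(\xi) = j(\xi_Q) + \a^\star(\alpha(\xi))$, so that in the new splitting $\varrho(\xi) = j'(\xi_Q) + \a^\star\big(\alpha(\xi) - \iota(\xi_Q)\varpi\big)$; hence $\g$-horizontality of $j'$ is exactly the identity $\iota(\xi_Q)\varpi = \alpha(\xi)$ for all $\xi\in\g$.

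First I would record the structural fact about $\alpha$ that makes everything work. Writing out $\d_G\eta_G = 0$ for $\eta_G(\xi) = \eta + \alpha(\xi)$ and separating by polynomial degree in $\xi$, the terms of polynomial degree two give $\iota(\xi_Q)\alpha(\xi) = 0$ for all $\xi$; by polarization this says exactly that $c(\xi,\xi') = \iota(\xi_Q)\alpha(\xi')$ is antisymmetric in $(\xi,\xi')$. This antisymmetry is the crux of the computation. The $G$-equivariance of $\alpha$ (hence of $c$) together with the $G$-invariance of the connection $\theta$ moreover show that $\varpi = -\alpha(\theta) + \hh c(\theta,\theta)$ is a genuine $G$-invariant two-form, so that the twist $j'$ is again $G$-invariant and, by \eqref{eq:jprime}, automatically isotropic.

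The heart of the argument is the identity $\iota(\xi_Q)\varpi = \alpha(\xi)$. Using the defining property $\iota(\xi_Q)\theta = \xi$ of the connection and contracting each wedge-factor, one gets (in a basis $\{e_a\}$ of $\g$, with the ordering convention $\alpha(\theta) = \alpha(e_a)\wedge\theta^a$) that $-\iota(\xi_Q)\alpha(\theta) = \alpha(\xi) - c(\xi,e_a)\theta^a$, while $\hh\iota(\xi_Q)c(\theta,\theta) = c(\xi,e_a)\theta^a$. Here precisely the antisymmetry of $c$ is what turns the naive $\hh\big(c(\xi,e_a) - c(e_a,\xi)\big)\theta^a$ into $c(\xi,e_a)\theta^a$, cancelling the coefficient $\hh$; adding the two contributions leaves $\alpha(\xi)$. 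By the remark in the first paragraph this yields $\varrho(\xi) = j'(\xi_Q)$, i.e. $\varrho(Q\times\g)\subseteq j'(TQ)$, so $j'$ is $\g$-horizontal; being also $G$-invariant, it is $G$-basic.

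Finally, the three-form of $j'$ is $\eta' = \eta + \d\varpi$ by \eqref{eq:jprime}. Since $j'$ is $G$-basic, the correspondence recorded just before the statement identifies $\eta'$ with the pullback $\pi^*\eta_\red$ of the three-form of the induced reduced splitting $j'_\red$ on $\AA_\red$, giving $\pi^*\eta_\red = \eta + \d\varpi$ as claimed. The one step I expect to require genuine care is the contraction computation of $\iota(\xi_Q)\varpi$: it depends on fixing consistent sign and ordering conventions for the substitutions $\alpha(\theta)$ and $c(\theta,\theta)$, and the coefficient $\hh$ in $\varpi$ comes out correctly only because of the antisymmetry of $c$ established at the outset.
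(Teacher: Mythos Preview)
Your proof is correct and follows essentially the same approach as the paper: both reduce the claim to the identity $\iota(\xi_Q)\varpi=\alpha(\xi)$, verify it by direct contraction, and then read off $\eta'=\eta+\d\varpi=\pi^*\eta_\red$ from the correspondence between $G$-basic splittings and reduced splittings. The paper's proof is terser---it simply writes $\iota(\xi_Q)\varpi=-(\iota(\xi_Q)\alpha)(\theta)+\alpha(\xi)+c(\xi,\theta)=\alpha(\xi)$---but implicitly uses the antisymmetry of $c$ in the step $\hh\iota(\xi_Q)c(\theta,\theta)=c(\xi,\theta)$, which you make explicit and derive from $\d_G\eta_G=0$. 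Your added remarks on why $\varpi$ is $G$-invariant and why the coefficient $\hh$ is forced by the antisymmetry are welcome clarifications rather than a different method.
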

\begin{proof}
  The $\varpi$-twisted splitting $j'$ is given by \eqref{eq:jprime},
  and the corresponding 1-forms $\alpha'(\xi)$ are
  $\alpha'(\xi)=\alpha(\xi)-\iota(\xi_Q)\varpi$. But
  \[ \iota(\xi_Q)\varpi
  =-(\iota(\xi_Q)\alpha)(\theta)+\alpha(\xi)+c(\xi,\theta)=\alpha(\xi).\]
  Thus $\alpha'(\xi)=0$.
\end{proof}
\begin{remark}
Note that if the splitting $j$ was $G$-basic to begin with, then $\varpi=0$, and hence $j'=j$, for any choice of connection $\theta$.
\end{remark}

\begin{remark}
The 2-form $\varpi$ also appears in the context of lifting the structure group of the principal bundle to a central extension by $\U(1)$. See Appendix \ref{sec:brylinski}. 
\end{remark}

The reduction of an exact Courant morphism is again exact:
\begin{proposition}\label{prop:Diracred1}
In the setting of Theorem \ref{th:Rred}  with $G_1=G_2=G$ and $f=id_G$,  suppose that Courant algebroids $\AA_i$ and the 
Courant morphism $R$ are exact. Then so are $(\AA_i)_\red$ and 
the Courant morphism $R_\red$.
\end{proposition}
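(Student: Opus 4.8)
The plan is to treat the two exactness claims separately. That the reduced algebroids $(\AA_i)_\red$ are exact, and that the reduction morphisms are exact strong Dirac morphisms
\[ q_i\colon (\AA_i,\ran(\a_i^\star))\da ((\AA_i)_\red,\ran((\a_i)_\red^\star)), \]
is precisely the content of the final item of Remarks~\ref{rmks:redugral}, so nothing new is needed there. For $R_\red$, I will appeal to Proposition~\ref{prop:alleq} and show directly that $R_\red$ is \emph{full}, i.e.\ that the anchor $\gr(R_\red)\to \gr(T\Phi_\red)$ is surjective; this is equivalent to exactness. Conceptually this is a ``cancellation'': since $R$ is exact it is strongly Dirac by Proposition~\ref{prop:alleq}, so by Proposition~\ref{prop:automatic} and the identity $q_2\circ R=R_\red\circ q_1$ of part (b) of Theorem~\ref{th:Rred} the composite $R_\red\circ q_1$ is strongly Dirac; fullness of $R_\red$ is the concrete way to cancel the (surjective) morphism $q_1$.

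Fullness is pointwise, so I would fix $m\in Q_1$, set $n=\Phi(m)$, and work over $\bar m=\pi_1(m)$ and $\bar n=\pi_2(n)=\Phi_\red(\bar m)$. Given $(w_2,w_1)\in\gr(T_{\bar m}\Phi_\red)$, choose a lift $\hat w_1\in T_mQ_1$ of $w_1$ and set $\hat w_2=T_m\Phi(\hat w_1)$, which projects to $w_2$. Since $R$ is full, the pair $(\hat w_2,\hat w_1)\in\gr(T\Phi)$ lifts to some $(x_2,x_1)\in\gr(R)_{(n,m)}$ with $\a_i(x_i)=\hat w_i$. In general $x_i\notin C_i$, and the heart of the argument is to correct $(x_2,x_1)$ into $C=C_2\times C_1$ without disturbing its anchor. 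From the defining exact sequence of $R$ one has $\ker\big(\a|_{\gr(R)}\big)=\a^\star(\gr(T^\star\Phi))$, and by Lemma~\ref{lem:27} a modification $(x_2,x_1)\mapsto(x_2+\a_2^\star\mu_2,\,x_1+\a_1^\star\mu_1)$ with $\mu_1=(T_m\Phi)^\star\mu_2$ stays inside $\gr(R)$ and leaves both anchors unchanged, because $\a_i\circ\a_i^\star=0$ in an exact algebroid.

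It remains to solve for $\mu_2\in T_n^\star Q_2$. Intertwining of generators, $\varrho_1(\xi)\sim_R\varrho_2(\xi)$ (here $f=\mathrm{id}$), gives $\langle x_1,\varrho_1(\xi)\rangle=\langle x_2,\varrho_2(\xi)\rangle$ for all $\xi\in\g$, so $x_1\in C_1\iff x_2\in C_2$; and $G$-equivariance of $\Phi$ yields $\langle \a_1^\star(T_m\Phi)^\star\mu_2,\varrho_1(\xi)\rangle=\langle\mu_2,\xi_{Q_2}\rangle$. Hence the corrected element lands in $C_1$ exactly when $\langle\mu_2,\xi_{Q_2}\rangle=-\langle x_1,\varrho_1(\xi)\rangle$ for all $\xi$. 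This is where principality is essential: the infinitesimal action $\g\to T_nQ_2,\ \xi\mapsto\xi_{Q_2}$, is a bounded injection with closed (hence complemented) image, so its transpose $T_n^\star Q_2\to\g^\star$ is surjective and the required $\mu_2$ exists. The corrected $(x_2',x_1')$ then lies in $\gr(R)\cap C$ with $\a_i(x_i')=\hat w_i$, so it descends to $(y_2,y_1)\in\gr(R_\red)_{(\bar n,\bar m)}$ with $(\a_i)_\red(y_i)=T\pi_i(\hat w_i)=w_i$, proving fullness. The single real obstacle is this correction step; the rest is bookkeeping with the exact sequences, and the decisive input is that principality makes the transposed action map surjective so that $\mu_2$ can be found.
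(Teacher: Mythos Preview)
Your argument is correct. You verify fullness of $R_\red$ by hand: lift a tangent vector in $\gr(T\Phi_\red)$ to $\gr(T\Phi)$, use fullness of $R$ to find a preimage in $\gr(R)$, and then correct by an element of $\a^\star(\gr(T^\star\Phi))$ so as to land in $C_2\times C_1$, using principality of the $G$-action on $Q_2$ to solve for $\mu_2$. All steps are sound, and your observation that intertwining of generators forces $x_1'\in C_1\Leftrightarrow x_2'\in C_2$ is exactly what makes the single correction suffice.

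The paper's proof takes a different, more structural route. By Proposition~\ref{prop:alleq}, exactness of $R$ is equivalent to $R$ being a \emph{strong} Dirac morphism $(\AA_1,\ran(\a_1^\star))\da(\AA_2,\ran(\a_2^\star))$. One then applies Proposition~\ref{prop:Diracred} with $E_i=\ran(\a_i^\star)$; the technical hypothesis~\eqref{eq:technical} is automatic since $\ran(\varrho_i)\cap\ran(\a_i^\star)=0$ (freeness of the action), and the closedness hypothesis follows from Remark~\ref{rmks:redugral}(d). Since $\ran(\a_i^\star)_\red=\ran((\a_i)_\red^\star)$, the conclusion is that $R_\red$ is a strong Dirac morphism for the reduced cotangent Dirac structures, which by Proposition~\ref{prop:alleq} is again exactness. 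So the paper recycles the machinery of Proposition~\ref{prop:Diracred} rather than redoing the diagram chase. Your approach is more elementary and makes the role of principality (surjectivity of the transposed infinitesimal action) completely explicit, whereas in the paper's approach this is hidden inside the verification of hypothesis~\eqref{eq:technical}. Both arguments ultimately hinge on the same facts; the paper's is shorter but leans on prior propositions, while yours is self-contained.
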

\begin{proof}
In the exact case,  $R$ defines a strong Dirac morphism $(\AA_1,\ran(\a_1^\star))\da 
(\AA_2,\ran(\a_2^\star))$.  We have $\ran(\a_i^\star)_\red
=\ran((\a_i^\star)_\red)$, hence by Proposition \ref{prop:Diracred}
 applied to $E_i=\ran(\a_i^\star)$
we obtain a strong Dirac morphism 
$R_\red\colon ((\AA_1)_\red,(\ran(\a_1^\star))_\red)\da 
((\AA_2)_\red,(\ran(\a_2^\star))_\red)$. In turn, this means that $R_\red$ is exact. 
\end{proof}
We now describe the reduction of exact Courant morphisms in terms of isotropic splittings. Suppose $\AA_i\to Q_i$ for $i=1,2$ are $G$-equivariant exact Courant algebroids, with isotropic generators $\varrho_i\colon \g\to \Gamma(\AA_i)$. 
Let $j_i\colon TQ_i\to \AA_i$ be $G$-equivariant isotropic splittings, identifying $\AA_i=\T Q_{i,\eta_i}$ for  closed 3-forms $\eta_i$.

\begin{proposition}\label{prop:reductionofexactmorphisms} 
A  $G$-equivariant exact Courant morphism
\[ \T\Phi_\om \colon \T Q_{1,\eta_1}\da \T Q_{2,\eta_2}\]
intertwines the generators (cf.~ Equation \eqref{intertwgen} with $f=id_G$) if and only if 
\begin{equation}\label{eq:eq2form}
\d_G\om=\eta_{1,G}-\Phi^*\eta_{2,G}.\end{equation}
If the $G$-actions on $Q_i$ are principal actions, and $\theta_i\in\Omega^1(Q_i,\g)$ are connection 1-forms, defining 2-forms $\varpi_i\in \Omega^2(Q_i)$ as in \eqref{eq:varpi} and 3-forms $\eta_{i,\red}$ 
as in \eqref{eq:neweta}, then the reduced Courant morphism is 
\[ (\T \Phi_\red)_{\om_\red} \colon (\T Q_{1,\red})_{\eta_{1,\red}}\da (\T Q_{2,\red})_{\eta_{2,\red}}\]
where $\Phi_\red\colon Q_1/G\to Q_2/G$ is the map induced by $\Phi$, and $\om_\red$ is given by 
\[
 \pi_1^*\om_\red=\om+\varpi_1-\Phi^*\varpi_2.\]
\end{proposition}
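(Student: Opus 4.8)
The plan is to establish the two assertions in turn: first the equivalence between intertwining of the generators and the equivariant identity \eqref{eq:eq2form}, and then, building on it, the reduction formula. For the first assertion I would expand \eqref{eq:eq2form} by form-degree in the Cartan complex. By \eqref{eq:eqetaform} we have $\eta_{i,G}(\xi)=\eta_i+\alpha_i(\xi)$, and in the splitting $\AA_i=\T Q_{i,\eta_i}$ the generators \eqref{eq:alpha} read $\varrho_i(\xi)=\xi_{Q_i}+\alpha_i(\xi)$. Treating $\om\in\Omega^2(Q_1)$ as an equivariant form of polynomial degree $0$, so that $(\d_G\om)(\xi)=\d\om-\iota(\xi_{Q_1})\om$, the identity \eqref{eq:eq2form} splits into a form-degree-three part $\d\om=\eta_1-\Phi^*\eta_2$ and a form-degree-one part relating $\iota(\xi_{Q_1})\om$ to $\alpha_1(\xi)-\Phi^*\alpha_2(\xi)$. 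The degree-three part is exactly the condition \eqref{eq:domega} that $\T\Phi_\om$ be an exact Courant morphism, hence holds automatically. For the degree-one part I would substitute the two generators into the defining relation \eqref{eq:morr} of $\T\Phi_\om$: the tangential equation $\xi_{Q_2}=\Phi_*\xi_{Q_1}$ is precisely $G$-equivariance of $\Phi$, while the cotangential equation is exactly the form-degree-one component. Thus $\varrho_1(\xi)\sim_{\T\Phi_\om}\varrho_2(\xi)$ for all $\xi$ if and only if \eqref{eq:eq2form} holds.

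For the reduction formula I would first replace the splittings by $G$-basic ones. By Proposition \ref{prop:basplit}, twisting $j_i$ by the $2$-form $\varpi_i$ of \eqref{eq:varpi} produces $G$-basic splittings in which $\alpha_i=0$ and $\AA_i\cong\T Q_{i,\eta_i'}$ with $\eta_i'=\eta_i+\d\varpi_i=\pi_i^*\eta_{i,\red}$ by \eqref{eq:neweta}. The change of splitting from $j_i$ to the twisted one is the gauge transformation $\T\,\mathrm{id}_{-\varpi_i}\colon\T Q_{i,\eta_i}\da\T Q_{i,\eta_i'}$, the sign being forced by the compatibility $\d(-\varpi_i)=\eta_i-\eta_i'$. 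Writing $R=\T\Phi_\om$ in the new splittings as $\T\,\mathrm{id}_{-\varpi_2}\circ\T\Phi_\om\circ(\T\,\mathrm{id}_{-\varpi_1})^{-1}$ and applying the composition rule (under which the $2$-form of $\T\Psi_\nu\circ\T\Phi_\om$ is $\om+\Phi^*\nu$) twice, I obtain $R=\T\Phi_{\om'}$ with $\om'=\om+\varpi_1-\Phi^*\varpi_2$.

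It remains to descend $\om'$ and to recognize $R_\red$. Since $R$ still intertwines the generators, and in the new splittings $\alpha_i=0$, the first assertion applied to these splittings forces the degree-one part of \eqref{eq:eq2form} to read $\iota(\xi_{Q_1})\om'=0$; together with $G$-invariance of all the data this makes $\om'$ basic, so $\om'=\pi_1^*\om_\red$ for a unique $\om_\red\in\Omega^2(Q_1/G)$, which is the asserted formula. Finally, in a $G$-basic splitting the reduction morphism $q_i$ carries no correction $2$-form — it is $\T(\pi_i)_0$, since $j_i'(TQ_i)$ is the pullback of the reduced splitting — so the relation $q_2\circ R=R_\red\circ q_1$ of Theorem \ref{th:Rred}(b), rewritten through the composition rule as $\T(\pi_2\circ\Phi)_{\om'}=\T(\Phi_\red\circ\pi_1)_{\pi_1^*\om_\red}$, yields $\pi_1^*\om_\red=\om'$ and identifies $R_\red=(\T\Phi_\red)_{\om_\red}$. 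The main obstacle I anticipate is the consistent bookkeeping of signs across the change-of-splitting and the two compositions, together with checking that $q_i$ indeed has vanishing correction $2$-form in a $G$-basic splitting; once these are settled the rest is formal.
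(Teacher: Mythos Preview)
Your proposal is correct and follows essentially the same strategy as the paper. The paper's proof of the first assertion is a one-line appeal to Proposition~\ref{prop:burdi} (observing that $\om$ is precisely the $2$-form relating $j_1$ to the pullback of $j_2$, so the change-of-splitting rule $\eta_G'=\eta_G+\d_G\varpi$ gives \eqref{eq:eq2form}); your explicit degree-by-degree unpacking via \eqref{eq:morr} is simply the content of that appeal written out. For the second assertion the paper does exactly what you do: twist both sides by $\varpi_i$ to $G$-basic splittings, read off $\om'=\om+\varpi_1-\Phi^*\varpi_2$, observe from the new version of \eqref{eq:eq2form} (where now $\alpha_i'=0$) that $\om'$ is $G$-basic, and descend. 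Your additional step of identifying $R_\red$ via $q_i=\T(\pi_i)_0$ and Theorem~\ref{th:Rred}(b) makes explicit what the paper leaves implicit in its last sentence. Your caution about sign bookkeeping is well placed but does not affect the architecture of the argument.
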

\begin{proof}
By definition, the 2-form $\om$ relates the splitting $j_1$ with the `pullback' of the splitting $j_2$. 
Hence, \eqref{eq:eq2form} follows from 
Proposition \ref{prop:burdi}. Suppose now that the $G$-actions are principal. 
Given connection 1-forms $\theta_i$ and the corresponding 2-forms $\varpi_i$, let $j_i'$ be the 
$G$-basic splittings obtained by twisting $j_i$ by the 2-forms 
$\varpi_i$. The 3-forms change to $\eta_i'=\eta_i+\d\varpi_i$, which are $G$-basic and in particular coincide with their equivariant extensions: $\eta'_{i,G}=\eta'_i$. The
2-form describing $R = \T\Phi_{\omega}$ relative to the new splitting is
$\om'=\om+\varpi_1-\Phi^*\varpi_2$. Equation \eqref{eq:eq2form} gets replaced with 
$\d_G\om'=\eta'_{1}-\Phi^*\eta'_{2}$, which shows in particular that $\om'$ is $G$-basic. 
The resulting 2-form 
$\om_\red$ with $ \pi_1^*\om_\red=\om'$ describes the exact morphism $R_\red$. 
\end{proof}

\section{The Hilbert principal bundle of connections}
\label{sec:hol} 
Let $G$ be a connected finite-dimensional Lie group.  
The holonomy fibration is defined to be the space of connections $\A_{\II}$ on the trivial $G$-bundle over the interval $\II = [0,1]$.  By imposing appropriate regularity conditions, the space $\A_{\II}$ is a principal bundle for the Hilbert Lie group of gauge transformations which are trivial at the boundary $\p\II$.  The principal bundle projection is the map to $G$ given by the holonomy along the interval.  A slight modification of this holonomy fibration, which makes contact with the usual theory of loop groups, is studied in Section~\ref{sec:conns1}; there we consider connections on the trivial $G$-bundle over the circle instead of the interval.  Also, in our study of the geometry of these fibrations it will be useful to choose principal connections, which may be done via the Caloron correspondence, reviewed in Appendix~\ref{app:cal}.

%
%


\subsection{Sobolev space notation} 
\label{subsec: convent} 

We use the following basic properties of Sobolev spaces (see
e.g. \cite[Section 11]{bo:el} or \cite[Section 14]{at:mo}). Let $r\ge 0$. For a finite-dimensional compact manifold $M$, possibly
with boundary, let $\HH_r(M)$ denote the order $r$ Sobolev space of
functions. In particular, we have $\HH_0(M)=\LL\!^2(M)$.  The spaces
$\HH_r(M)$ are Hilbert spaces, with compact inclusions $\HH_s(M)\subseteq \HH_r(M)$ for $s>r$. 
By the Sobolev embedding
theorem, $\HH_r(M)\subseteq \mathsf{C}^l(M)$ for $r-\hh\dim(M)>l$.  The
space $\HH_r(M)$ is a Banach algebra for $r-\hh \dim M>0$, and for
$0\le j\le r$ the space $\HH_j(M)$ is a module over this Banach algebra. If
$Z\subseteq M$ is a submanifold, and $r-\hh \on{codim}(Z)>0$, then the
restriction of continuous functions from $M$ to $Z$ extends to a
continuous linear map $\HH_r(M)\to \HH_{r-\hh\on{codim}(Z)}(Z)$, with
a continuous right inverse.

If $N$ is another finite-dimensional manifold and $r-\hh \dim (M)>0$,
one defines spaces $\on{Map}_{\HH_r}(M,N)\subseteq C^0(M,N)$ of maps
$M\to N$ of Sobolev class $r$, by choosing local charts for $N$. In
particular, if $G$ is a finite-dimensional Lie group, and $r-\hh \dim
M>0$, then $\on{Map}_{\HH_r}(M,G)$ is a Hilbert Lie group under
pointwise multiplication. 

We denote by $\Omega^k_{\HH_r}(M)$ the sections of $\wedge^k T^\star M$ of Sobolev class $r$. 


\subsection{The holonomy map as a principal bundle projection}
\label{subsec: holonomy bundles}
We make use
of several elementary results in gauge theory (see e.g.~ \cite[Appendix A]{fre:ins}), specialized to the case of 1-dimensional manifolds. 
Fix a real number $r\ge 0$. The space of connections on the trivial $G$-bundle over the interval
$\II=[0,1]$ with Sobolev class $r$ is a Hilbert manifold 
\[
\A_\II = \Omega^1_{\HH_r}(\II,\g).
\]
Since $r\ge 0$, the
space of maps
\[ 
G_\II=\on{Map}_{\HH_{r+1}}(\II,G)
\]
defines a Hilbert Lie group, with Lie algebra $\g_\II=\Omega^0_{\HH_{r+1}}(\II,\g)$. This group acts smoothly 
by gauge transformations
\begin{equation}\label{eq:gaugeaction}
g\cdot A=\Ad_g(A)-g^*\theta^R,
\end{equation}
for $g\in G_\II$ and $A\in \A_\II$.   Here $\theta^R\in\Omega^1(G,\g)$ is the right-invariant Maurer-Cartan form on $G$.
Note that $g$ is taken to have Sobolev class $r+1$ because the involvement
of derivatives implies that $g^*\theta^R$ has class $r$. Given $\xi\in \g_\II$, the corresponding 
generating vector field $\xi_{\A_{\II}}$ is given by 
\begin{equation}\label{eq:fundvf}
  \xi_{\A_{\II}}|_A=\partial_A\xi, 
\end{equation}
where 
\begin{equation}\label{eq:covder}
\partial_A = \partial + \ad(A) \colon \Omega^0_{\HH_{r+1}}(\II,\g)\to\Omega^1_{\HH_{r}}(\II,\g)
\end{equation}
is the exterior covariant derivative associated to $A\in\A_\II$. 
The action \eqref{eq:gaugeaction}  is transitive: given $A\in\A_\II$, the equation
\[ A =g^{-1}\cdot 0=g^*\theta^L\] is a first order ordinary differential equation for
$g\in G_\II$, and so has a unique solution once an initial condition
$g(0)$ is chosen. Furthermore, this solution lies in $\HH_{r+1}$ by
standard elliptic theory, as required. We define the holonomy map
$\Hol\colon \A_{\II}\to G$ in terms of the commutative diagram
\begin{equation}
\label{eq:pg01}
\xymatrix{
G_\II \ar[r]^{g\mapsto g^{-1}\cdot 0}\ar[d]& \A_{\II}\ar[d]^-{\Hol}\\
G\times G \ar[r]_{}& G
}
\end{equation}
where the left vertical map is given by $g\mapsto (g(0),g(1))$ and the lower horizontal map is 
$(a_0,a_1)\mapsto a_0^{-1}a_1$.
Both horizontal maps may be seen as quotient maps for a principal $G$-action, given by multiplication from the left. All maps in the diagram are $G_\II$-equivariant, where $G_\II$ acts on itself by 
\[ g\mapsto k.g,\ \ (k.g)(t)=g(t)k(t)^{-1},\]
on $G\times G$ by $(a_0,a_1)\mapsto (a_0k(0)^{-1},\ a_1k(1)^{-1})$, 
and on $G$ by $a\mapsto k(0) a k(1)^{-1}$. In particular, 
\begin{equation}\label{eq:equivhol}
\Hol(k\cdot A) = k(0)\, \Hol(A)\, k(1)^{-1},
\end{equation}
for $k\in G_\II$. 
The map $\Hol$ may be regarded as the quotient map for the principal action of the subgroup 
\begin{equation}\label{strholfib}
G_{\II,\partial \II} = \{g\in G_\II\colon \ g(0) = g(1) = e\}.
\end{equation}
By taking the differential of \eqref{eq:equivhol}, we see that 
the differential $T\Hol\colon T\A_\II\to T G$ satisfies
\begin{equation}\label{eq:holdiffact}
(T_A\Hol)(\xi_{\A_{\II}}|_A) 
= (\xi(1)^L - \xi(0)^R)|_{\Hol(A)} 
\end{equation}
for  $\xi \in \g_\II$.

\subsection{Principal connections for the holonomy fibration
 }\label{subsec:princon}
Any function $\chi\in C^\infty(\II)$ with $\chi(0)=0$ and $\chi(1)=1$ defines a connection $\theta$ on the principal bundle $\A_\II\to G$.  The connection can be described in terms of the 
corresponding horizontal lift. Let $g\in G_\II$ be any path such that $A=g^{-1}\cdot 0$. 
Using left-trivialization $TG=G\times \g$, the horizontal lift for $\theta$ is given as 
\[ T_{\Hol(A)}G\to T_A\A_\II,\ \ X\mapsto \partial_A \xi\]
where $\xi\in\g_\II$ is the path 
\[ \xi(t)=\chi(t)\Ad_{g(t)^{-1} g(1)}X.\] 
Note that this does not depend on the choice of $g$ with $g^{-1}\cdot 0=A$.
In Appendix \ref{app:cal}, we review the `conceptual construction' of $\theta$, provided by the \emph{caloron correspondence}.  The horizontal bundle defined by $\theta$ is invariant under the full action of $G_\II$ (not only of the structure group $G_{\II,\p\II}$ of the principal bundle). 

In particular, one can take $\chi(t)=t$. The resulting connection $\theta$ is uniquely  characterized by $G_\II$-invariance together with the value at the zero connection 
$A=0$,  given by
\begin{equation}\label{eq:thetaA}
\iota(a)\theta(t)=\int_0^ta - t \int_0^1 a, \ \ a \in T_0\A_\II.
\end{equation}
That is, the horizontal space at $A=0$ is $\g\subseteq \Omega^1(\II,\g)$, embedded as `constant 1-forms'.

\begin{remark}
Suppose $r=0$, so that $\A_\II$ consists of $L^2$-connections, and suppose that $\g$ comes equipped with an $\Ad$-invariant metric (as in Section \ref{sec: Courant A} below).  Define a $G_\II$- invariant pseudo-Riemannian metric on $\A_\II$ via
  \begin{equation}\label{prmai}
  (a_1,a_2)=\int_{[0,1]} a_1\cdot *a_2.
  \end{equation}
 Then the connection $\theta$ defined by $\chi(t)=t$ is the unique connection for which 
 the horizontal spaces are orthogonal to the $G_{\II,\partial\II}$-orbits for the metric~\eqref{prmai}. 
(This is easily verified at $A=0$; the claim follows by invariance.) 
\end{remark}

\section{Dirac reduction for the holonomy fibration}
\label{sec: Courant A}
Let $G$ be a connected finite-dimensional Lie group with a bi-invariant pseudo-Riemannian metric, so that its Lie algebra $\g$  is a \emph{metrized Lie algebra}, that is, it comes with 
 a non-degenerate $\Ad$-invariant symmetric bilinear form, denoted by $(X_0,X_{1})\mapsto X_{0}\cdot X_{1}$.  We use the metric to define a $G_\II$-invariant ``Lie-Poisson'' structure on $\A_\II$, a weak Poisson structure given by an invariant Dirac structure in the standard Courant algebroid $\T\A_\II$. 
We then explain how to carry out a reduction along the holonomy map $\Hol\colon \A_\II\to G$, obtaining the Cartan-Dirac structure of Section \ref{sec:carcou}.
We also consider more general weak Poisson structures on $\A_\II$, which reduce to other 
Dirac structures on $G$.  Finally, we study the reduction of Hamiltonian spaces for these weak Poisson structures.

\subsection{$G_\II$-action on $\T\A_\II$} 
\label{subsec:PGaction}
Let $\d A$ denote the tautological $\Omega^1_{\HH_r}(\II,\g)$-valued 1-form on the affine space 
$\A_\II$, defined by 
\[ \iota(a)\, \d A=a\] 
for all $a\in T_A\A_\II$. Let $\T \A_\II=T\A_\II\oplus T^\star\A_\II$ be the standard Courant algebroid
over $\A_\II$, and define sections 
%
\begin{equation}\label{eq:varrho}
\varrho(\xi)=\xi_{\A_{\II}}+ \l\d A,\, \xi\r \in \Gamma(\T \A_\II),\ \ \
\xi\in \g_\II ,
\end{equation} 
where $\xi_{\A_{\II}}$ are the generating vector fields for the $G_\II$-action
on $\A_\II$, and the 1-form component is such that 
\[
i_{a}\l\d A,\,\xi\r = \int_{I}a\cdot \xi,\ \ \ a\in T_{A}\A_{\II}.
\]
Note that this is similar to the formula for the sections spanning the Lie-Poisson structure on $\g^\star$ (cf.~ Equation \eqref{eq:coadj}). 
For any subspace ${\mf{s}}\subseteq \ol{\g}\oplus\g$, let
\[ \g_\II^{(\s)}=\{\xi\in \g_\II|\ (\xi(0),\xi(1))\in {\mf{s}}\}\]
be the subspace of paths with end points in $\mf{s}$. Let
$\E^{(\mf{s})}\subseteq \T\A_\II$ denote the subbundle spanned by all
$\varrho(\xi),\ \xi\in \g_\II^{(\s)}$. For the trivial subspace $\mf{s}=\{0\}$, the space $\g_\II^{(\s)}$ coincides with $\g_{\II,\partial\II}$, the Lie algebra of the structure group~\eqref{strholfib} of the holonomy fibration.
\begin{proposition}\label{prop:gene}
The sections \eqref{eq:varrho} are generators for the standard lift of the $G_\II$-action 
to the Courant algebroid $\T\A_\II$.  They satisfy 
  \begin{equation}\label{eq:innpr}
    \l\varrho(\xi),\varrho(\zeta)\r=\xi(1)\cdot\zeta(1)-\xi(0)\cdot\zeta(0)
  \end{equation}
  for all $\xi,\zeta\in \g_\II$. Furthermore, for any subspace $\s\subset  \ol{\g}\oplus\g$, 
  one has 
   \begin{equation}\label{eq:perp}
(\E^{(\s)})^\perp=\E^{({\mf{s}}^\perp)}.
  \end{equation}
\end{proposition}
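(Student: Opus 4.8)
The plan is to handle the three assertions in turn; the generator property and the inner‑product formula \eqref{eq:innpr} are short computations, while the orthogonality relation \eqref{eq:perp} carries the real content. For the generator property I would invoke the description of inner derivations of the standard Courant algebroid recorded in the Remark on $\on{Der}(\T Q_\eta)$: for $\T\A_\II$ (i.e.\ $\eta=0$), the inner derivation $\Cour{\sigma,\cdot}$ attached to $\sigma=w+\mu$ is the pair $(w,-\d\mu)$. Applied to $\sigma=\varrho(\xi)=\xi_{\A_\II}+\langle\d A,\xi\rangle$, the assertion that $\Cour{\varrho(\xi),\cdot}$ equals the standard lift of $\xi_{\A_\II}$, which in that notation is $(\xi_{\A_\II},0)$, reduces to showing that the $1$‑form $\langle\d A,\xi\rangle$ is closed. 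But from its defining formula $\iota_a\langle\d A,\xi\rangle=\int_0^1 a\cdot\xi$ the coefficients are independent of the base point $A\in\A_\II$: it is a constant‑coefficient $1$‑form on the affine space $\A_\II$, hence closed. For $G_\II$‑equivariance of $\varrho$ I would combine the standard equivariance $(g^{-1})_*\xi_{\A_\II}=(\Ad_g\xi)_{\A_\II}$ of fundamental vector fields with the identity $(g^{-1})^*\langle\d A,\xi\rangle=\langle\d A,\Ad_g\xi\rangle$, which follows from $\Ad$‑invariance of the metric on $\g$ and the fact that the tangent map of the gauge action \eqref{eq:gaugeaction} is $a\mapsto\Ad_g a$.

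For \eqref{eq:innpr} I would expand the standard Courant pairing and substitute $\xi_{\A_\II}=\partial_A\xi$:
\[
\langle\varrho(\xi),\varrho(\zeta)\rangle=\langle\d A,\xi\rangle(\zeta_{\A_\II})+\langle\d A,\zeta\rangle(\xi_{\A_\II})=\int_0^1(\partial_A\zeta)\cdot\xi+\int_0^1(\partial_A\xi)\cdot\zeta.
\]
Writing $\partial_A=\partial+\ad(A)$ and using $\ad$‑skewness of the invariant metric, the two terms containing $A$ cancel, leaving $\int_0^1(\zeta'\cdot\xi+\xi'\cdot\zeta)=\int_0^1\partial_t(\xi\cdot\zeta)$, which integrates to $\xi(1)\cdot\zeta(1)-\xi(0)\cdot\zeta(0)$.

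The heart of the matter is \eqref{eq:perp}. The computation just performed shows $\langle\varrho(\xi),\varrho(\zeta)\rangle=\langle\hat\xi,\hat\zeta\rangle_{\dd}$ with $\hat\xi=(\xi(0),\xi(1))\in\dd=\ol\g\oplus\g$, which makes the inclusion $\E^{(\s^\perp)}\subseteq(\E^{(\s)})^\perp$ immediate, since $\hat\zeta\in\s^\perp$ and $\hat\xi\in\s$ force $\langle\varrho(\zeta),\varrho(\xi)\rangle=0$. For the reverse inclusion I would take $\sigma=v+\mu\in(\E^{(\s)})^\perp$ at a point $A$ and reconstruct a $\zeta$ with $\sigma=\varrho(\zeta)$ and $\hat\zeta\in\s^\perp$. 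The gauge‑theoretic facts needed (from the ODE solvability cited in Section~\ref{subsec: holonomy bundles}) are: (i) $\partial_A\colon\g_\II\to T_A\A_\II$ is surjective with kernel the $\dim\g$‑dimensional space of covariantly constant fields; and (ii) $\partial_A$ restricts to an injection of $\g_{\II,\partial\II}$ with \emph{closed} image of codimension $\dim\g$. Using (i) I choose $\zeta_0$ with $\partial_A\zeta_0=v$; the orthogonality condition $\mu(\partial_A\xi)+\int_0^1 v\cdot\xi=0$, after integrating $\int_0^1(\partial_A\zeta_0)\cdot\xi$ by parts exactly as above, becomes
\[
\big(\mu-\langle\d A,\zeta_0\rangle\big)(\partial_A\xi)=-\langle\hat\zeta_0,\hat\xi\rangle_{\dd}\qquad(\hat\xi\in\s).
\]
Specializing to $\xi\in\g_{\II,\partial\II}$ (so $\hat\xi=0\in\s$) shows that $\mu':=\mu-\langle\d A,\zeta_0\rangle$ annihilates the closed codimension‑$\dim\g$ subspace $\partial_A(\g_{\II,\partial\II})$. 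By (ii) this annihilator is $\dim\g$‑dimensional, and $\zeta_1\mapsto\langle\d A,\zeta_1\rangle$ from the $\dim\g$‑dimensional space of covariantly constant fields into it is injective by nondegeneracy of the $L^2$ pairing, hence an isomorphism; thus $\mu'=\langle\d A,\zeta_1\rangle$ for a unique covariantly constant $\zeta_1$. Setting $\zeta=\zeta_0+\zeta_1$ and using $\partial_A\zeta_1=0$ yields $\sigma=\partial_A\zeta+\langle\d A,\zeta\rangle=\varrho(\zeta)$, and then $\sigma\in(\E^{(\s)})^\perp$ together with $\langle\varrho(\zeta),\varrho(\xi)\rangle=\langle\hat\zeta,\hat\xi\rangle_{\dd}$ forces $\hat\zeta\in\s^\perp$, so $\sigma\in\E^{(\s^\perp)}$.

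The main obstacle is precisely this reverse inclusion: in infinite dimensions a covector $\mu\in T^\star_A\A_\II$ need not be represented by an $L^2$ density, so it is not \emph{a priori} of the special form $\langle\d A,\zeta\rangle$ occurring in the generators. The argument circumvents this by confining the discrepancy $\mu'$ to the annihilator of the finite‑codimension subspace $\partial_A(\g_{\II,\partial\II})$, where a dimension count pins it down. I would therefore be careful to establish (i) and (ii) — surjectivity of $\partial_A$ with a $\dim\g$‑dimensional kernel, and closedness and codimension $\dim\g$ of its image on $\g_{\II,\partial\II}$ — with the relevant Sobolev/ODE references before running the reconstruction, since these are exactly what make the count legitimate.
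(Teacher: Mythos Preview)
Your proposal is correct and follows essentially the same strategy as the paper's proof. For the generator property and \eqref{eq:innpr} the arguments are the same (the paper computes $\l\varrho(\xi),\varrho(\xi)\r$ and polarizes, you compute $\l\varrho(\xi),\varrho(\zeta)\r$ directly, but this is immaterial). For the hard inclusion in \eqref{eq:perp}, both arguments reduce to showing $\varrho(\g_{\II,\partial\II})^\perp\subseteq\varrho(\g_\II)$ by solving $\partial_A\zeta=v$ and then adjusting by an element of the finite-dimensional kernel of $\partial_A$ to match the covector. The only tactical difference is that the paper first uses $G_\II$-equivariance to reduce to $A=0$, and there uses the explicit horizontal/vertical decomposition of $T_0\A_\II$ furnished by the standard connection $\theta$ (horizontal space $=\g\,dt$) to verify $\beta=\l\d A,\zeta\r$ on each summand; you instead stay at general $A$ and identify the annihilator of $\partial_A(\g_{\II,\partial\II})$ with $\{\l\d A,\zeta_1\r:\partial_A\zeta_1=0\}$ by a dimension count. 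Your route avoids invoking the connection $\theta$ and is slightly more self-contained; the paper's route is perhaps more concrete since at $A=0$ the horizontal space and the verification are explicit. Either way the substance is the same.
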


\begin{proof}
The map $\g_\II\to \Omega^1(\A_\II),\ \xi\mapsto \l\d A,\,\xi\r$ is $G_\II$-equivariant and 
takes values in closed 1-forms. 
Since the $\xi_{\A_{\II}}$ (viewed as sections of $\T\A_\II$) are generators for the action, 
so are $\xi_{\A_{\II}}+ \l\d A,\,\xi\r$. In particular, 
\begin{equation}\label{eq:equivariance}
\varrho(\Ad_g \xi)=g.\varrho(\xi)\end{equation}
for all 
$g\in G_\II,\ \xi\in\g_\II$.
Furthermore, 
\[ \begin{split}
\l \varrho(\xi),\varrho(\xi)\r&=
2\iota(\xi_{\A_{\II}})\l\d A,\,\xi\r
=2 \int_0^1 \partial_A\xi\cdot \xi =\int_0^1 \partial (\xi\cdot \xi)
\\&=\xi(1)\cdot\xi(1)-\xi(0)\cdot\xi(0),
\end{split} \]
which proves \eqref{eq:innpr} by polarization. 

This also gives the reverse inclusion in \eqref{eq:perp}. For the forward inclusion, 
we first show that
 \begin{equation}\label{eq:perp1}
  \varrho(\A_\II\times \g_{\II,\partial\II})^\perp\subset \varrho(\A_\II\times \g_\II).\end{equation}
We may use the $G_\II$-invariance to assume $A=0$. Suppose $b+\beta\in \T_0\A_\II$ is orthogonal to $\varrho(A_\II\times \g_{\II,\partial\II})$. That is, 
for all $\xi \in \g_{\II,\partial\II}$, 
\[ 0=\l b+\beta,\ \xi_{\A_{\II}}+  \l\d A,\,\xi\r\r=
\beta(\partial \xi)+\int_0^1 b\cdot\xi.\]
Let $\zeta$ be a solution of $\partial\zeta=b$, with the unique initial condition $\zeta(0)$ such that for all $X\in\g$,
\[ X\cdot\int_0^1 \zeta(t)\,dt=\beta(X\,d t).\]
By elliptic regularity, $\zeta$ has Sobolev class $r+1$, so that $\zeta\in \g_\II$. We will show
that $\beta=\l\d A,\zeta\r$, which then proves that $b+\beta=\varrho(\zeta)|_0$. 

Consider the decomposition of $T_0\A_\II$ into horizontal and vertical directions, relative to the standard connection $\theta$ given by \eqref{eq:thetaA}. The vertical space is spanned by elements $\xi_{\A_{\II}}=\partial\xi$
with $\xi\in \g_{\II,\partial\II}$, and we have 
\[ \iota(\xi_{\A_{\II}})\l \d A,\zeta\r=\l \partial\xi,\zeta\r=-\l\partial\zeta,\xi\r=-\int_0^1 b\cdot\xi=\beta(\partial\xi).\]
The horizontal space is spanned by elements of the form $X\, d t$ with $X\in \g$, and on such elements we have 
\[ \iota(Xd t)\l \d A,\zeta\r=X\cdot \int_0^1 \zeta(t)\d t=\beta(X d t),\]
by definition of $\zeta$, establishing~\eqref{eq:perp1}. 
For the final equality \eqref{eq:perp}, note that  $\varrho(\A_\II\times \g_{\II,\partial\II})\subseteq {\E^{(\s)}}$, and so, using~\eqref{eq:perp1}, 
$({\E^{(\s)}})^\perp\subseteq \varrho(\A_\II\times \g_\II)$.  Then~\eqref{eq:perp} follows from  \eqref{eq:innpr}.
\end{proof}

\subsection{The Lie-Poisson structure on $\A_\II$}
\label{subsec: weak Poisson}
Proposition \ref{prop:gene} shows that  ${\E^{(\s)}}$ is a Lagrangian subbundle if and only if the subspace $\mf{s}$ is Lagrangian. Since the map $\varrho\colon \g_\II\to \Gamma(\T \A_\II)$ is bracket preserving, it follows that ${\E^{(\s)}}$ is a Dirac structure if and only if
$\mf{s}$ is a Lagrangian Lie subalgebra. 

\begin{remark}
Since  $\xi_{\A_{\II}}|_A=\partial_A \xi$,  
the fiber ${\E^{(\s)}}|_A$ may be interpreted as the  graph of the  unbounded operator $\partial_A$ with dense domain 
$\g_I^{(\s)}\subseteq \Omega^0_{\HH_{r+1}}(\II,\g)$
consisting of paths of Sobolev class $r+1$ with end points in $\s$. Taking $r=0$, the operator $\partial_A$ is skew-adjoint if and only if $\s$ is Lagrangian. 
\end{remark}

\begin{proposition}\label{lem:C}
  For any Lagrangian Lie
  subalgebra $\s$, the Dirac structure ${\E^{(\s)}}$ is a weak Poisson structure on
  $\A_\II$.  Its leaves $\O$ are the orbits of the
  $\g_\II^{(\s)}$-action on $\A_\II$, with weakly symplectic 2-forms
  $\om_\O$ given on generating vector fields by
  \[ \om_\O(\xi_{1,\O},\,\xi_{2,\O})|_A= \int_{I}
  \xi_1\cdot \partial_A \xi_2,\] for $A\in \O$ and $\xi_1,\xi_2\in \g_\II^{(\s)}$.
\end{proposition}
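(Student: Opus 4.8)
The plan is to verify the three defining properties of a weak Poisson structure and then identify the leaves and their 2-forms. First I would confirm that $\E^{(\s)}$ is a weak Poisson structure, i.e.\ that $\E^{(\s)}\cap T\A_\II=0$. A tangent vector $\xi_{\A_\II}|_A=\partial_A\xi$ lies in $\E^{(\s)}|_A$ with zero 1-form component precisely when $\l\d A,\xi\r=0$ as a covector, which by the formula $\iota(a)\l\d A,\xi\r=\int_\II a\cdot\xi$ and nondegeneracy of the metric forces $\xi=0$ (as an $L^2$-function, hence identically, given its regularity). Thus the intersection is trivial and, since $\s$ is a Lagrangian Lie subalgebra so $\E^{(\s)}$ is already a Dirac structure by the discussion preceding the proposition, $\E^{(\s)}$ is a weak Poisson structure.

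Next I would identify the leaves. Since $\E^{(\s)}$ is spanned by the sections $\varrho(\xi)$ for $\xi\in\g_\II^{(\s)}$, the anchor image $\a(\E^{(\s)})$ is spanned by the generating vector fields $\xi_{\A_\II}=\partial_A\xi$, which are exactly the tangent directions to the orbits of the $\g_\II^{(\s)}$-action. By the general principle (stated in the excerpt) that in our applications the foliation is given by the orbits of a group action, the leaves $\O$ are precisely these orbits.

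Finally I would compute the leafwise 2-form. By the recipe for the 2-form on a leaf of a Dirac structure in $\T Q$, one has $\om_\O(v_1,v_2)=\l\alpha_1,v_2\r$ where $v_i\in T_A\O$ and $v_1+\alpha_1\in\E^{(\s)}|_A$. Taking $v_i=\xi_{i,\O}|_A=\partial_A\xi_i$ and noting $\partial_A\xi_1+\l\d A,\xi_1\r=\varrho(\xi_1)\in\E^{(\s)}$, so that $\alpha_1=\l\d A,\xi_1\r$, I would evaluate
\[
\om_\O(\xi_{1,\O},\xi_{2,\O})|_A=\iota(\partial_A\xi_2)\l\d A,\xi_1\r=\int_\II \partial_A\xi_2\cdot\xi_1=\int_\II \xi_1\cdot\partial_A\xi_2,
\]
which is the claimed formula. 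Weak symplecticity (injectivity of $\Omega^\flat$) follows from $\E^{(\s)}\cap T\A_\II=0$, as noted in the general theory of weak Poisson structures.

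The main obstacle is a regularity subtlety rather than an algebraic one: establishing $\E^{(\s)}\cap T\A_\II=0$ requires knowing that a covector $\l\d A,\xi\r$ vanishing on all of $T_A\A_\II$ forces $\xi=0$, which relies on nondegeneracy of the metric together with the density of the domain $\g_\II^{(\s)}$ and the fact that vanishing of $\int_\II a\cdot\xi$ for all $a\in\Omega^1_{\HH_r}$ implies $\xi=0$ in $L^2$. I would take care to invoke the duality pairing and Sobolev embedding correctly here, but this is the only point where infinite-dimensionality intervenes; everything else is the finite-dimensional leafwise-form computation carried over verbatim.
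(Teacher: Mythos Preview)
Your proposal is correct and follows essentially the same route as the paper: show $\E^{(\s)}\cap T\A_\II=0$ directly from the formula $\varrho(\xi)=\xi_{\A_\II}+\l\d A,\xi\r$, identify the leaves as $\g_\II^{(\s)}$-orbits via the anchor, and compute $\om_\O$ from the standard recipe for the leafwise 2-form. The paper treats the first step as ``immediate'' where you spell out the nondegeneracy argument; your extra care there is fine but not strictly necessary, since $\l\d A,\xi\r=0$ on all of $T_A\A_\II$ is just the statement that $\xi$ pairs to zero with every $a\in\Omega^1_{\HH_r}(\II,\g)$.
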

\begin{proof}
  From the formula $\varrho(\xi)=\xi_{\A_{\II}}+\l\d A,\,\xi\r$, it is immediate
  that ${\E^{(\s)}}\cap T\A_\II=0$, and that the orbits of
  ${\E^{(\s)}}$ are the orbits of the $\g_\II^{(\s)}$-action. By
  definition, the 2-forms on these orbits satisfy
  \[ \iota(\xi_\O)\om_\O|_A=-i_\O^*\l\d A,\,\xi\r    ,\ \ \ \xi\in
  \g_\II^{(\s)},\ A\in\O\]
  where $i_\O$ is the inclusion of $\O$. Hence
  \[ \om_\O(\xi_{1,\O},\,\xi_{2,\O})|_A=-\iota(\xi_{2,\O})\l \d
  A,\xi_1\r =\l \partial_A \xi_2,\xi_1\r =\int_I \xi_1\cdot \partial_A
  \xi_2,\ \ \]
  for all $\xi_1,\xi_2\in \g_\II^{(\s)}$.
\end{proof}
The case of the  diagonal 
$\mf{s}=\g_\Delta$ (periodic boundary conditions)
is particularly important.  The Dirac structure $\E\equiv\E^{(\g_\Delta)}$ is called the \emph{Lie-Poisson
  structure} on  $\A_\II$.
\begin{remark}  
By definition, the algebra of admissible functions (cf.~ Section \ref{subsec:weakpoisson}) for the weak Poisson structure $\E^{(\s)}$ contains all affine-linear functions of the form $f(A)=t+\l A,\xi\r$ with 
$\xi\in \g_\II^{(\s)}$ and $t\in \R$;  the corresponding Hamiltonian vector field is $v_f=\xi_{\A_{\II}}$. 
These affine-linear functions form a Lie algebra under the Poisson bracket:
\[ \{t_1+\l A,\xi_1\r,\ t_2+\l A,\xi_2\r\}= \L(\xi_{1,\A})\l A,\xi_2\r=\int_\II \partial_A \xi_1\cdot \xi_2
=\int_\II \partial \xi_1\cdot\xi_2+\l A, [\xi_1,\xi_2]\r.\]
Therefore, they define a central extension of the Lie algebra $\g_\II^{(\s)}$, with cocycle
$\int_\II \partial\xi_1\cdot\xi_2$. For 
$\s=\g_\Delta$, this is the standard central extension of  the loop algebra. 
\end{remark}

Let $S\subseteq D= \ol{G}\times G$ be a Lie subgroup whose Lie algebra $\mathfrak{s}\subseteq \dd=\ol{\g}\oplus \g$ is Lagrangian. Consider the subgroup 
\[ \ca{S}\equiv G_\II^{(S)}\subseteq G_\II\] 
consisting of paths $g\in G_\II$ with endpoints $(g(0),g(1))\in S$.   Generalizing \cite[Theorem 8.3]{al:mom}, we have: 
\begin{proposition}\label{prop:HamA}
An exact Hamiltonian $\S$-space for the weak Poisson structure $(\T\A_\II,\E^{(\mathfrak{s})})$ is equivalent to a manifold $\M$ with an action of $\S$, together with an invariant weakly symplectic 2-form
$\sigma\in \Omega^2(\M)$ and an equivariant \emph{moment map} $\Psi\colon
\M\to\A_\II$ satisfying
\begin{equation}\label{eq:momcond} \iota(\xi_\M)\sigma=-\l \d
  \Psi,\xi\r,\ \ \ \xi\in \g^{(\s)}_\II.\end{equation}
Here $\l\d\Psi,\xi\r\in \Omega^1(\M)$ denotes the pullback by $\Psi$ of the 1-form
$\l\d A,\,\xi\r\in\Omega^1(\A_\II)$. 
\end{proposition}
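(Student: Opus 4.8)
The plan is to specialize the general description of exact Hamiltonian spaces (Proposition \ref{prop:hamiltonian}) to the weak Poisson structure $(\T\A_\II,\E^{(\s)})$, translating each of its three conditions into the language of the data $(\M,\sigma,\Psi)$. Recall from Proposition \ref{prop:hamiltonian} that an exact Hamiltonian space for $(\AA,E)$ is equivalent to a map $\Phi\colon M\to Q$, a 2-form $\om\in\Omega^2(M)$, and a Lie algebroid action of $E$ along $\Phi$, satisfying (i) $\d\om=-\Phi^*\eta$, (ii) $\ker(\om)\cap\ker(T\Phi)=0$, and (iii) $\iota(\sig_M)\om=-\Phi^*(j^\star\sig)$. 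In our situation $Q=\A_\II$ and $\AA=\T\A_\II$ is the \emph{standard} Courant algebroid, so the isotropic splitting is the trivial one and the corresponding 3-form is $\eta=0$. The first step is therefore to observe that condition (i) becomes $\d\sigma=0$, i.e. $\sigma$ is closed; combined with condition (ii) this is exactly the statement that $\sigma$ is weakly symplectic (injective $\Omega^\flat$), matching Example \ref{ex:exex} where one sees $\ker(\om)=0$ automatically since $\T\A_\II\cap\E^{(\s)}=0$.

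Next I would identify the Lie algebroid action of $\E^{(\s)}$ with an action of $\S$ (equivalently of $\g_\II^{(\s)}$). The Dirac structure $\E^{(\s)}$ is spanned by the generators $\varrho(\xi)=\xi_{\A_\II}+\l\d A,\xi\r$ for $\xi\in\g_\II^{(\s)}$, and by the discussion following the definition of Hamiltonian spaces (Section \ref{subsec:morphisms}), a Lie algebroid action of $E$ along $\Phi$ restricts along these generating sections to an infinitesimal $\g_\II^{(\s)}$-action on $\M$; the hypothesis that $R$ is an $\S$-equivariant Hamiltonian space promotes this to an action of the group $\S$. The moment map is precisely $\Psi=\Phi\colon\M\to\A_\II$, and equivariance of $\Psi$ is the equivariance built into the definition of an $\S$-equivariant Hamiltonian space.

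The final and most substantive step is condition (iii). Since $\T\A_\II$ is standard and we use the trivial splitting $j$, the dual map $j^\star\colon\T\A_\II\to T^\star\A_\II$ is just projection onto the cotangent summand. Applying $j^\star$ to the generator $\varrho(\xi)=\xi_{\A_\II}+\l\d A,\xi\r$ yields $j^\star\varrho(\xi)=\l\d A,\xi\r$. Thus condition (iii), namely $\iota(\xi_\M)\sigma=-\Psi^*(j^\star\varrho(\xi))$, becomes exactly $\iota(\xi_\M)\sigma=-\l\d\Psi,\xi\r$, which is \eqref{eq:momcond}. I expect the main obstacle to be bookkeeping rather than conceptual: one must confirm that the Lie algebroid action of $\E^{(\s)}$ along $\Phi$ restricted to the generators $\varrho(\xi)$ is the \emph{same} as the $\g_\II^{(\s)}$-action appearing in \eqref{eq:momcond}, i.e. that $\varrho(\xi)$ is $R$-related to $\xi_\M$ (the condition $Y_M\sim_R\varrho(Y)$ from Section \ref{subsec:morphisms}), and that the domain subtleties of the unbounded operators $\partial_A$ do not interfere—here one uses that $\xi\in\g_\II^{(\s)}$ precisely forces the boundary conditions making $\varrho(\xi)$ a genuine section. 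Granting this identification, the three equivalences above assemble into the claimed bijective correspondence, which is the desired generalization of \cite[Theorem 8.3]{al:mom}.
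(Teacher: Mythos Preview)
Your proposal is correct and follows essentially the same approach as the paper, which simply states that this is a special case of Proposition~\ref{prop:hamiltonian} together with Example~\ref{ex:exex}. You have unpacked that one-line argument in full detail: the trivial splitting of $\T\A_\II$ gives $\eta=0$ so condition~(i) is closedness of $\sigma$, Example~\ref{ex:exex} upgrades condition~(ii) to $\ker(\sigma)=0$ since $\E^{(\s)}\cap T\A_\II=0$, and condition~(iii) with $j^\star\varrho(\xi)=\l\d A,\xi\r$ is exactly \eqref{eq:momcond}.
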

\begin{proof} 
This is a special case of Proposition \ref{prop:hamiltonian} together with Example \ref{ex:exex}. 
\end{proof}

\subsection{Reduction of the Lie-Poisson structure on the space of connections}
\label{subsec: A reduction}
In this section, we exhibit the Cartan-Dirac structure from Section \ref{sec:carcou}
as a reduction of the Lie-Poisson structure on the space  of connections over the unit interval. 
In Section \ref{sec:conns1}, we give a similar construction for connections over the circle $S^1$.  Since the standard lift of the 
principal $G_{\II,\partial\II}$-action on $\A_\II$ to $\T\A_\II$ has isotropic generators, we use the machinery of Section \ref{subsec:coured} to define a reduced Courant algebroid  $(\T\A_\II)_{red}$ over $G=\A_\II/G_{\II,\partial\II}$. 

\begin{theorem} \label{thm:redu} 
  The reduced Courant algebroid $(\T\A_\II)_\red$ is canonically isomorphic to the Cartan-Courant algebroid $\AA$ over $G$. This isomorphism intertwines the $G\times G\cong G_\II/G_{\II,\partial\II}$-actions together with their generators, and 
restricts to an isomorphism of Dirac structures
\[ ((\T\A_\II)_\red,({\E^{(\s)}})_\red)\cong  (\AA,E^{(\s)})\]
for each Lagrangian Lie subalgebra $\mf{s}\subseteq \dd$. 
In particular, the reduction of the Lie-Poisson structure on $\A_\II$ is the Cartan-Dirac structure on $G$. 
Also, the $G_\II$-basic splitting of $\T \A_\II$ defined by a  principal connection $\theta$ as in Section~\ref{subsec:princon} 
descends to the splitting \eqref{eq:courantsplitting} of the Cartan-Courant algebroid. 
\end{theorem}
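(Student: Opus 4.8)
The plan is to identify the reduced Courant algebroid $(\T\A_\II)_\red$ with the action Courant algebroid $\AA=G\times(\ol\g\oplus\g)$ by exhibiting an explicit $G_\II/G_{\II,\p\II}$-equivariant bundle isomorphism that matches generators, and then to verify the splitting claim separately using Proposition~\ref{prop:basplit} and Proposition~\ref{prop:reductionofexactmorphisms}. The starting point is that the $G_{\II,\p\II}$-action on $\A_\II$ is principal with quotient $\Hol\colon\A_\II\to G$, and that by Proposition~\ref{prop:gene} the generators $\varrho(\xi)$ for $\xi\in\g_{\II,\p\II}$ are isotropic, so Theorem~\ref{th:bcg} applies and $(\T\A_\II)_\red$ is a well-defined Courant algebroid over $G$.

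First I would construct the isomorphism at the level of the coisotropic reduction. With $C=\ran(\varrho|_{\g_{\II,\p\II}})^\perp$, Proposition~\ref{prop:gene} gives $C^\perp=\ran(\varrho|_{\g_{\II,\p\II}})$ and, via \eqref{eq:perp}, identifies $C=\E^{(\dd)}=\ran(\varrho|_{\g_\II})$ fiberwise. Thus $\AA_C=C/C^\perp$ at a connection $A$ is the quotient of $\varrho(\g_\II)|_A$ by $\varrho(\g_{\II,\p\II})|_A$, and the map $\xi\mapsto(\xi(0),\xi(1))$ induces an isomorphism of this quotient with $\ol\g\oplus\g=\dd$, using the short exact sequence $0\to\g_{\II,\p\II}\to\g_\II\to\dd\to0$ of boundary evaluation. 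Passing to the $G_{\II,\p\II}$-quotient over $G$ then gives a bundle map $(\T\A_\II)_\red\to G\times\dd=\AA$. I would check that the induced metric matches: by \eqref{eq:innpr} the reduced inner product of the classes of $\varrho(\xi),\varrho(\zeta)$ is $\xi(1)\cdot\zeta(1)-\xi(0)\cdot\zeta(0)$, which is precisely the metric on $\ol\g\oplus\g$, and that the reduced generators $\varrho_\red$ correspond to the constant generators of the action Courant algebroid. The anchor compatibility follows from \eqref{eq:holdiffact}, which shows $T\Hol$ sends $\varrho(\xi)$'s anchor $\partial_A\xi$ to $\xi(1)^L-\xi(0)^R$, matching the infinitesimal $\dd$-action on $G$; bracket compatibility follows because $\varrho$ is bracket-preserving into $\Gamma(\T\A_\II)$ and the constant sections generate the Courant bracket on the action algebroid.

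For the Dirac-structure statement, I would invoke Remark~\ref{rmks:redugral}(a): since $\ran(\varrho|_{\g_{\II,\p\II}})\subseteq\E^{(\s)}$ always holds (as $\g_{\II,\p\II}=\g_\II^{(\{0\})}\subseteq\g_\II^{(\s)}$ for any $\s$), the closedness condition $\E^{(\s)}+C$ is automatic and the reduction is computed as $(\E^{(\s)})_\red=(\E^{(\s)}/\ran(\varrho|_{\g_{\II,\p\II}}))/G_{\II,\p\II}$. Under the boundary-evaluation isomorphism this is exactly $G\times\s$, i.e.\ the Cartan-Dirac datum $E^{(\s)}$. Taking $\s=\g_\Delta$ recovers the Cartan-Dirac structure, establishing the middle assertions of the theorem.

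The main obstacle—and where I expect to spend the most care—is the final splitting claim. Here I would apply Proposition~\ref{prop:basplit}: the principal connection $\theta$ of Section~\ref{subsec:princon} makes a chosen $G_\II$-invariant isotropic splitting into a $G_{\II,\p\II}$-basic one, and a basic splitting descends to an isotropic splitting $j_\red$ of $(\T\A_\II)_\red\cong\AA$. The content is to show this descended splitting equals \eqref{eq:courantsplitting}. By the correspondence $\eta=\pi^*\eta_\red$ of Section~\ref{subsec:exact-red}, it suffices to compute the 3-form of the basic splitting and match it with the Cartan 3-form $\eta=\tfrac1{12}\theta^L\cdot[\theta^L,\theta^L]$, together with matching the induced 1-form $\alpha$ of \eqref{eq:rhoG}. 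The cleanest route is to evaluate everything at $A=0$, where \eqref{eq:thetaA} identifies the horizontal space with constant $\g$-valued 1-forms and $T_0\Hol$ sends a constant $Xdt$ to $X^L=X^R$ at the identity $e\in G$; one then checks that the basic splitting sends $X\in T_eG=\g$ to the class of $\varrho(\xi)$ with $\xi(t)=tX$ (so $\xi(1)=X,\ \xi(0)=0$), whose boundary data is $(0,X)$. Comparing with the required value $j(X)=\tfrac12(-X,X)\in\dd$ at $e$ forces a normalization check: the discrepancy is absorbed precisely by the $\ran(\a^\star)$-ambiguity in the splitting, i.e.\ by the isotropic-splitting correction $\a^\star(\alpha(\xi))$ of \eqref{eq:alpha}, and verifying this matching is the delicate computational heart of the proof. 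Once the value at $A=0$ is confirmed, $G_\II$-invariance of both $\theta$ and the Cartan splitting propagates the identification over all of $G$.
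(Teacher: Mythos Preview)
Your argument for the identification $(\T\A_\II)_\red\cong\AA$ and for the reduction of the Dirac structures $\E^{(\s)}$ is essentially the paper's proof: both use \eqref{eq:perp} to write $C=\varrho(\A_\II\times\g_\II)$ and $C^\perp=\varrho(\A_\II\times\g_{\II,\p\II})$, identify $C/C^\perp$ with $\ol\g\oplus\g$ via boundary evaluation, check the metric via \eqref{eq:innpr}, the anchor via \eqref{eq:holdiffact}, and then observe $\g_\II^{(\s)}/\g_{\II,\p\II}=\s$.

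For the splitting claim your outline is in the right direction but stops just short of the actual content. Your phrase ``the discrepancy is absorbed precisely by the $\ran(\a^\star)$-ambiguity'' is misleading: there is no ambiguity in $j_\red$. The point is that $j_\red(X^L)$ and $\varrho_\red(0,X)$ have the same anchor image, so their difference lies in $\ran(\a_\red^\star)$, but it is a \emph{specific} element that must be computed. Matching the $3$-form alone does not pin down the splitting (two splittings with the same $3$-form differ by a closed $2$-form), so your proposed route via $\eta=\pi^*\eta_\red$ is insufficient. The paper makes this precise by introducing $\beta\colon\g\to\Omega^1(G)$ defined by
\[
\varrho_\red(0,X)-j_\red(X^L)=\a_\red^\star\beta(X),
\]
lifting this identity to $\A_\II$ via \eqref{eq:A2}, and then evaluating both sides on the horizontal vectors $\zeta_{\A_\II}$ coming from paths $\zeta(t)=\chi(t)\Ad_{g(t)^{-1}g(0)}Z$. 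The integral $\int_0^1(\partial\chi/\partial t)\chi\,dt=\tfrac12$ gives $\beta(X)=\tfrac12 X\cdot\theta^L$, hence $j_\red(X^L)=(0,X)-\tfrac12(X,X)=\tfrac12(-X,X)$, which is exactly \eqref{eq:courantsplitting}. This explicit computation is the ``delicate heart'' you allude to; once you carry it out, your proof is complete and coincides with the paper's.
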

%
\begin{proof}
The map $G_\II\to G\times G,\ k\mapsto (k(0),k(1))$  descends to an identification
\[ G_\II/G_{\II,\partial\II}=G\times G,\ \ \  \g_\II/\g_{\II,\partial\II}=\ol\g\oplus \g.\]
Let $C=\varrho(\A_\II\times \g_\II)$, thus  $C^\perp=\varrho(\A_\II\times \g_{\II,\partial\II})$
by  \eqref{eq:perp}. By definition,  $(\T\A_\II)_\red=(C/C^\perp)/G_{\II,\partial\II}$. Since the action of $G_{\II,\partial\II}$ on 
$\g_\II/\g_{\II,\partial\II}$ is trivial, it follows that $(\T\A_\II)_\red$ is an action Courant algebroid 
\[ (\T\A_\II)_\red=G\times \g_\II/\g_{\II,\partial\II}=G\times (\ol{\g}\oplus \g)\]
with the constant sections as the reduced generators  $\varrho_\red\colon  \g_\II/\g_{\II,\partial\II}\to \Gamma(\T\A_\II)_\red$. The action of $[k]\in G_\II/G_{\II,\partial\II}$ on $G$ is induced from the action of $k\in G_\II$ on $\A_\II$, and is given by $[k].a=k(0)ak(1)^{-1}$, by the equivariance property \eqref{eq:equivhol} of the holonomy map. This shows that 
$ (\T\A_\II)_\red$ is the Cartan-Courant algebroid $\AA$, where the isomorphism intertwines the actions and the generators. 
Since $\g_\II^{(\s)}/\g_{\II,\partial\II}=\s$, it is immediate that  ${\E^{(\s)}}=\varrho(\A_\II\times \g_\II^{(\s)})$ has reduction $E^{(\s)}=G\times \mf{s}$.

We now verify the reduction of splittings.
As in Section \ref{subsec:exact-red}, let $\varpi\in \Omega^2(\A_\II)$ be the 2-form determined by the principal connection $\theta$. 
In the notation from that section,
\begin{equation}\label{eq:alphac}
 \alpha(\xi)=\l\d A,\,\xi\r,\ \ \ c(\xi,\xi')=\iota_{\xi_{\A_{\II}}}\alpha(\xi')=\l\partial_A\xi,\xi'\r,
\end{equation}
for $\xi,\xi'\in \g_{\II,\partial\II}$, hence 
\[ \varpi=- \l\d A,\,\theta\r+\hh \l \partial_A\theta,\, \theta\r,\]
defining the $G_{\II,\partial\II}$-basic splitting 
$ j\colon T\A_\II\to \T \A_\II$ via $j(a)=a+\iota(a)\varpi$. 
Let $j_\red\colon TG\to (\T\A_\II)_\red$ the reduced splitting. 
To compute it, let $\beta\colon \g \to \Omega^1(G)$ be the map given as 
\begin{equation}\label{eq:A1}
\varrho_\red(0,X)-j_\red(X^L)=\a_\red^*(\beta(X))\end{equation}
for all $X\in\g$, with $\a_\red\colon  (\T\A_\II)_\red=\AA\to TG$ the reduced anchor. 
Then 
\begin{equation}\label{eq:A2} \varrho(\xi)-j(\xi_{\A_{\II}})=\Hol^*\beta(\xi(1))\end{equation}
for all $\xi\in\g_\II$ with $\xi(0)=0$. We use \eqref{eq:A2} to compute the map $\beta$, which then determines 
$j_\red$ via \eqref{eq:A1}.  
Let $\theta$ be obtained from the function $\chi\in C^\infty(\II)$ with $\chi(0)=0,\ \chi(1)=1$, as in Section \ref{subsec:princon}. 
Given $X,\,Z\in\g$, let 
\[ \xi(t)=\chi(t)\Ad_{g(t)^{-1}g(0)}X,\ \ \ \ \ \zeta(t)=\chi(t)\Ad_{g(t)^{-1}g(0)}Z.\]
Then $\xi,\zeta$ are the unique paths from $0$ to $X,Z$ such that  $\xi_{\A_{\II}}|_A,\ \zeta_\A|_A$ are  horizontal with respect to $\theta|_A$.  
With this choice of $\xi$, we obtain 
\[ \iota(\xi_{\A_{\II}})\varpi=-\l \xi_{\A_{\II}},\theta\r=-\l \partial_A\xi,\theta\r,\]
hence $j(\xi)=\xi_{\A_{\II}}-\l \partial_A\xi,\theta\r$. It follows that 
$\Hol^*\beta(X)= \l\d A,\,\xi\r     -\l \partial_A\xi,\theta\r$,
thus
\begin{equation}
\iota(\zeta_\A)\Hol^*\beta(X)=
 \l \partial_A \zeta,\xi\r
=Z\cdot X \int_0^1 \f{\p \chi}{\p t}\chi(t)dt=\hh Z\cdot X.
 \end{equation}
Since $\zeta_\A\sim_{\Hol} Z^L$, the left hand side can also be written 
$\Hol^*\iota(Z^L)\beta(X)$. We conclude $\beta(X)=\hh X\cdot \theta^L$, and hence 
\( j_\red(X^L)=\varrho_\red(0,X)-\hh \a_\red^* \theta^L\cdot X.\)
This is consistent with the formulas \eqref{eq:courantsplitting} for the Cartan-Courant algebroid, proving that the two splittings coincide. 
\end{proof}

\begin{samepage}
\begin{remark}\label{rmks: red}\mbox{}
  \begin{enumerate}
  \item The above theorem holds for all regularities $r\geq 0$ imposed
    on the connections $\A_\II$. It thus shows that the reduction
    $(\T\A_\II)_\red$ is insensitive to the chosen regularity $r\geq 0$.

\item 
 As shown in \cite{al:ati}, the 2-form $\varpi\in \Omega^2(\A_\II)^{G_\II}$ determined by the standard connection $\theta$ 
 on the holonomy fibration is given by the formula
 \[ \varpi=\hh \int_{[0,1]} \Hol_s^*\theta^R \cdot \f{\p}{\p
    s}(\Hol_s^*\theta^R) \d s \in \Omega^2(\A_\II)^{G_\II},\]
    where $\theta^R \in \Omega^1(G,\g)$ is the right invariant Maurer-Cartan $1$-form on $G$, 
and $\Hol_s\colon \A_\II\to G$ is given by $\Hol_s(A)=g(s)$, where $g\in G_{\II}$ is the parallel transport  for $A$, i.e.~ $g(0)=e$ and $A=g^*\theta^L$.  This $2$-form $\varpi$ also appears in \cite[Section 8.1]{al:mom}. 
\item The $(G\times G)$-equivariant splittings of the Cartan-Courant algebroid form an affine space for the vector space of bi-invariant 2-forms on the base $G$. If $G$ is compact 
or semi-simple, then the space $\Omega^2(G)^{G\times G}=(\wedge^2\g^*)^G$ is zero. Hence, in this case 
\emph{any} $G_\II$-invariant connection 1-form $\theta$ on $\A_\II$ will lead to the same 
2-form $\varpi$, and to the same reduced 
splitting of 
$(\T \A_\II)_\red=\AA$. 
\end{enumerate}
\end{remark}
\end{samepage}



\subsection{Reduction of Hamiltonian spaces} 
\label{subsec: ham LG} 
Let $S\subseteq D= \ol{G}\times G$ be a Lie subgroup whose Lie algebra $\mathsf{s}\subseteq \dd=\ol{\g}\oplus \g$ is Lagrangian. Consider the subgroup 
\[ \ca{S}\equiv G_\II^{(S)}\subseteq G_\II\] 
consisting of paths $g\in G_\II$ with endpoints $(g(0),g(1))\in S$. 
The group $\S$ contains $G_{\II,\partial\II}$ as a normal subgroup, with quotient 
$\S/G_{\II,\partial\II}=S$.
As a special case of the general result concerning reduction of  Dirac structures (Proposition \ref{prop:Diracred}), we obtain: 
\begin{proposition}\label{prop:LG}
Reduction by the action of $G_{\II,\partial\II}$ defines a 1-1 correspondence between 
 Hamiltonian $\S$-spaces $\M$ for $(\T\A_\II,{\E^{(\s)}})$ and 
 Hamiltonian $S$-spaces $M$ for $(\AA,E^{(\s)})$. The spaces and moment maps 
are related by the commutative diagram
\[ \xymatrix@C=8ex{ \M \ar[r]^\Psi\ar[d]^{\pi} & \A_\II \ar[d]^{\Hol} \\ M\ar[r]^{\Phi} & G }\]   
Here $\pi\colon \M\to M$ is the quotient by the action of $G_{\II,\partial\II}$. The correspondence preserves exactness. 
\end{proposition}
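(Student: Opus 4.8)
The plan is to apply the reduction machinery of Subsection~\ref{subsec:equiv} with $G_{\II,\partial\II}$ as the reducing group, the source Courant algebroid $\AA_1=\T\M$ carrying the standard lift of the $G_{\II,\partial\II}$-action, and the target $\AA_2=\T\A_\II$. Recall that $G_{\II,\partial\II}$ is a normal subgroup of $\S$ with quotient $S$. A Hamiltonian $\S$-space $\M$ is by definition a Dirac morphism $R\colon(\T\M,T\M)\da(\T\A_\II,\E^{(\s)})$ with $\S$-equivariant base map $\Psi\colon\M\to\A_\II$. Since $\Psi$ is in particular $G_{\II,\partial\II}$-equivariant and the $G_{\II,\partial\II}$-action on $\A_\II$ is principal (it is the holonomy fibration), equivariance forces the $G_{\II,\partial\II}$-action on $\M$ to be principal as well; hence $M=\M/G_{\II,\partial\II}$ is a manifold and $\pi\colon\M\to M$ a principal bundle, exactly as in the special case of Remark~\ref{rem:Rred}.

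First I would verify the hypotheses of Proposition~\ref{prop:Diracred} for the reduction by $G_{\II,\partial\II}$. The generators $\varrho_1(\xi)=\xi_\M$ for $\T\M$ are the generating vector fields, so $\ran(\varrho_1)\subseteq T\M=E_1$; in particular they are isotropic, condition~\eqref{eq:technical} holds automatically by the second case of Remark~\ref{rem:Rred}, and $E_1+\ran(\varrho_1)=T\M$ is closed. The generators $\varrho(\xi)$ with $\xi\in\g_{\II,\partial\II}$ for $\T\A_\II$ are isotropic by~\eqref{eq:innpr} (their endpoints vanish), and since $\g_{\II,\partial\II}\subseteq\g_\II^{(\s)}$ one has $\ran(\varrho_2)\subseteq\E^{(\s)}=E_2$, whence $E_2+\ran(\varrho_2)=\E^{(\s)}$ is closed. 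The relation $\xi_\M\sim_R\varrho(\xi)$ for $\xi\in\g_\II^{(\s)}$ is part of the Hamiltonian $\S$-space structure, and its restriction to $\g_{\II,\partial\II}$ supplies the required intertwining of generators. Proposition~\ref{prop:Diracred} then produces a Dirac morphism $R_\red\colon((\T\M)_\red,(T\M)_\red)\da((\T\A_\II)_\red,(\E^{(\s)})_\red)$. Using that $(\T\M)_\red=\T M$ and $(T\M)_\red=TM$ for the standard lift, together with the identification $((\T\A_\II)_\red,(\E^{(\s)})_\red)\cong(\AA,E^{(\s)})$ of Theorem~\ref{thm:redu}, this is a Hamiltonian space for $(\AA,E^{(\s)})$ with base map $\Phi=\Psi_\red$, giving the asserted commutative square. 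That $M$ is in fact a Hamiltonian $S$-space follows from Remark~\ref{rmks:redugral}(\ref{rmk:normal}) applied to the normal subgroup $G_{\II,\partial\II}$ of $\S$: the hypothesis $\l\varrho(\xi),\varrho(\zeta)\r=0$ for $\xi\in\g_{\II,\partial\II}$, $\zeta\in\g_\II^{(\s)}$ is immediate from~\eqref{eq:innpr}, so the residual $S=\S/G_{\II,\partial\II}$-action descends with its generators and $R_\red$ is $S$-equivariant.

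For the inverse I would reconstruct $\M$ from a Hamiltonian $S$-space $M$ by pulling back the principal bundle: set $\M=\Phi^*\A_\II=M\times_G\A_\II$, a principal $G_{\II,\partial\II}$-bundle over $M$, on which $\S$ acts by $k\cdot(m,A)=([k]\cdot m,\,k\cdot A)$. This is well defined because $\Phi$ is $S$-equivariant and $\Hol$ is $\S$-equivariant via~\eqref{eq:equivhol}, it restricts to the principal action of $G_{\II,\partial\II}$, and $\M/G_{\II,\partial\II}=M$. Since $\T\M$ and $\T\A_\II$ are exact, Theorem~\ref{th:Rred}(c) gives a bijection between Courant morphisms $\T M\da\AA$ and $G_{\II,\partial\II}$-equivariant Courant morphisms $\T\M\da\T\A_\II$ intertwining the generators; let $R_\M$ be the lift of the given $R_M$. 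The remaining point is to promote $R_\M$ to a Dirac morphism $(\T\M,T\M)\da(\T\A_\II,\E^{(\s)})$, that is, to \emph{unreduce} the Dirac-morphism property of $R_M$. I expect this to be the main obstacle: one must show that for every $x_2\in\E^{(\s)}$ there is a unique $x_1\in T\M$ with $x_1\sim_{R_\M}x_2$. This follows by tracking relations through the reduction morphisms $q_i$, using $q_2\circ R_\M=R_M\circ q_1$ from Theorem~\ref{th:Rred}(b) together with the facts that $q_1$ carries $T\M$ onto $TM$ and $q_2$ carries $\E^{(\s)}$ onto $E^{(\s)}$; existence and uniqueness upstairs then reduce to those for $R_M$, by the same bookkeeping as in the proofs of Theorem~\ref{th:Rred}(b) and Proposition~\ref{prop:Diracred}. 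Mutual inverseness follows at once: reducing $\Phi^*\A_\II$ returns $M$, while on morphisms the two passages are inverse by the bijection of Theorem~\ref{th:Rred}(c).

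Finally, the correspondence respects exactness. In the forward direction the reduction of an exact Dirac morphism is exact by Proposition~\ref{prop:Diracred1}. In the backward direction, exactness is the fullness condition of Proposition~\ref{prop:alleq}(c), which is preserved under the unreduction isomorphism of Theorem~\ref{th:Rred}(c) just as the Dirac-morphism property is; combined with Proposition~\ref{prop:Diracred1} applied to $R_\M$, this shows $R_\M$ is exact if and only if $R_M$ is. This completes the bijection and establishes that it preserves exactness.
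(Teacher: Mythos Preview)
Your proof is correct and follows essentially the same approach as the paper: both argue that the $G_{\II,\partial\II}$-action on $\M$ is principal by equivariance of $\Psi$, invoke Proposition~\ref{prop:Diracred} (with the generators contained in $E_1=T\M$ and $E_2=\E^{(\s)}$, as you carefully check) to reduce the Dirac morphism, construct the inverse via the pullback $\M=\Phi^*\A_\II$, and appeal to Theorem~\ref{th:Rred}(c) together with Proposition~\ref{prop:Diracred1} for the bijection and the preservation of exactness. You are in fact more explicit than the paper about the need to verify the Dirac-morphism property for the lifted Courant morphism in the inverse direction; the paper simply cites Theorem~\ref{th:Rred}(c) for ``how to recover $\ca{R}$'' and leaves that verification implicit.
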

\begin{proof} 
For any $\S$-equivariant map $\Psi\colon \M\to \A_\II$, since the action of 
$G_{\II,\partial\II}\subset \S$ on $\A_\II$ is a principal action, the action on $\M$ is a principal action. Taking quotients by $G_{\II,\partial\II}$, one obtains a manifold $M$ with an  
$S=\S/G_{\II,\partial\II}$-equivariant map $\Phi$ to $G=\A_\II/G_{\II,\partial\II}$.
Conversely, given $M$ with an $S$-equivariant map $\Phi\colon M\to G$,  define $\M\subseteq M\times \A_\II$ as the pullback of the principal bundle $\Hol\colon \A_\II\to G$ under the map $\Phi$. The diagonal $\S$-action on $M\times \A_\II$ (where the action on $M$ is via the quotient map to $S$) restricts to an action on $\M$, and the projection to the second factor restricts to an $\S$-equivariant map $\Psi\colon \M\to \A_\II$. 
Suppose now that  
\[ \ca{R}\colon (\T\M,T\M)\da (\T\A,{\E^{(\s)}})\] 
is a Dirac morphism, 
with base map $\Psi$.  According to Proposition \ref{prop:Diracred}, the reduction by  $G_{\II,\partial\II}$ gives an $S$-equivariant Dirac morphism 
\[ R=\ca{R}_\red\colon 
(\T M,TM)\da ((\T\A_\II)_\red,({\E^{(\s)}})_\red)\cong  (\AA,E^{(\s)})\]
with base map $\Phi=\Psi_\red\colon M\to G$. By Proposition \ref{prop:Diracred1},  the morphism $\ca{R}$ is exact if and only if $R$ is exact. Conversely, given the  Dirac morphism $R\colon (\T M,TM)\da (\AA,E^{(\s)})$, part (c) of Theorem \ref{th:Rred} shows how to recover $\ca{R}$. 
\end{proof}
Note that if the moment map $\Psi$ is proper, then so is $\Phi$. In this case, 
the finite-dimensionality of $G$ implies finite-dimensionality of $M$. 
 
Recall that the exact Hamiltonian spaces for $(\T\A_\II,\E^{(\s)})$ are described by triples $(\M,\sigma,\Psi)$ (see Proposition \ref{prop:HamA}), while those for $(\AA,E^{(s)})$ are described 
by triples $(M,\om,\Phi)$ (see Proposition \ref{prop:qh}). Under the correspondence from Proposition \ref{prop:LG}, these are related as follows. 
Let $\varpi\in \Omega^2(\A_\II)$ be the $G_\II$-invariant $2$-form defined by the standard connection $\theta$ on  the holonomy fibration. 
\begin{proposition}
Let $(\M,\sigma,\Psi)$ be an exact Hamiltonian $\S$-space for $(\T\A_\II,\E^{(\s)})$,  and $(M,\om,\Phi)$ the corresponding exact Hamiltonian $S$-space for $(\AA,E^{(s)})$. Then  
\begin{equation}\label{eq:omsigma} \sigma=\pi^*\om+\Psi^*\varpi.\end{equation}
\end{proposition}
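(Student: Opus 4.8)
The plan is to obtain the identity as a direct consequence of the reduction formula for exact Courant morphisms, Proposition \ref{prop:reductionofexactmorphisms}, rather than by computing $\sigma$ and $\om$ independently. Recall from the correspondence set up in Proposition \ref{prop:LG} that the exact Hamiltonian $\S$-space $(\M,\sigma,\Psi)$ is encoded by the exact Dirac morphism $\ca{R}=\T\Psi_\sigma\colon (\T\M,T\M)\da (\T\A_\II,\E^{(\s)})$ (via Propositions \ref{prop:HamA} and \ref{prop:qh}), and that its $G_{\II,\partial\II}$-reduction is the exact Dirac morphism $R=\ca{R}_\red=\T\Phi_\om$ encoding $(M,\om,\Phi)$. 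Thus $\sigma$ and $\om$ are precisely the $2$-forms attached to an exact Courant morphism and to its reduction, which is exactly the situation governed by Proposition \ref{prop:reductionofexactmorphisms}; it therefore suffices to identify the auxiliary data $\eta_i,\varpi_i$ of that proposition in the present setting and read off the formula $\pi_1^*\om_\red=\om+\varpi_1-\Phi^*\varpi_2$.

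First I would record the splittings and the two $2$-forms $\varpi_i$. Both $\T\M$ and $\T\A_\II$ are \emph{standard} Courant algebroids, carrying their tautological splittings with vanishing $3$-forms $\eta_1=\eta_2=0$. On $\T\A_\II$ the isotropic generators for the $G_{\II,\partial\II}$-action are $\varrho(\xi)=\xi_{\A_\II}+\l\d A,\xi\r$ with $\xi\in\g_{\II,\partial\II}$, so in the notation of Proposition \ref{prop:burdi} one has $\alpha_2(\xi)=\l\d A,\xi\r$; the corresponding $2$-form produced by \eqref{eq:varpi} for the standard connection $\theta$ is exactly $\varpi_2=\varpi$, as computed in the proof of Theorem \ref{thm:redu}. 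On $\T\M$, the $G_{\II,\partial\II}$-action is the \emph{standard lift} of the action on $\M$, whose isotropic generators are the bare generating vector fields $\varrho_1(\xi)=\xi_\M$, carrying no cotangent component; hence $\alpha_1=0$, and consequently the $2$-form $\varpi_1$ produced by \eqref{eq:varpi} vanishes for \emph{any} principal connection $\theta_1$ on $\M\to M$. Such a connection exists because the $G_{\II,\partial\II}$-action on $\M$ is principal, by equivariance of $\Psi$ and principality of the holonomy fibration.

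Finally I would verify the hypotheses of Proposition \ref{prop:reductionofexactmorphisms}: that $\ca{R}$ is $G_{\II,\partial\II}$-equivariant (immediate from $\S$-equivariance) and that it intertwines the generators, i.e.~$\xi_\M\sim_{\ca{R}}\varrho(\xi)$ for $\xi\in\g_{\II,\partial\II}$. By the description \eqref{eq:morr} of $\T\Psi_\sigma$, this relation is equivalent to $\xi_{\A_\II}=\Psi_*\xi_\M$ (equivariance of $\Psi$) together with $\iota(\xi_\M)\sigma=-\Psi^*\l\d A,\xi\r=-\l\d\Psi,\xi\r$, which is precisely the moment map condition \eqref{eq:momcond}. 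With all hypotheses in place, the reduction formula gives
\[
\pi^*\om=\sigma+\varpi_1-\Psi^*\varpi_2=\sigma-\Psi^*\varpi,
\]
which rearranges to the claimed $\sigma=\pi^*\om+\Psi^*\varpi$. The single point demanding care — and the main obstacle — is the vanishing $\varpi_1=0$: everything hinges on the generators of the standard lift on $\T\M$ being the plain vector fields $\xi_\M$ (so $\alpha_1=0$), which is what renders the reduced $2$-form independent of the auxiliary connection on $\M$ and produces the asymmetric-looking formula in which $\varpi$ enters only through $\Psi$.
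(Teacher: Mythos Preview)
Your proof is correct and follows essentially the same route as the paper's: both identify $\ca{R}=\T\Psi_\sigma$ and $R=\T\Phi_\om$ and deduce \eqref{eq:omsigma} from the change-of-splitting behaviour of exact Courant morphisms under reduction (Proposition~\ref{prop:reductionofexactmorphisms}), using that the $\varpi$-twist of the standard splitting on $\T\A_\II$ descends to the Cartan splitting of $\AA$. Your write-up is more explicit than the paper's about why $\varpi_1=0$ on $\M$ (the generators for the standard lift are the bare vector fields $\xi_\M$, so $\alpha_1=0$ and the standard splitting is already $G_{\II,\partial\II}$-basic) and about verifying the intertwining condition via the moment map equation~\eqref{eq:momcond}; the paper leaves these points implicit.
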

\begin{proof}
In terms of the splittings, we have $\ca{R}=\T\Psi_\sigma$ and $R=\T\Phi_\om$, for an 
$\S$-invariant 2-form $\sigma\in \Omega^2(\M)$ and an $S$-invariant 2-form $\om\in \Omega^2(M)$. 
Since the $\varpi$-twist of the standard splitting of $\T\A_\II$
descends to the splitting \eqref{eq:courantsplitting} of $\AA$, these 2-forms are related by 
\eqref{eq:omsigma}. 
\end{proof}

\subsection{Multiplicative structures}\label{subsec:connmult}
\alejandro{This is the new subsection obtaining $\Mult_\AA$ from connections on the Disk.}
In this subsection, we obtain the multiplicative structures $\Mult_\AA$ and $\Inv_\AA$ on the Cartan-Courant algebroid $\AA$ described in Section   \ref{subsec:mult} as a reduction from appropriate spaces of connections.

We begin describing how to get group multiplication $\Mult_G: G\times G \to G$ in terms of spaces of connections. Let $\M$ denote the space of \emph{flat} $G$-connections of class $\HH_k$ on the trivial principal $G$-bundle over a triangle $\DD \subset \R^2$ (i.e. a $2$-simplex), with $k>1$. Following \cite[Section 9.1]{al:mom}, $\M$ is a smooth infinite dimensional Hilbert manifold on which the Hilbert Lie group $G_\DD = Map_{\HH_{k+1}}(\DD,G)$ acts by gauge transformations.

Let $z_0,z_1,z_2\in \bd \DD$ be the cyclically oriented  vertices of the 2-simplex. ($\bd \DD$ is taken positively oriented w.r.t. $\DD$.) We thus define a map 
\[ \Phi: \M \to \A_\II \times \A_\II \times \A_\II, A \mapsto ( \bar{\gamma}_2^*A, \gamma_0^*A, \gamma_1^*A)\]
where $\gamma_i:[0,1] \to \bd \DD$ is an orientation preserving parameterization of the edge $[z_i,z_{i+1}]\subset \bd \DD$, for $i=0,1,2$ ($z_3=z_0$), and we denoted $\bar{\gamma}(t)=\gamma(1-t)$. Here, we take $\A_\II$ with regularity $r=k-1/2$ so that $\Phi$ is smooth because $k>1$. If we consider the subgroup $G_{\DD,Z} = \{ g \in G_\DD : g(z_i)=e\}$ acting on $\M$ and $G_{\II,\bd \II} \times G_{\II,\bd \II}\times G_{\II,\bd \II}$ acting on $(\A_\II)^3$, the map $\Phi$ is equivariant relative  to the group homomorphism $f:g \mapsto ( \bar{\gamma}_2^*g, \gamma_0^*g, \gamma_1^*g)$. The induced map 
\[\Phi_\red:M:=\M/G_{\DD,Z} \to (\A_\II)^3/(G_{\II,\bd \II})^3 \simeq G \times G \times G\] 
is an embedding of $M\simeq G^2$ inside $G^3$ satisfying 
\begin{equation}\label{eq:multG}
 \Phi_\red(M) = \on{gr}(\Mult_G)=\{(k,g,h)\in G^3: ghk^{-1}=e\},
\end{equation}
since the holonomy around $\bd \DD$ of a flat connection on $\DD$ is trivial.


At the level of Courant algebroids, the map $\Phi$ can be supplemented with the Atiyah-Bott presymplectic $2$-form $\sigma \in \Omega^2(\M)$ (\cite{at:mo}). It has the following property (see e.g. \cite[Section 9.1]{al:mom}), for $\xi \in \g_\DD = \Omega^0_{\HH_{k+1}}(\DD,\g)$ inducing the infinitesimal gauge transformation $\xi_\M|_A \in T_A \M$,
\begin{equation}\label{eq:sig}
 i_{\xi_\M|_A} \sigma = -\int_{\bd \DD} A \cdot \xi = \int_I  \bar{\gamma}_2^*(A\cdot \xi) - \int_I  \gamma_0^*(A\cdot \xi)-\int_I  \gamma_1^*(A\cdot \xi).
\end{equation}
The induced exact Courant morphism
\[ \T\Phi_\sigma : \T \M \da \T \A_\II \times  \T \A_\II \times  \T \A_\II\]
is thus equivariant relative to $f$ when considering the natural lifted $G_{\DD,Z}$-action on $\T \M$ and the $(G_{\II,\bd \II})^3$-action defined by $\varrho \times \bar{\varrho}\times \bar{\varrho}$ on $(\T \A_\II)^3$. (Here $\bar{\varrho}(\eta) = \eta_\A - \langle \d A,\eta\rangle$, for $\eta \in \g_{I}$, corresponds to the generators associated to opposite metric on $\g$.)
We can thus apply Thm \ref{th:Rred} and reduce the (exact) Courant morphism $\T\Phi_\sigma$ to a (exact) Courant morphism 
\[ (\T\Phi_\sigma)_\red: \T M \da \AA \times \bar{\AA} \times \bar{\AA}.\] The Courant analogue of eq. \eqref{eq:multG} is the following:

\begin{proposition}
 With the notations above,
 \[\on{gr}(\T\Phi_\sigma)_\red \circ TM = \on{gr}(\Mult_\AA),\]
 where $\Mult_\AA:\AA \times \AA \da \AA$ was defined in Section \ref{subsec:mult}.
\end{proposition}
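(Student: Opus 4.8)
The plan is to rewrite the left-hand composition in unreduced terms, produce enough of its elements from the intertwined generators, and then invoke that a Lagrangian subbundle contained in another must coincide with it. Throughout I write $\AA^{(3)}=\AA\times\ol{\AA}\times\ol{\AA}$ and denote by $q_1\colon\T\M\da\T M$ and $q_2\colon(\T\A_\II)^3\da\AA^{(3)}$ the two reduction morphisms, using Theorem~\ref{thm:redu} to identify $(\T\A_\II)_\red\cong\AA$ on each factor (with the sign of the metric reversed on the second and third factors, as dictated by $\bar\varrho$).

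First I would move to the source. Since the $G_{\DD,Z}$-action on $\T\M$ is the standard lift of the action on $\M$, its isotropic generators are $\varrho_1(\xi)=\xi_\M\in T\M$, and Remark~\ref{rem:Rred} gives $q_1\circ T\M=TM$. Combining this with the naturality statement $q_2\circ\T\Phi_\sigma=(\T\Phi_\sigma)_\red\circ q_1$ of Theorem~\ref{th:Rred}(b) and associativity of (weakly transverse) compositions yields
\[ (\T\Phi_\sigma)_\red\circ TM=(\T\Phi_\sigma)_\red\circ q_1\circ T\M=q_2\circ\big(\T\Phi_\sigma\circ T\M\big). \]
Here one must check that these compositions are weakly transverse: this is exactly where the nondegeneracy $\ker(\sigma)\cap\ker(T\Phi)=0$ built into the Atiyah--Bott data enters (cf.\ condition~(b) of Proposition~\ref{prop:hamiltonian}), guaranteeing that $\T\Phi_\sigma\circ T\M$ is a genuine Dirac structure along $\Phi(\M)$.

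Next I would generate the target. By construction $\T\Phi_\sigma$ intertwines the generators, Equation~\eqref{intertwgen}, which is precisely the content of the Atiyah--Bott identity~\eqref{eq:sig}: for every $\xi\in\g_\DD$ one has $\xi_\M\sim_{\T\Phi_\sigma}\varrho_2(f(\xi))$ with $f(\xi)=(\bar\gamma_2^*\xi,\gamma_0^*\xi,\gamma_1^*\xi)$. As $\xi_\M\in T\M$, this shows $\varrho_2(f(\xi))\in\T\Phi_\sigma\circ T\M$; and since $\varrho_2(f(\xi))$ lies in the coisotropic bundle $C_2=\ran(\varrho_2)^\perp$, it descends under $q_2$. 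Writing $(Y_0,Y_1,Y_2)=(\xi(z_0),\xi(z_1),\xi(z_2))$ for the vertex values and reading off the endpoints of the three edge-restrictions, the reduced element has $\dd$-components
\[ \big((Y_0,Y_2),\,(Y_0,Y_1),\,(Y_1,Y_2)\big)\in\dd\times\dd\times\dd \]
over $\Phi_\red([A])=(gh,g,h)\in\gr(\Mult_G)$, using~\eqref{eq:multG}. Comparing with the pair-groupoid structure $\dd\rra\g$ of Section~\ref{subsec:mult}, this is a general element of $\gr(\Mult_\AA)$; and since the vertex-restriction $\g_\DD\to\g^3,\ \xi\mapsto(Y_0,Y_1,Y_2)$, is onto, these elements fill out the whole fibre. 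Hence $\gr(\Mult_\AA)\subseteq(\T\Phi_\sigma)_\red\circ TM$.

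Finally I would close the argument by a dimension-free Lagrangian comparison. Both $(\T\Phi_\sigma)_\red\circ TM$ and $\gr(\Mult_\AA)$ are Lagrangian subbundles of $\AA^{(3)}$ along $\gr(\Mult_G)$ (the former by Theorem~\ref{th:Rred}(a), the latter by Section~\ref{subsec:mult}). Taking orthogonals in the inclusion above and using $L=L^\perp$ for each, $(\T\Phi_\sigma)_\red\circ TM=\big((\T\Phi_\sigma)_\red\circ TM\big)^\perp\subseteq\gr(\Mult_\AA)^\perp=\gr(\Mult_\AA)$, which together with the reverse inclusion forces equality. I expect the main obstacle to be the bookkeeping in the second paragraph --- verifying weak transversality of the three compositions and the identity $q_1\circ T\M=TM$ in the infinite-dimensional setting --- rather than the generator computation, which is essentially forced once the intertwining~\eqref{eq:sig} is available. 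A secondary point requiring care is matching the order-and-sign conventions on the opposite-metric factors $\ol{\AA}$, so that the displayed triple is literally $\gr(\Mult_\AA)$ and not a reparametrization of it.
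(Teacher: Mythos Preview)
Your proposal is correct and follows essentially the same approach as the paper: both factor the reduced composition through the quotient morphisms to rewrite it as $q_2\circ\T\Phi_\sigma\circ T\M$, both exploit the intertwining identity~\eqref{eq:sig} to exhibit the elements $((Y_0,Y_2),(Y_0,Y_1),(Y_1,Y_2))$ over $(gh,g,h)$, and both conclude with a Lagrangian comparison. The only cosmetic difference is the direction of the inclusion argued: the paper states it will show $R\circ TM\subseteq\gr(\Mult_\AA)$ (implicitly using that the gauge action on flat connections over the disk is transitive, so every tangent vector to $\M$ is some $\xi_\M|_A$), whereas you explicitly argue the reverse inclusion and then take orthogonals; since both sides are Lagrangian in the finite-dimensional bundle $\AA^{(3)}|_{\gr(\Mult_G)}$, either inclusion suffices.
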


\begin{proof}
We shall denote $q_M : \T \M \da \T M$ and $q_\AA : \T \A_\II \da \AA$ the quotient relations and $R=(\T\Phi_\sigma)_\red$. (Recall that $\varrho(\xi_\A|_A) \sim_{q_\AA} (\on{Hol}(A), \xi(0)\oplus \xi(1))$.) It is clear that $R \circ TM = R \circ q_M \circ T\M = (q_{\AA}\times q_{\bar{\AA}} \times q_{\bar{\AA}})\circ \T\Phi_\sigma \circ T\M$ at the set-theoretic level.
 Since the r.h.s. in the Proposition is Lagrangian, we only need to show that $R\circ TM$ is included in this set. This, in turn, follows from the fact that, given $g,h \in G$ and $X_i \in \g, i=0,1,2,$ one can find $A\in \M$ and $\xi \in \g_\DD$ so that $[A]\simeq (gh,g,h)$ and $\xi(z_i)= X_i$. For, then, using eq. \eqref{eq:sig}, $\xi_\M|_A \in T\M$ is related by $(q_{\AA}\times q_{\bar{\AA}} \times q_{\bar{\AA}})\circ\T\Phi_\sigma $ to $(gh, X_0\oplus X_2) \times (g,X_0 \oplus X_1) \times (h, X_1 \oplus X_2)$ as wanted. 
\end{proof}

\begin{remark}
The basic splitting of $\T\A_\II$ given in Thm. \ref{thm:redu} can be used to induce a splitting of $\T(\A_\II)^3$ which is basic for $\varrho\times\bar{\varrho}\times\bar{\varrho}$.
Following Prop. \ref{prop:reductionofexactmorphisms}, the reduced splitting takes the reduced exact Courant morphism $(\T\Phi_\sigma)_\red$ to the form $\T \Phi_{\red,\sigma_{red}}$ for an induced $2$-form $\sigma_\red \in \Omega^2(\M_\red)$. Using the identification $\M_\red\simeq G^2, \ [A]\mapsto (\Hol(\gamma_0^*A),\Hol(\gamma_1^*A))$, a straighforward computation shows that $\sigma_\red = \varsigma\in \Omega^2(G\times G)$, the $2$-form introduced in eq. \eqref{eq:varsigma}.
\end{remark}

Finally, we describe the inversion morphism $\Inv_\AA$ as a reduction. The diffeomorphism $\Inv_\II:\A_\II \to \A_\II$, $a(s)ds \mapsto -a(1-s)ds$ is $G_{\II,\bd \II}$-equivariant  with respect to the group homomorphism $g(s)\mapsto g(1-s)$ and covers the group inversion $\Inv_G:G\to G$ along the holonomy fibration $\Hol:\A_\II \to G$. Moreover, the natural lift $\Inv_\A:\T\A_\II \to \T\A_\II$ of $\Inv_\II$ is equivariant for the  actions $\varrho$ and $\bar{\varrho}$, respectively. Recalling the definition of the quotient relation $q_\AA:\T\A_\II \da \AA$ as in the Proof above, the corresponding reduced morphism $(\Inv_\A)_\red:\AA \da \ol{\AA}$ relates
 \[ (g,X_0 \oplus X_1) \sim (g^{-1}, X_1 \oplus X_0).\]
Then $(\Inv_\A)_\red = \Inv_\AA$ coincides with inversion in the groupoid $G\times \dd$ as described in Section \ref{subsec:mult}.

\section{Connections over $S^1$}\label{sec:conns1}
\alejandro{I shortened a bit this first paragraph, in view of the second one}In the previous section, we obtained the Cartan-Courant algebroid on $G$, together with its Cartan-Dirac structure, by reduction along the principal $G_\II$-bundle $\Hol: \A_\II\to G$ for connections on a unit interval.  For applications to moduli spaces of flat connections over surfaces with boundary, one is interested in a modification of this construction using the space of connections over a circle, denoted by $\A_{S^1}$.  
In this case, the group acting on $\A_{S^1}$ is the loop group $LG$ and, unlike the $G_I$-action on $\A_\II$, this action is not transitive.

\alejandro{I filled this in, as a first approximation}In section~\ref{holcircle},  we describe an $L_0G$-bundle $\Hol: \A_{S^1} \to G$ corresponding to the quotient by the based loop group $L_0G$ and introduce a transitive Lie algebroid $R$ over $\A_{S^1}$. Here, $L_0G$ plays the role of $G_{\II,\bd \II}$ and $R$ that of the transitive $g_I$-action on $\A_\II$.
In section~\ref{redbaslop}, we introduce  an $LG$-action on the standard Courant algebroid $\T \A_{S^1}$ and a weak Poisson structure $\E$ analogous to the Lie-Poisson structure on $\A_\II$. Finally, we show that reduction of $(\T\A_{S^1},\E)$ under the $L_0G$-action also yields the Cartan-Dirac structure $(\AA,E)$.

\subsection{The holonomy fibration for the circle}\label{holcircle}
Let $\A_{S^1}=\Omega^1_{\HH_{r}}(S^1,\g)$ be the space of connections on the trivial $G$-bundle over the circle $S^1=\R/\Z$. Let
\begin{equation}\label{eq:lg}
 LG=G_{S^1}:=\Map_{\HH_{r+1}}(S^1,G)
\end{equation}
be the \emph{loop group}; the subgroup $L_0G$ of loops with $\gamma(0)=e$ is the \emph{based loop group}. We then define the \emph{path space} (see Appendix \ref{app:cal} for its relation to the caloron correspondence, which also makes it clear that it is a Hilbert manifold)
\begin{equation}\label{eq:pg} \ca{P}G=\{g\in \on{Map}_{\HH_{r+1}}(\R,G)|\  g(t+1) g(t)^{-1}= g(1)g(0)^{-1}\text{ for all } t\}. 
\end{equation}
The loop group $LG$ acts on $\ca{P}G$ by $(k\cdot g)(t)=g(t)k(t)^{-1}$. This action is a 
principal action, with quotient map $g\mapsto g(1)g(0)^{-1}$. The principal action commutes with the 
$G$-action on $\ca{P}G$ by pointwise multiplication from the left; this action makes $\P G$ into an $LG$-equivariant principal $G$-bundle over $\A_{S^1}$, with quotient map $g\mapsto A=g^{-1}\cdot 0$. 
The holonomy 
$\Hol\colon \A_{S^1}\to G$ of a connection may be defined in terms of the commutative diagram
\begin{equation}
\begin{aligned}
\label{eq:pg1}
\xymatrix{
\P G \ar[r]^{g\mapsto g^{-1}\cdot 0}\ar[d]_q& \A_{S^1}\ar[d]^-{\Hol}\\
G\times G \ar[r]_{}& G
}
\end{aligned}
\end{equation}
where the left vertical map is given by $q\colon g\mapsto (g(0),g(1))$ and the lower horizontal map is 
$(a_0,a_1)\mapsto a_0^{-1}a_1$. The holonomy map has the equivariance property $\Hol(k.A)=\Ad_{k(0)}\Hol(A)$ for $k\in LG$ and 
$A\in \A_{S^1}$. The generating vector fields for the action of $L\g=\Omega^0_{\HH_{r+1}}(S^1,\g)$ are again given by the covariant derivatives, 
\begin{equation}\label{eq:cova}
 \xi_{\A_{S^{1}}}|_A=\partial_A \xi;\end{equation} 
the differential of $\Hol$ maps these to the 
generators for the conjugation action. We denote by
\[ \pi\colon \P G\to G,\ \ g\mapsto g(0)^{-1}g(1)\]
 the map defined by the commutative diagram; it is the quotient map for the $G\times L_0G$-action 
 (not to be confused with the quotient map for the $LG$-action). 

\begin{lemma}
The tangent fiber to $\P G$ at $g$ has the following description
\begin{equation}\label{eq:tgdesc}
 T_g\ca{P}G\cong \Big\{\xi\in \Omega^0_{\HH_{r+1}}(\R,\g)|\  
 \Ad_{g(t)}(\xi(t+1)-\xi(t))=\on{const}\Big\}.
 \end{equation}
The action $T_g\ca{P}G\to T_{gh^{-1}}\ca{P} G$ of elements $h\in LG$ is given by 
$\xi\mapsto \Ad_h \xi$, while the action $ T_g\ca{P}G\to T_{ag}\ca{P} G$ of elements $a\in G$ 
is $\xi\mapsto \xi$.  In term of this identification \eqref{eq:tgdesc}, and using left trivialization  $TG=G\times \g$, 
the tangent map to the left vertical map in \eqref{eq:pg1} is given by 
\[ T_g q\colon T_g\P G\to   \g\oplus \g,\ \ \xi\mapsto q(\xi):=(\xi(0),\xi(1)).\]
\end{lemma}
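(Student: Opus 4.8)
The plan is to do everything in the left trivialization $TG\cong G\times\g$, so that a tangent vector to the ambient path space at $g$ is recorded by the curve's velocity $\xi(t)=\theta^L\big(\tfrac{\p}{\p s}\big|_{0}g_s(t)\big)=g(t)^{-1}\dot g(t)\in\Omega^0_{\HH_{r+1}}(\R,\g)$. With this convention all three assertions reduce to the chain rule together with a single linearization of the defining constraint.

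First I would establish the description of $T_g\P G$. Writing $c_s(t)=g_s(t+1)g_s(t)^{-1}$, membership of $g_s$ in $\P G$ is precisely the condition that $c_s(\cdot)$ be independent of $t$. Differentiating $c_s(t)=c_s(0)$ in $s$ shows that $\tfrac{\p}{\p s}\big|_0 c_s(t)$ is independent of $t$, and a short computation using $\dot g(t)=(T_eL_{g(t)})\xi(t)$ together with $g(t+1)=c_0\,g(t)$ (where $c_0=g(1)g(0)^{-1}$) gives
\[ \tfrac{\p}{\p s}\big|_{0}c_s(t)=(T_eL_{c_0})\,\Ad_{g(t)}\big(\xi(t+1)-\xi(t)\big). \]
Since $T_eL_{c_0}$ is a fixed isomorphism, this is constant in $t$ if and only if $\Ad_{g(t)}(\xi(t+1)-\xi(t))$ is constant in $t$, which gives the inclusion of $T_g\P G$ into the space $W$ on the right-hand side of \eqref{eq:tgdesc}.

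The reverse inclusion is the step I expect to be the main obstacle, since it is the only point where more than a formal differentiation is needed. The clean route is to present $\P G$ as the level set $F^{-1}(e)$ of the map $F(g)(t)=g(t+1)g(t)^{-1}\,(g(1)g(0)^{-1})^{-1}$; linearizing at $g\in\P G$ reproduces exactly the expression above and identifies $\ker(DF_g)$ with $W$, so it remains to check that $DF_g$ is a submersion so that $T_g\P G=\ker(DF_g)=W$. This transversality, and the Hilbert-manifold structure of $\P G$ underlying it, is what the caloron correspondence of Appendix~\ref{app:cal} supplies; alternatively one can integrate a given $\xi\in W$ directly, starting from $g_s(t)=g(t)\exp(s\xi(t))$, which satisfies the constraint to first order and can be corrected at higher order using the submanifold property. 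The analytic point to keep track of is the Sobolev bookkeeping, namely that $\xi\in\HH_{r+1}$ and that $\Ad_g$ and $\exp$ preserve this class, so the construction stays within $\P G$.

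The remaining assertions are immediate in the left trivialization. For $h\in LG$ acting by $(h\cdot g)(t)=g(t)h(t)^{-1}$, the velocity of $t\mapsto g_s(t)h(t)^{-1}$ is $(gh^{-1})^{-1}(\dot g\,h^{-1})=h\,g^{-1}\dot g\,h^{-1}=\Ad_h\xi$, which is the stated map $T_g\P G\to T_{gh^{-1}}\P G$, $\xi\mapsto\Ad_h\xi$; for $a\in G$ acting by pointwise left multiplication the velocity of $t\mapsto a\,g_s(t)$ is $(ag)^{-1}(a\dot g)=g^{-1}\dot g=\xi$, so the action is trivial on velocities. Finally, since $q(g)=(g(0),g(1))$, its differential sends the velocity $\xi$ to the pair of left-trivialized evaluations $\big(\theta^L(\dot g(0)),\theta^L(\dot g(1))\big)=(\xi(0),\xi(1))$, which is the last claim.
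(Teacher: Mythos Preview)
Your proposal is correct and the computations are all right. The route differs from the paper's in one organizational point worth noting. The paper does not linearize the constraint directly; instead it invokes the functorial identity
\[
T(\P G)=\P(TG),
\]
i.e.\ the tangent bundle of the path fibration for $G$ is the path fibration for the tangent group $TG=G\times\g$ (with the semidirect product multiplication $(a_1,X_1)(a_2,X_2)=(a_1a_2,\Ad_{a_2^{-1}}X_1+X_2)$). The defining condition for $\P(TG)$, namely that $(g(t+1),\xi(t+1))(g(t),\xi(t))^{-1}$ be constant, unpacks immediately to the constraint $\Ad_{g(t)}(\xi(t+1)-\xi(t))=\mathrm{const}$, and this gives both inclusions at once.

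What this buys: the ``reverse inclusion'' you flag as the main obstacle is absorbed into the single statement $T(\P G)=\P(TG)$, which holds because $\P G$ is already known to be a Hilbert manifold (via the caloron correspondence, as you also note). So there is no need for a separate submersion argument or an explicit integration of $\xi\in W$ to a curve. Your approach is a bit more hands-on and makes the analytic content visible; the paper's is slicker but relies on the reader accepting that the tangent functor commutes with the path-fibration construction. Both lead to the same computation of the $LG$- and $G$-actions and of $T_gq$, where your arguments match the paper's exactly.
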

\begin{proof}
The tangent bundle of $\P G$ can itself be regarded as the total space of the path fibration for the tangent group $TG$: 
\[ T(\P G)=\P(T G).\]
Using left trivialization to identify $TG=G\times \g$, the group structure reads as 
$(a_1,X_1)(a_2,X_2)=(a_1a_2,\Ad_{a_1^{-1}}X_1+X_2)$, and $(a,X)^{-1}=(a^{-1},-\Ad_a X)$. 
Hence, the condition for a path $t\mapsto (g(t),\xi(t))$ to define an element of $\P( TG)$ is that
\[ \big(g(t+1),\xi(t+1)\big)\big(g(t),\xi(t)\big)^{-1}
=\big(g(t+1)g(t)^{-1},\ \Ad_{g(t)}(\xi(t+1)-\xi(t)\big)\]
be constant as a function of $t$. The last claim follows since the tangent map to $q\colon \P G\to G\times G$ is the
corresponding map for $\P(TG)\to TG\times TG$ for the group $TG$. 
\end{proof}
Regard $\P G$ as an $LG$-equivariant principal $G$-bundle over $\A_{S^1}$, and let 
\[ R=T(\P G)/G\to \A_{S^1}\]
be the corresponding $LG$-equivariant Lie algebroid. 
\begin{proposition}
The fibers of the Lie algebroid $R$ have the following description, 
\begin{equation}\label{eq:otherat}
R_A= \big\{\xi\in \Omega^0_{\HH_{r+1}}(\R,\g)|\ \partial_A\xi \mbox{ is periodic }\big\},\end{equation}
with anchor map $\xi\mapsto \xi_{\A_{S^1}}(A)=\partial_A\xi$. 
The Lie bracket on sections of $R$ is given by 
\begin{equation}
[\xi_1,\xi_2]_R=[\xi_1,\xi_2]+\L({\xi_{1,\A}})\xi_2-\L(\xi_{2,\A}) \xi_1; 
\end{equation}
here $\L(a)\xi$ denotes the Lie derivative of the function $\xi$ with respect to the vector field $a$, and 
$ [\xi_1,\xi_2]$ is the pointwise Lie bracket. 
\end{proposition}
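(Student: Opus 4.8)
The plan is to treat $R=T(\P G)/G$ as the Atiyah algebroid of the principal $G$-bundle $\P G\to\A_{S^1}$ for the action of $G$ by pointwise left multiplication. For such an algebroid the sections are canonically the $G$-invariant vector fields on $\P G$, the bracket is the commutator of vector fields, and the anchor is induced by the differential of the bundle projection $g\mapsto A=g^\star\theta^L$. First I would read off the fibre $R_A$ from the previous lemma: a tangent vector at $g$ is left-trivialized to a function $\xi$ subject to $\Ad_{g(t)}(\xi(t+1)-\xi(t))$ being constant, and since $a\in G$ acts by $g\mapsto ag$ while fixing $\xi$, this set is independent of the chosen lift $g$ of $A$ and equals $R_A$.

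To rewrite the fibre condition as periodicity of $\partial_A\xi$, the one genuinely new computation is the identity
\[
\tfrac{d}{dt}\big(\Ad_{g(t)}\xi(t)\big)=\Ad_{g(t)}\,(\partial_A\xi)(t),
\]
which follows from $\partial_t\Ad_{g(t)}=\Ad_{g(t)}\circ\ad(A)$ together with $A=g^\star\theta^L$. Combining this with the defining relation $g(t+1)=c\,g(t)$, where $c=g(1)g(0)^{-1}$ is constant, and differentiating the fibre expression $\Ad_{g(t)}(\xi(t+1)-\xi(t))$ gives
\[
\tfrac{d}{dt}\Ad_{g(t)}\big(\xi(t+1)-\xi(t)\big)=\Ad_{g(t)}\big((\partial_A\xi)(t+1)-(\partial_A\xi)(t)\big).
\]
Since $\Ad_{g(t)}$ is invertible, the left-hand side vanishes identically --- i.e.\ the fibre condition holds --- exactly when $\partial_A\xi$ is periodic, which establishes the description of $R_A$. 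The anchor is then obtained by differentiating $g\mapsto g^\star\theta^L$ along the variation $\delta g=(TL_g)\xi$: a short calculation yields $\delta A=\dot\xi+[A,\xi]=\partial_A\xi$, matching $\xi_{\A_{S^1}}(A)$.

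For the bracket I would represent a section by the $G$-invariant vector field $V_\xi(g)=(TL_g)\big(\xi(A(g))\big)$, where $A(g)=g^\star\theta^L$ and the assignment $A\mapsto\xi(A)$ satisfies the fibre condition; $G$-invariance forces $\xi$ to depend on $g$ only through $A$. I would then compute $[V_{\xi_1},V_{\xi_2}]=D_{V_{\xi_1}}V_{\xi_2}-D_{V_{\xi_2}}V_{\xi_1}$ directly. Differentiating $V_{\xi_2}$ along $V_{\xi_1}$ produces two contributions: a pointwise product term from differentiating the left translation $TL_g$, and a term $\L(\xi_{1,\A})\xi_2$ from the chain rule applied to the $A$-dependence, the inner variation of $A$ being $\partial_A\xi_1=\xi_{1,\A}$ by the anchor computation. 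Antisymmetrizing, the product terms combine into the pointwise commutator $[\xi_1,\xi_2]$ and the derivative terms into $\L(\xi_{1,\A})\xi_2-\L(\xi_{2,\A})\xi_1$, so that $[V_{\xi_1},V_{\xi_2}]=V_\zeta$ with $\zeta$ the claimed expression.

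All the computations are elementary once this framework is in place. The points requiring the most care are the bookkeeping of the left-trivialization and signs, the verification that the fibre identification is independent of the lift $g$ (which uses the triviality of the $G$-action on the $\xi$-variables recorded in the previous lemma), and the observation that the resulting bracket automatically lands back in $\Gamma(R)$, since it is computed as a genuine commutator of vector fields tangent to $\P G$.
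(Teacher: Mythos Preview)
Your proposal is correct and follows essentially the same route as the paper's (very terse) proof. The paper simply observes that the description \eqref{eq:tgdesc} of $T_g\P G$ depends only on $A=g^{-1}\cdot 0$, invokes the identity $\partial_A=\Ad_{g^{-1}}\circ\partial\circ\Ad_g$ (your formula $\tfrac{d}{dt}(\Ad_{g(t)}\xi(t))=\Ad_{g(t)}(\partial_A\xi)(t)$ is exactly this), and states that the bracket follows from the analogous computation for vector fields on $\P G$; your write-up is a faithful expansion of these hints into an Atiyah-algebroid computation.
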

\begin{proof}
The subspace on the right hand side of \eqref{eq:tgdesc} depends only on $A=g^{-1}\cdot 0$; equation 
\eqref{eq:otherat} gives a direct description in terms of $A$.  (Recall that $\partial_A=\Ad_{g^{-1}}\circ \ \partial\circ \Ad_g$.) The expression for the Lie bracket follows from a similar formula for the bracket 
on sections of $T(\P G)$.  
\end{proof}

\subsection{Reduction by the $L_0G$-action}\label{redbaslop}
The lift of the $LG$-action on $\A_{S^1}$ to the standard Courant algebroid $\T \A_{S^1}$ has isotropic generators $\varrho\colon L\g\to \Gamma(\T \A_{S^1})$ given by the same formulas 
as for $\A_\II$:
\begin{equation}\label{eq:varrhoxi}
 \varrho(\xi)=\xi_{\A_{S^1}}+\l\d A,\,\xi\r,\ \ \xi\in L\g.
 \end{equation}
By \eqref{eq:cova},  the fiber of $\E=\varrho(\A_{S^1}\times L\g)$ at $A\in \A_{S^1}$ may be regarded as the graph of the skew-adjoint operator $\partial_A\colon \Omega^0(S^1,\g)\to \Omega^1(S^1,\g)$. In particular, $\E$ is a Lagrangian subbundle, and since it is involutive it is a Dirac structure 
$\E\subset \T \A_{S^1}$. Indeed, $\E$ is a weak Poisson structure, which we will again refer to as a Lie-Poisson structure on $\A_{S^1}$. 

To describe its reduction with respect to the based loop group $L_0G$, we extend \eqref{eq:varrhoxi} to sections  of the Lie algebroid $R$: 
\begin{equation}\label{eq:varrhoxi1}
 \varrho(\xi)=\xi_{\A_{S^1}}+\l\d A,\,\xi|_\II \r,\ \ \xi\in \Gamma(R); 
 \end{equation}
here we denote by $\xi|_\II\in \Omega^0_{\HH_{-r}}(S^1,\g)$ the restriction 
to $\II\subset \R$, regarded as a piecewise continuous function on $S^1$ (with a jump singularity at $0$) and by  $\l\d A,\,\xi|_\II\r$ the corresponding element of $T_A^\ast \A_{S^1}$. 
\begin{lemma}
For $\xi_1,\xi_2\in \Gamma(R)$, the pairing of  the corresponding sections is given by 
\[ \l \varrho(\xi_1),\varrho(\xi_2)\r=\xi_1(1)\cdot \xi_2(1)-\xi_1(0)\cdot \xi_2(0),\]
while the Courant bracket is
\[ \Cour{\varrho(\xi_1),\varrho(\xi_2)}=\varrho([\xi_1,\xi_2]_R)+
\xi_2(1)\cdot \d \xi_1(1)-\xi_2(0)\cdot \d \xi_1(0)
.\]
\end{lemma}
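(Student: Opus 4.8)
The plan is to verify both formulas by direct computation in the standard Courant algebroid $\T\A_{S^1}$ (so the twist $\eta$ is zero), in the spirit of Proposition \ref{prop:gene}, the one new feature being that the sections $\xi_1,\xi_2\in\Gamma(R)$ depend on the basepoint $A$ and are not periodic. Throughout I would write $\xi_\A:=\xi_{\A_{S^1}}$ for the anchor vector field, so that $\xi_\A|_A=\partial_A\xi$, and $\mu_\xi:=\l\d A,\xi|_\II\r$ for the $1$-form part, characterized by $\iota(a)\mu_\xi=\int_0^1 a\cdot\xi$ for $a\in T_A\A_{S^1}$; thus $\varrho(\xi)=\xi_\A+\mu_\xi$.

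For the pairing I would use the metric $\l v_1+\mu_1,v_2+\mu_2\r=\iota(v_1)\mu_2+\iota(v_2)\mu_1$ to obtain $\l\varrho(\xi_1),\varrho(\xi_2)\r=\int_0^1(\partial_A\xi_1)\cdot\xi_2+\int_0^1(\partial_A\xi_2)\cdot\xi_1$. Writing $\partial_A=\partial+\ad(A)$ and invoking $\Ad$-invariance of the metric, the two $\ad(A)$-contributions cancel, leaving $\int_0^1\partial(\xi_1\cdot\xi_2)=\xi_1(1)\cdot\xi_2(1)-\xi_1(0)\cdot\xi_2(0)$. This is exactly the computation behind \eqref{eq:innpr}; the only difference is that $\xi_1\cdot\xi_2$ need no longer be periodic, so the endpoint contributions do not cancel.

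For the Courant bracket I would apply $\Cour{v_1+\mu_1,v_2+\mu_2}=[v_1,v_2]+\L_{v_1}\mu_2-\iota_{v_2}\d\mu_1$. The tangent part is $[\xi_{1,\A},\xi_{2,\A}]=([\xi_1,\xi_2]_R)_\A$, since the anchor of $R$ is a Lie algebroid morphism; this matches the tangent part of $\varrho([\xi_1,\xi_2]_R)$. It then remains to show that the cotangent part $\L_{\xi_{1,\A}}\mu_{\xi_2}-\iota_{\xi_{2,\A}}\d\mu_{\xi_1}$ equals $\mu_{[\xi_1,\xi_2]_R}+\xi_2(1)\cdot\d\xi_1(1)-\xi_2(0)\cdot\d\xi_1(0)$. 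Using the flat affine structure of $\A_{S^1}$, I would first record $\d\mu_\xi(v,w)=\int_0^1 w\cdot\L(v)\xi-\int_0^1 v\cdot\L(w)\xi$, where $\L(v)\xi$ denotes the directional derivative of the $A$-dependent section $\xi$. Expanding $\L_{\xi_{1,\A}}\mu_{\xi_2}=\d(\iota_{\xi_{1,\A}}\mu_{\xi_2})+\iota_{\xi_{1,\A}}\d\mu_{\xi_2}$ and pairing with a test field $w$, the terms involving $\L(w)\xi_2$ cancel, and the essential step is the integration by parts $\int_0^1(\partial_A(\L(w)\xi_1))\cdot\xi_2=\big[(\L(w)\xi_1)\cdot\xi_2\big]_0^1-\int_0^1(\L(w)\xi_1)\cdot\partial_A\xi_2$. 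Since evaluation at the endpoints commutes with $\L(w)$, the boundary term is precisely $\xi_2(1)\cdot\iota(w)\d\xi_1(1)-\xi_2(0)\cdot\iota(w)\d\xi_1(0)$. Adding $-\iota_{\xi_{2,\A}}\d\mu_{\xi_1}$, the remaining $\L(w)\xi_1$ integrals cancel, and after rewriting $[w,\xi_1]\cdot\xi_2=w\cdot[\xi_1,\xi_2]$ by invariance the leftover integrand assembles into $[\xi_1,\xi_2]+\L(\xi_{1,\A})\xi_2-\L(\xi_{2,\A})\xi_1=[\xi_1,\xi_2]_R$, giving $\iota(w)\mu_{[\xi_1,\xi_2]_R}$. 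As $w$ was arbitrary, this yields the claim.

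The main obstacle is bookkeeping rather than conceptual: because the fibers $R_A$ and the sections $\xi_j$ vary with $A$, every differentiation produces auxiliary terms $\L(w)\xi_j$ and $\L(\xi_{i,\A})\xi_j$, and one must check that precisely the right ones cancel so that the surviving integrand is the Lie algebroid bracket $[\xi_1,\xi_2]_R$ defined in the preceding proposition. The boundary contributions $\xi_2(1)\cdot\d\xi_1(1)-\xi_2(0)\cdot\d\xi_1(0)$ --- the only genuinely new feature relative to the periodic loop-group case --- come entirely from the single integration by parts above and reflect the non-periodicity of sections of $R$.
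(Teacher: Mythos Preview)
Your proof is correct and follows essentially the same route as the paper's. Both arguments split the Courant bracket into its vector-field and $1$-form components, identify the former via the anchor of $R$, and obtain the boundary terms from a single integration by parts of $\partial_A$ against $\xi_2$. The only difference is organizational: you pair everything with a test vector $w$ and work with explicit integrals $\int_0^1(\cdots)$, whereas the paper keeps the tautological $1$-form $\d A$ in the notation and uses Leibniz-type identities such as $\L_{\xi_1^\sharp}\l\d A,\xi_2\r=\l\d A,\L_{\xi_1^\sharp}\xi_2\r+\l\d\L_{\xi_1^\sharp}A,\xi_2\r$ and $\l\d\partial_A\xi_1,\xi_2\r=\l\partial_A\d\xi_1,\xi_2\r+\l[\d A,\xi_1],\xi_2\r$ before combining $\l\partial_A\d\xi_1,\xi_2\r+\l\partial_A\xi_2,\d\xi_1\r=\int_\II\partial(\xi_2\cdot\d\xi_1)$; this last step is exactly your integration by parts, written in $1$-form notation rather than evaluated on $w$.
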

\begin{proof}
We will write $\xi^\sharp=\xi_{\A}$ for the vector field defined by $\xi\in \Gamma(R)$. 
The formula for the pairing follows from 
\[ \iota_{\xi_1^\sharp}\ \l\d A,\,\xi_2\r
+\iota_{\xi_2^\sharp}\ \l\d A,\,\xi_1\r
=\l \partial_A\xi_1,\xi_2\r+\l \xi_1,\partial_A \xi_2\r
=\int_\II \partial(\xi_1\cdot\xi_2),
\]
The vector field component of the Courant bracket $\Cour{\varrho(\xi_1),\varrho(\xi_2)}$ 
is $[\xi_1^\sharp,\xi_2^\sharp]=[\xi_1,\xi_2]_R^\sharp$. 
For the 1-form component, we have to calculate 
\[ \begin{split}
\L_{\xi_1^\sharp}(\l\d A,\,\xi_2\r)-\iota_{\xi_2^\sharp}\d (\l\d A,\,\xi_1\r)
&=\l\d A,\,(\L_{\xi_1^\sharp}\xi_2-\L_{\xi_2^\sharp}\xi_1)\r
+\l \d \L_{\xi_1^\sharp} A,\, \xi_2\r+\l\iota_{\xi_2^\sharp}\d A,\, \d\xi_1\r\\
&=\l\d A,\, (\L_{\xi_1^\sharp}\xi_2-\L_{\xi_2^\sharp}\xi_1)\r
+\l \d \partial_A \xi_1,\, \xi_2\r+\l\partial_A\xi_2,\, \d\xi_1\r\\
\end{split}\]
But $\l\d \partial_A \xi_1,\, \xi_2\r=\l \partial_A \d \xi_1,\, \xi_2\r+\l[\d A,\xi_1],\,\xi_2\r$. 
The first term combines with $\l\partial_A\xi_2,\,\d\xi_1\r$ to give 
$\int_\II \partial(\xi_2\cdot \d \xi_1)$, 
while the second term combines with $\l\d A,\, (\L_{\xi_1^\sharp}\xi_2-\L_{\xi_2^\sharp}\xi_1)\r$ to 
$\l\d A,\, [\xi_1,\xi_2]_R\r$.  
\end{proof}
We are now in position to compute the reduction of the Lie-Poisson structure $\E\subset \T \A_{S^1}$ by the action of $L_0G$. By definition, the reduced Courant algebroid is $(C/C^\perp)/L_0G$, where 
$C$ is the coisotropic subbundle with fibers
$C_A=(\varrho(L_0\g)_A)^\perp$. 

\begin{theorem}[Reduction of the weak Poisson structure on $\A_{S^1}$] 
The reduction of the Dirac structure $(\T \A_{S^1},\E)$ under the action of  the based loop group 
$L_0G$ is canonically isomorphic to the Cartan-Dirac structure $(\AA,E)$. In more detail, 
$C$ is spanned by sections $\varrho(\xi)$ with $\xi\in \Gamma(R)$, and the map 
\[ C\to G\times (\ol{\g}\oplus \g),\ \ \varrho(\xi)\mapsto (\Hol(A), \xi(0),\xi(1))\]
descends to an isomorphism  of Courant algebroids $(\T \A_{S^1})_\red\to \AA$.  
\end{theorem}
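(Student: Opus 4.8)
The plan is to follow the strategy of Theorem \ref{thm:redu} for the interval, with the based loop group $L_0G$ playing the role of $G_{\II,\partial\II}$ and the transitive Lie algebroid $R$ playing the role of the $\g_\II$-action. The two computational inputs are the pairing and bracket formulas for the sections $\varrho(\xi)$, $\xi\in\Gamma(R)$, recorded in the preceding Lemma. First I would note that the generators of the based loop group are isotropic: a based loop $\eta\in L_0\g$ satisfies $\eta(0)=\eta(1)=0$, so the pairing formula gives $\l\varrho(\eta_1),\varrho(\eta_2)\r=0$. Since the composition $\a\circ\varrho=\partial_A$ is injective on $L_0\g$ (by uniqueness for the defining ODE) with closed image, the argument of Lemma \ref{lem:closlem} shows $\varrho(\A_{S^1}\times L_0\g)$ is a closed isotropic subbundle. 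Hence $C=\varrho(\A_{S^1}\times L_0\g)^\perp$ is coisotropic with $C^\perp=\varrho(\A_{S^1}\times L_0\g)$.

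The first main step is the identification $C=\varrho(\Gamma(R))$. The inclusion $\varrho(R_A)\subseteq C_A$ is immediate from the pairing formula, since $\eta(0)=\eta(1)=0$ for $\eta\in L_0\g$. For the reverse inclusion I would argue fibrewise, exactly as in the proof of \eqref{eq:perp1}: given $b+\beta\in C_A$, the vector part $b$ is a periodic $\g$-valued $1$-form, so $\partial_A\zeta=b$ admits a solution $\zeta$ on $\R$, which lies in $R_A$ automatically because $\partial_A\zeta=b$ is periodic; the solution is unique up to a covariantly constant section $\zeta'(t)=\Ad_{g(t)^{-1}}X$, $X\in\g$, where $g$ is a parallel transport for $A$. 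Writing $\beta'=\l\d A,\zeta|_\II\r$, the difference $\mu=\beta-\beta'$ is a pure $1$-form in $C_A$, hence annihilates every vertical vector $\partial_A\eta$, $\eta\in L_0\g$. Such horizontal covectors form a $\dim\g$-dimensional space, and $X\mapsto\l\d A,\zeta'|_\II\r$ is an injective (hence bijective) map from $\g$ onto it. Choosing $X$ so that $\l\d A,\zeta'|_\II\r=\mu$ exhibits $b+\beta=\varrho(\zeta+\zeta')$ with $\zeta+\zeta'\in R_A$.

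With $C$ described, the map $\varrho(\xi)\mapsto(\Hol(A),\xi(0),\xi(1))$ has kernel exactly $\varrho(L_0\g)=C^\perp$: an $\xi\in R_A$ with $\xi(0)=\xi(1)=0$ is forced to be periodic by the description \eqref{eq:tgdesc} (the constant $\Ad_{g(t)}(\xi(t+1)-\xi(t))$ vanishes at $t=0$), so it lies in $L_0\g$. Thus the map descends to a fibrewise isomorphism $C/C^\perp\to G\times(\ol\g\oplus\g)$; being $L_0G$-invariant it defines a vector bundle isomorphism $(\T\A_{S^1})_\red\to\AA$ over $G=\A_{S^1}/L_0G$. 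It preserves the metric by the pairing formula, matching $\xi_1(1)\cdot\xi_2(1)-\xi_1(0)\cdot\xi_2(0)$ with the metric $-X_0\cdot Y_0+X_1\cdot Y_1$ on $\ol\g\oplus\g$; and it intertwines the anchors via $T\Hol(\partial_A\xi)=\xi(1)^L-\xi(0)^R$ (the analogue of \eqref{eq:holdiffact}, consistent with $\Hol(k.A)=\Ad_{k(0)}\Hol(A)$ since for a loop $\xi(0)=\xi(1)$), which is exactly the $\dd$-action generator attached to $(\xi(0),\xi(1))$.

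The main obstacle is matching the reduced Courant bracket with the action Courant bracket on $\AA=G\times\dd$, because---unlike the interval case, where $\varrho$ was an honest Lie algebra morphism on $\g_\II$---the bracket formula of the Lemma carries the correction term $\xi_2(1)\cdot\d\xi_1(1)-\xi_2(0)\cdot\d\xi_1(0)$. The resolution is that the action Courant algebroid structure is determined by the bracket on \emph{constant} sections together with the Leibniz rule and the anchor. I would therefore represent a constant section $(X_0,X_1)\in\dd$ of $\AA$ by an $\xi\in\Gamma(R)$ with $A$-independent endpoints $\xi(0)=X_0$, $\xi(1)=X_1$; for such sections $\d\xi_i(0)=\d\xi_i(1)=0$, so the correction term vanishes and the bracket becomes $\varrho([\xi_1,\xi_2]_R)$. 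The Lie-derivative contributions to $[\xi_1,\xi_2]_R$ also vanish at the endpoints when the endpoints are constant, so $[\xi_1,\xi_2]_R$ has endpoints $([X_0^{(1)},X_0^{(2)}],[X_1^{(1)},X_1^{(2)}])$, reproducing the Lie bracket of $\dd=\ol\g\oplus\g$. Together with the anchor this identifies $(\T\A_{S^1})_\red$ with the Cartan-Courant algebroid $\AA$. Finally, the Lie--Poisson structure $\E=\varrho(\A_{S^1}\times L\g)$ is generated by genuine loops $\xi\in L\g$, which satisfy $\xi(0)=\xi(1)$; hence $\E\subseteq C$ and its reduction is $G\times\g_\Delta=E$, the Cartan--Dirac structure.
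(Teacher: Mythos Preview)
Your proof is correct and follows essentially the same strategy as the paper, with two minor technical variations worth noting.

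For the identification $C_A=\varrho(R_A)$, the paper proceeds by writing a general element of $\T_A\A_{S^1}$ as $a+\l\d A,u\r$ with $u\in\Omega^0_{\HH_{-r}}(S^1,\g)$, and shows directly that the orthogonality condition forces $a-\partial_A u$ to be a multiple of the $\delta$-distribution at $0$; integrating then exhibits $u$ as $\xi|_\II$ for some $\xi\in R_A$. Your route---first solving $\partial_A\zeta=b$ on $\R$ and then absorbing the residual horizontal covector into a covariantly constant $\zeta'$---reaches the same conclusion by a dimension count rather than a distributional argument. Both are fine; yours avoids invoking $\HH_{-r}$ explicitly.

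For the bracket, the paper simply says ``using the Lemma we obtain the metric and Courant bracket of the Cartan--Courant algebroid''; you are in fact being more explicit than the paper by isolating the role of the correction term $\xi_2(1)\cdot\d\xi_1(1)-\xi_2(0)\cdot\d\xi_1(0)$ and explaining why it vanishes on lifts with $A$-independent endpoints. One small point you leave implicit is the existence of such $L_0G$-invariant lifts of constant sections $(X_0,X_1)\in\dd$; this follows, for instance, by horizontally lifting the left-invariant vector field $(X_0^L,X_1^L)$ on $G\times G$ through any principal connection on the $L_0G$-bundle $q\colon \P G\to G\times G$ (this gives a $G\times L_0G$-invariant vector field on $\P G$, hence an $L_0G$-invariant section of $R$ with the required endpoint values).
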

\begin{proof}
An  element $a+\l\d A,u\r$ with $a\in T_A\A_{S^1}=\Omega^1_{\HH_r}(S^1,\g)$ and $u\in 
\Omega^0_{\HH_{-r}}(S^1,\g)$, lies in $C_A=\varrho(L_0\g)_A^\perp$ if and only if for all 
$\tau\in L_0\g$, 
\[ 0=\big\l a+\l\d A,u\r,\ \partial_A\tau+\l\d A,\tau\r
\big\r=\big\l a-\partial_A u,\ \tau
\big\r
\]
Equivalently, $a-\partial_A u$ is a multiple of the $\delta$-distribution supported at $0$. In particular, $u$ is given by a continuous function on $\II$ (regarded as a  piecewise continuous function 
on $S^1$ with a jump discontinuity at $0$). Given $a\in T_A\A_{S^1}=\Omega^1_{\HH_r}(S^1,\g)$, 
we can determine the corresponding $u$ by integration. Furthermore, 
by lifting the differential equation to $\R$, we see that $u$ is the restriction to $\II$ 
of a function $\xi\in \Omega^0_{\HH_{r+1}}(\R,\g)$ satisfying $\partial_A\xi=a$ (where $A,a$
are regarded as periodic forms on  $\R$). In particular, $\partial_A\xi$ is periodic, that is, $\xi\in R_A$. This gives the desired identification of $R_A\to C_A,\ \xi\mapsto \varrho(\xi)_A$. 

Since the kernel of the map $R_A\to \ol{\g}\oplus \g,\ \ \xi\mapsto (\Hol(A),\xi(0),\xi(1))$ is exactly $L_0\g$, it follows that $(\T\A_{S^1})_{\red}=G\times (\ol{\g}\oplus\g)$ as a vector bundle. 
The metric and Courant bracket on $(\T\A_{S^1})_{\red}$ are induced from the metric and Courant bracket on $L_0G$-invariant sections of $C$; using the Lemma we obtain the metric and Courant bracket of the Cartan-Courant algebroid. Finally, since the $L_0G$-invariant sections $\varrho(\xi)$ 
of $\E\subset R$ are those with $\xi(0)=\xi(1)$, we see that $\E_\red$ is the Cartan-Dirac structure. 
\end{proof}

Similar to $\A_\II$, the fibration $\A_{S^1}\to G$ has a standard connection, defined by any choice of a function $\chi\in C^\infty(\II)$  such that $\chi$ extends to a smooth function 
on $\R$, equal to $0$ for $t\le 0$ and equal to $1$ for $t\ge 1$. The connection is best described in terms of the caloron correspondence, Appendix \ref{app:cal}. Arguing as in the case of $\A_\II$, we obtain:   
\begin{theorem}
The reduction of the Dirac structure $(\T \A_{S^1},\E)$ with respect to the based loop group $L_0G$ is $G=LG/L_0G$-equivariantly isomorphic to the Cartan-Dirac structure $(\AA,E)$ over $G$. Furthermore, the reduction of the $L_0G$-basic splitting of $\T \A_{S^1}$, defined by the standard connection $\theta$ on the holonomy fibration,  is the usual splitting of  the Cartan-Courant algebroid, identifying $\AA\cong \T G_\eta$. The reduction procedure gives a one-to-one correspondence between $LG$-equivariant (exact) Hamiltonian spaces for $(\T \A_{S^1},\E)$ and $G$-equivariant (exact) Hamiltonian spaces for $(\AA,E)$. 
\end{theorem}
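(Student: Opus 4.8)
The plan is to follow the interval case almost verbatim, since the preceding theorem already supplies the underlying Courant-algebroid isomorphism $(\T\A_{S^1})_\red\cong(\AA,E)$. What remains is to upgrade it to a $G$-equivariant statement, to reduce the splitting, and to transfer Hamiltonian spaces, each step paralleling Theorem~\ref{thm:redu} and Proposition~\ref{prop:LG}.

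First I would establish the equivariance. The lifted $LG$-action on $\T\A_{S^1}$ is generated $LG$-equivariantly, because $\xi\mapsto\l\d A,\xi\r$ is $LG$-equivariant and takes values in closed $1$-forms, exactly as in Proposition~\ref{prop:gene}. Since $L_0G$ is normal in $LG$, and the pairing $\l\varrho(\xi_1),\varrho(\xi_2)\r=\xi_1(1)\cdot\xi_2(1)-\xi_1(0)\cdot\xi_2(0)$ vanishes whenever $\xi_1\in L_0\g$ (a based loop has $\xi_1(0)=\xi_1(1)=0$), the hypotheses of item~\eqref{rmk:normal} of Remark~\ref{rmks:redugral} hold with $U=LG$. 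Hence $(\T\A_{S^1})_\red$ carries an action of $LG/L_0G\cong G$ with induced generators $\varrho_\red$ on $L\g/L_0\g\cong\g$. Under the isomorphism of the previous theorem, and using that a loop satisfies $\xi(0)=\xi(1)$, these $\varrho_\red$ correspond to the diagonal generators $\g_\Delta\subseteq\dd$ of $\AA$, and the base action is conjugation by the equivariance $\Hol(k.A)=\Ad_{k(0)}\Hol(A)$; thus the isomorphism is $G$-equivariant.

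Next I would reduce the splitting by repeating the computation of Theorem~\ref{thm:redu} in the circle setting. Starting from the standard connection $\theta$ on $\A_{S^1}\to G$ attached to a cut-off $\chi$ (described via the caloron correspondence, Appendix~\ref{app:cal}), one sets $\alpha(\xi)=\l\d A,\xi\r$ and $c(\xi,\xi')=\l\partial_A\xi,\xi'\r$, forms $\varpi=-\l\d A,\theta\r+\hh\l\partial_A\theta,\theta\r$ as in \eqref{eq:varpi}, and defines $j(a)=a+\iota(a)\varpi$. Taking the horizontal lifts $\xi,\zeta\in\Gamma(R)$ of $X,Z\in\g$ and evaluating $\iota(\zeta_\A)\Hol^*\beta(X)=\l\partial_A\zeta,\xi\r$ collapses, via the same integral $Z\cdot X\int_0^1(\f{\p\chi}{\p t})\,\chi\,\d t=\hh\,Z\cdot X$, to $\beta(X)=\hh\,X\cdot\theta^L$. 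This gives $j_\red(X^L)=\varrho_\red(0,X)-\hh\,\a_\red^*(\theta^L\cdot X)$, which is exactly the splitting \eqref{eq:courantsplitting} of $\AA$.

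Finally, the Hamiltonian correspondence is the analogue of Proposition~\ref{prop:LG}, obtained by applying Theorem~\ref{th:Rred} and Propositions~\ref{prop:Diracred} and~\ref{prop:Diracred1} with $L_0G$ as reduction group. Because $\A_{S^1}\to G$ is a principal $L_0G$-bundle, equivariance forces the induced $L_0G$-action on any Hamiltonian space $\M$ to be principal, so $M=\M/L_0G$ is a manifold with $G$-action and moment map $\Phi\colon M\to G$. The source generators of the standard lift on $\T\M$ lie in $T\M$, so the technical condition \eqref{eq:technical} holds automatically by case (b) of Remark~\ref{rem:Rred}; this produces the reduced Dirac morphism, while part (c) of Theorem~\ref{th:Rred} reconstructs $\ca{R}$ from $R$, giving a bijection, and Proposition~\ref{prop:Diracred1} shows it preserves exactness. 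The main obstacle I anticipate is the splitting reduction: although it mirrors the interval case, it requires handling the analytic features peculiar to the circle---the horizontal lifts only lie in $\Gamma(R)$ (so $\partial_A\xi$ is periodic rather than $\xi$ itself), and the standard connection is available only through the caloron correspondence---so care is needed to confirm that the integral still produces the factor $\hh$ and that $\varpi$ descends correctly.
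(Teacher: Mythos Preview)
Your proposal is correct and matches the paper's approach exactly: the paper simply states ``Arguing as in the case of $\A_\II$, we obtain'' this theorem, leaving the reader to transcribe Theorem~\ref{thm:redu} and Proposition~\ref{prop:LG} to the circle setting, which is precisely what you do. You have also correctly anticipated the one genuine subtlety---that horizontal lifts now live in $\Gamma(R)$ rather than in $\g_\II$---and noted that the integral computation is unaffected.
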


\begin{appendix}
\section{Reduction in infinite dimensions}\label{sec:appinfty}
  Let $V$ be a Banach space. The closure of a subspace $F\subseteq V$
  will be denoted $\on{cl}(F)$, and the annihilator
  $\on{ann}(F)\subseteq V^\star $, where $V^\star $ is the topological
  dual space of $V$. For Banach spaces $V,\,V'$, denote by
  $\mathbb{B}(V,V')$ the Banach space of continuous linear maps $V\to
  V'$. More generally, given Banach spaces $V_1,\ldots,V_l$ there is a
  Banach space $\mathbb{B}(V_1,\ldots,V_l;V')$ of continuous
  multilinear maps $V_1\times\cdots\times V_l\to V'$.

  Suppose $V$ is a Hilbert space with a pseudo-Riemannian metric
  $B$. Let $B^\flat \colon V\to V^\star $ be the associated map. For
  any subspace $F\subseteq V$, we have $B^\flat(F^\perp)=\on{ann}(F)$,
  and $(F^\perp)^\perp=\on{cl}(F)$.

  For the following Proposition, we observe that if $F_1,F_2$ are
  closed subspace of a real Hilbert space $V$, then $F_1+F_2$ is
  closed in $V$ if and only if $\on{ann}(F_1)+\on{ann}(F_2)$ is closed
  in $V^\star $.  (Proof: let $F_1',F_2'$ be closed complements to
  $F_1\cap F_2$ in $F_1,F_2$ respectively. If $F_1+F_2$ is closed, let
  $N$ be a closed complement to $F_1+F_2$ in $V$. Then $V=F_1\cap
  F_2\oplus F_1'\oplus F_2'\oplus N$ is a direct sum decomposition of
  $V$ into closed subspaces. By considering the dual decomposition of
  $V$, it follows that the inclusion $\on{ann}(F_1)+\on{ann}(F_2)\to
  \on{ann}(F_1\cap F_2)$ is an equality.)

  Thus, if $V$ carries a metric $B$, then $F_1+F_2$ is closed if and
  only if $F_1^\perp+F_2^\perp$ is closed. Criteria for $F_1+F_2$ to
  be closed may be found in \cite{schoch:clo}; in particular, it is
  known that the sum of disjoint closed subspaces is closed if and
  only if a suitably defined `angle' between these subspaces is
  non-zero.

\begin{proposition}\label{prop:linred}
  Let $V$ be a real Hilbert space with a metric $B$, and $C$ a closed
  co-isotropic subspace of $V$. Then
  \begin{enumerate}
  \item $C$ admits a closed isotropic complement. (In particular,
    every Lagrangian subspace admits a Lagrangian complement.)
  \item The quotient $V_C=C/C^\perp$ inherits a metric $B_C$,
  \item Suppose $L\subseteq V$ is Lagrangian. Then $L+C$ is closed if
    and only if $L+C^\perp$ is closed, and in this case $L_C=(L\cap
    C)/(L\cap C^\perp)$ is Lagrangian in $V_C$.
  \end{enumerate}
\end{proposition}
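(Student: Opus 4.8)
The plan is to handle the three parts in the order (b), (a), (c), since the non-degeneracy of the reduced metric is the cleanest fact and clarifies the bookkeeping used later. Throughout I would lean on the two facts recalled just above the statement: that $B^\flat(F^\perp)=\on{ann}(F)$ and $(F^\perp)^\perp=\on{cl}(F)$ for every subspace $F$, and that for closed $F_1,F_2$ the sum $F_1+F_2$ is closed if and only if $F_1^\perp+F_2^\perp$ is. Note that $C$ coisotropic means $C^\perp\subseteq C$, so $C^\perp$ is isotropic. For (b): the form $B_C$ is well defined because $C^\perp$ is exactly the radical of $B|_C$. Since $C$ is closed, $(C^\perp)^\perp=C$, so $B^\flat$ restricts to a continuous bijection $C\to\on{ann}(C^\perp)$ taking $C^\perp$ onto $\on{ann}(C)$; by the open mapping theorem this is a topological isomorphism, and passing to quotients gives $V_C=C/C^\perp\cong\on{ann}(C^\perp)/\on{ann}(C)$. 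Restriction of functionals identifies the right-hand side with $V_C^\star$ (surjectivity by Hahn--Banach, kernel exactly $\on{ann}(C)$), and unwinding the identifications shows the composite is $B_C^\flat$. Hence $B_C$ is non-degenerate.

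For (a) I would first use that $V$ is Hilbert to pick \emph{any} closed complement $U$ to $C$, so $V=C\oplus U$. The pairing $C^\perp\times U\to\R$, $(x,u)\mapsto B(x,u)$, is then perfect: non-degeneracy in each slot follows from $(C^\perp)^\perp=C$, $U\cap C=0$, and non-degeneracy of $B$, and the induced map $\Theta\colon C^\perp\to U^\star$ is a topological isomorphism by the open mapping theorem. I would then seek the complement in graph form $N=\{u+Su:u\in U\}$ with $S\in\mathbb{B}(U,C^\perp)$; any such $N$ is a closed complement to $C$. Because $C^\perp$ is isotropic, the last cross term drops out and the isotropy condition for $N$ becomes $B(u,u')+B(Su,u')+B(u,Su')=0$, i.e. the requirement that the symmetric part of $\Theta\circ S$ equal $-\tfrac12\,B|_{U\times U}$. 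Taking $S=-\tfrac12\,\Theta^{-1}T$, where $T=(B|_{U\times U})^\flat\colon U\to U^\star$, solves this; $S$ is bounded since $\Theta^{-1}$ is. This produces the closed isotropic complement. For the parenthetical, if $C=L$ is Lagrangian then any isotropic complement $N$ is automatically Lagrangian: writing $x\in N^\perp$ along $V=L\oplus N$ and using $N\subseteq N^\perp$ together with $L^\perp=L$ forces the $L$-component to vanish, so $N^\perp\subseteq N$.

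For (c): the equivalence ``$L+C$ closed $\iff L+C^\perp$ closed'' is immediate from the recalled closedness fact with $F_1=L,\ F_2=C$ and $L^\perp=L$. Assuming $L+C^\perp$ closed, isotropy of $L_C$ is free, since for $x,y\in L\cap C$ one has $B_C([x],[y])=B(x,y)=0$. For the reverse inclusion I would unwind definitions to get $L_C^\perp=\{[x]:x\in C\cap(L\cap C)^\perp\}$, compute $(L\cap C)^\perp=\on{cl}(L^\perp+C^\perp)=L+C^\perp$ (this is precisely where closedness enters), and then invoke the purely algebraic identity $C\cap(L+C^\perp)=(L\cap C)+C^\perp$: for $x=l+c'\in C$ with $l\in L$, $c'\in C^\perp\subseteq C$, one has $l=x-c'\in L\cap C$. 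This gives $L_C^\perp\subseteq L_C$, so $L_C=L_C^\perp$ is Lagrangian (and in particular closed, being an orthogonal complement).

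The hard part will be (a): with no dimension count available, the isotropic complement must be built explicitly, and the genuinely infinite-dimensional subtlety is the boundedness of the corrector $S$, which rests on the open mapping theorem applied to $\Theta$. The second place where finite-dimensional intuition fails is the Lagrangian claim in (c) — isotropy no longer forces maximality, so one must identify $L_C^\perp$ by hand, and it is exactly the hypothesis that $L+C^\perp$ be closed that lets one replace $\on{cl}(L+C^\perp)$ by $L+C^\perp$.
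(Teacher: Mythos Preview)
Your proof is correct. Part (c) is essentially identical to the paper's argument: both compute the preimage of $L_C^\perp$ in $C$ as $C\cap(L\cap C)^\perp=C\cap\on{cl}(L+C^\perp)$ and use the closedness hypothesis plus the modular-law identity $C\cap(L+C^\perp)=(L\cap C)+C^\perp$.

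For (a) your construction coincides with the paper's, though the packaging differs. The paper picks a complement $F$ to $C$, projects $F$ onto $C^\perp$ along $F^\perp$ to get a bounded map $A\colon F\to C^\perp$, and takes $F'=\{v-\tfrac12 A(v)\}$. Unwinding your $S=-\tfrac12\,\Theta^{-1}T$ gives exactly $S=-\tfrac12 A$: the paper's description avoids the explicit appeal to the open mapping theorem for $\Theta$, since boundedness of the projection is immediate from the direct-sum decomposition $V=C^\perp\oplus F^\perp$.

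Part (b) is where you genuinely diverge. You prove non-degeneracy of $B_C$ directly: $B^\flat$ restricts to a topological isomorphism $C\to\on{ann}(C^\perp)$ carrying $C^\perp$ onto $\on{ann}(C)$, and the quotient map is $B_C^\flat$. The paper instead uses the isotropic complement $F$ from (a) to split $V=C^\perp\oplus F\oplus(C\cap F^\perp)$ and identifies $V_C$ with the non-degenerate subspace $(C^\perp\oplus F)^\perp$. Your argument is self-contained and justifies doing (b) before (a); the paper's gives a concrete linear model for the reduced space. Both are clean.
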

\begin{proof}
  \begin{enumerate}
  \item Choose a closed complement $F$ to $C$. Then $F^\perp$ is a
    closed complement to $C^\perp$. The projection to $C^\perp$ along
    $F^\perp$ restricts to a continuous linear map $A\colon F\to
    C^\perp$, and
    \[ F'=\{v-\hh A(v)|\ v\in F\} \] is the desired isotropic
    complement to $C$. ($F'$ is closed since it is the graph of a
    continuous linear map $-\hh A\colon F\to C^\perp\subseteq C$.)
  \item The bilinear form $B$ descends to a continuous symmetric
    bilinear form $B_C\colon V_C\times V_C\to R$. We have to verify
    that $B_C$ is non-degenerate. Let $F$ be a closed isotropic
    subspace with $V=C\oplus F$, hence $V=C^\perp\oplus
    F^\perp$. Intersecting with $C$, it follows that $C=C^\perp\oplus
    (C\cap F^\perp)$, thus $V=C^\perp \oplus F\oplus (C\cap
    F^\perp)$. The quotient map $C\to V_C$ induces a topological
    isomorphism $C\cap F^\perp\to V_C$, identifying $B_C$ with the
    restriction of $B$ to $C\cap F^\perp=(C^\perp \oplus F)^\perp$.
    The latter is non-degenerate, hence so is $B_C$.
  \item The inverse image of $L_C^\perp$ in $C$ is
    \[ (L\cap C)^\perp\cap C =\on{cl}(L+C^\perp)\cap C\supseteq
    (L+C^\perp)\cap C=(L\cap C)+C^\perp.\]
    Applying the projection $C\to V_C$, it follows that
    $L_C^\perp\supseteq L_C$. If $L+C^\perp$ is closed, the inclusion
    becomes an equality, and we obtain $L_C^\perp=L_C$.
  \end{enumerate}
\end{proof}
%
%

\section{Lifting problems}\label{sec:brylinski}
  Let $Q\to B$ be a principal $G$-bundle, and
  \[ 
  1\to \U(1)\to \wh{G}\to G\to 1
  \]
  a central extension. Consider the exact sequence of vector bundles
  over $B$,
  \begin{equation}\label{eq:exseq} 
    0\to B\times\R\to Q \times_G{\wh{\g}}\to Q\times_G\g\to 0.
  \end{equation}
  A splitting of this sequence may be regarded as a $G$-equivariant
  map $\nu\colon \g\to \Omega^0(Q,\wh{\g})$ whose composition with the
  projection $\wh{\g}\to \g$ is the identity. The differential of this map is scalar-valued, defining 
  a linear map 
  \[\alpha\colon \g\to \Omega^1_{\on{cl}}(Q),\ \xi\mapsto \d \nu(\xi)\]
  with values in closed 1-forms. The map 
  \[ \varrho\colon \g\to \Gamma(\T Q),\ \xi\mapsto \xi_Q+\alpha(\xi)\] gives isotropic generators for the natural $G$-action on
  $\T Q$. The standard splitting of $\T Q$ is not basic for this $G$-action. 
  However, by Proposition \ref{prop:basplit} any principal connection $\theta$ on $Q$ defines a new
  $G$-basic splitting of $\T Q$, giving an identification $(\T
  Q)_\red=\T B_\eta$ for a closed 3-form $\eta\in\Omega^3(B)$. The
  construction also gives a 2-form $\varpi$ on $Q$ with
  $\d\varpi=-\pi^*\eta$. These are exactly 
   the 2-form and 3-form
  appearing in Brylinski's discussion of the problem of lifting the
  structure group to $\wh{G}$ \cite{br:lo}. In particular, the
  cohomology class of $\eta$ is the image in de Rham cohomology of the
  obstruction class in $H^3(B,\Z)$ for the existence of a lift.
\section{Caloron correspondence}\label{app:cal}
The \emph{caloron correspondence}, due to  Garland-Murray \cite{gar:kac}, Murray-Stevenson \cite{mur:hig}, and Murray-Vozzo \cite{mur:cal}, relates principal bundles over a base $B$, with structure group the (based) loop group, with (framed) principal bundles over a base $B\times S^1$, with structure group $G$. Among other things, this correspondence leads to a simple construction of principal connections on the loop group bundle. 
\subsection{Caloron correspondence for $\A_\II$}
In this section we will use  a version of the caloron correspondence where we work with path spaces rather than loop spaces. 
A \emph{framing} of a principal $G$-bundle $Q\to B$ along a submanifold $Z\subseteq B$ is a trivialization along $Z$, i.e., a section $\sigma\colon Z\to Q|_{Z}$. A principal connection $\nu\in \Omega^1(Q,\g)$ is a \emph{framed connection} if $\sigma^*\nu=0$. 
Given a manifold $M$ with two submanifolds $M_0,M_1$, we say that $\gamma\colon \II\to M$ is a \emph{based path} if $\gamma(0)\in M_0$ and $\gamma(1)\in M_1$. Let $M_\II$ be the space of 
paths $\II\to M$ of Sobolev class $r+1$, and $M_{\II,\partial \II}\subseteq M_\II$ the based paths. Given a principal bundle as above, with framings $\sigma_i\colon B_i\to Q$ along $B_i\subseteq B$, and taking $Q_i=\sig_i(B_i)$, 
we obtain a diagram of principal bundles 
\[ \xymatrix
{Q_{\II,\partial \II} \ar[r]\ar[d]^{/ G_{\II,\partial \II}} & Q_\II\ar[d]^{/G_\II}\\  B_{\II,\partial\II} \ar[r] & B_\II
}\]
Any principal connection $\nu\in \Omega^1(Q,\g)$ determines a principal connection $\nu_\II$ on the 
bundle $Q_\II$. If $\nu$ is a framed connection, then $\nu_\II$ restricts to a principal connection 
on $Q_{\II,\partial\II}$. 

As a special case, take $Q$ to be the \emph{trivial} principal $G$-bundle $Q=B\times G$ over $B=G\times\II$, with the framings along $B_0=G\times \{0\},\ B_1=G\times \{1\}$ given by
\[ \sig_0(a,0)=(a,0,e),\ \ \ \sig_1(a,1)=(a,1,a),\]
and with the principal $G$-action $k.(a,s,g)=(a,s,gk^{-1})$. Consider the inclusion 
$G\to B_{\II,\partial\II}$, taking $a\in G$ to the path $\gamma(t)=(a,t)$. 
The restriction of 
$Q_{\II,\partial \II} $ to this submanifold $G\subseteq B_{\II,\partial\II}$
is identified with 
$G_{\II,0}=\{g\in G_\II | g(0)=e\}$, by the map 
\[ G_{\II,0}\to Q_{\II,\partial \II},\ g\mapsto \big(t\mapsto (g(1),t,g(t))\big).\]
On the other hand, the map $G_\II\to \A_\II,\ g\mapsto g^{-1}\cdot 0$ restricts to a diffeomorphism
 $G_{\II,0}\cong \A_\II$.
In summary, we have a commutative diagram, 
\[ \xymatrix
{
\A_\II \ar[r]\ar[d]^{/G_{\II,\partial\II}} & (G\times\II\times G)_{\II,\partial \II}\ar[d]^{/G_{\II,\partial\II}}
\ar[r] & (G\times \II\times G)_\II \ar[d]^{/G_\II}
\\
G \ar[r] & (G\times \II)_{\II,\partial \II}\ar[r] & (G\times \II)_\II
}
\]
To incorporate the $G_\II$-action on $\A_\II$ in this picture, note that the principal action of $G$ on 
$Q$ extends to an action of $G\times G\times G$:
\[ (u,v,k).(a,s,g)=(u a v^{-1},s,ugk^{-1}).\]
It defines a $G_\II \times G_\II\times G_\II$-action on $Q_\II$, given by the same formula 
(but with $u$, $k$, etc.~ as paths). The subbundle $Q_{\II,\partial \II}$ is preserved by the subgroup of paths $(u,v,k)$ such that $u(0)=k(0)$ and $v(1)=k(1)$, and the subbundle $\A_\II$ by the subgroup 
$G_\II\subseteq G_\II\times G_\II\times G_\II$ of paths of the form $(u,v,k)(t)=(k(0),k(1),k(t))$. 

As explained above, a framed principal connection $\nu$ on $Q$ defines a principal connection $\nu_\II$ on $Q_\II$, which then pulls back to a connection on $Q_{\II,\partial\II}$. Let $\theta$ denote its 
restriction to $\A_\II$.  If $\nu$ is furthermore invariant under the action of $(u,v)\in G\times G$ by automorphisms, then $\nu_\II$ will be invariant under the $G_\II\times G_\II$-action. That is, the horizontal subbundle $\ker(\nu_\II)\subseteq TQ_\II$ is invariant not just under the gauge action, but 
under the full $G_\II\times G_\II\times G_\II$-action. It then follows that the connection $\theta$ is $G_\II$-equivariant, in the sense that the horizontal distribution $\ker(\theta)$ is $G_\II$-invariant. 

To get concrete formulas, we express the principal connection $\nu$ on $Q=B\times G$ in terms of its connection 1-forms $\kappa\in \Omega^1(B,\g)$:
\[ \nu=\Ad_{g^{-1}}\kappa+g^*\theta^L.\]
Here the variable $g$ is regarded as the projection $g\colon B\times G\to G$). The connection $\nu$ is a framed connection if and only if 
\begin{equation}\label{eq:framed}
 i_0^*\kappa=0,\ \ \ i_1^*\kappa=-a^*\theta^R,
\end{equation}
where $i_s\colon G\to B,\ a\mapsto (a,s)$. 
It is furthermore invariant under the $G\times G$-action by automorphisms if and only if 
\begin{equation}\label{eq:kinvar}
(u,v)^*\kappa=\Ad_u \kappa.\end{equation}
\begin{proposition} 
Let $\nu$ be a framed connection on $Q=B\times G$, defined by a connection 1-form  
$\kappa\in \Omega^1(G\times \II,\g)$. For $t\in \II$, let $\kappa_t=i_t^*\kappa$.  
Let $A\in \A_\II$, defining  a parallel transport $g\in G_{\II,0}$. 
Then the horizontal lift for the resulting connection 1-form $\theta$ is given at $A\in\A_\II$ by 
\[ T_{\Hol(A)}G\to T_A\A_\II,\ \ X\mapsto \partial_A \xi\]
where $\xi\in\g_\II$ is the path $\xi(t)=-\Ad_{g(t)^{-1}}\kappa_t(X)$. 
\end{proposition}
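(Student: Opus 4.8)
The plan is to trace through the identifications furnished by the caloron correspondence and reduce the horizontal lift of $\nu_\II$ to one ODE computation. First I would recall that under the diffeomorphism $\A_\II\cong G_{\II,0}$, $A\mapsto g^*\theta^L$, the connection $A$ corresponds to the parallel transport $g$ with $g(0)=e$ (so $\Hol(A)=g(1)$), and hence to the based path $\hat g(t)=(g(1),t,g(t))\in Q=(G\times\II)\times G$. Its base path $b(t)=(g(1),t)$ is precisely the image of $\Hol(A)=g(1)$ under the embedding $G\hra B_{\II,\partial\II}$, $a\mapsto(t\mapsto(a,t))$. The crucial bookkeeping point is that the fiber coordinate of $\hat g(t)$ equals the parallel transport value $g(t)$, so that the generic formula $\nu=\Ad_{h^{-1}}\kappa+h^*\theta^L$ (with $h$ the fiber coordinate) specializes along $\hat g$ to the same symbol $g(t)$.

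Next I would invoke the defining property of the path-space connection, namely that a variation $\delta\hat g$ is $\nu_\II$-horizontal exactly when $\nu(\delta\hat g(t))=0$ for every $t$ (pointwise horizontality), and that $\theta$ is the restriction of $\nu_\II$ to the subbundle $\A_\II$. A tangent vector $X\in T_{\Hol(A)}G$, left-trivialized, corresponds to the constant base variation $\delta b(t)=(X,0)$, whose $\II$-component vanishes; hence $\kappa_{(g(1),t)}(\delta b(t))=\kappa_t(X)$ with $\kappa_t=i_t^*\kappa$. Imposing $\nu(\delta b(t),\delta g(t))=0$ and solving for the left-trivialized fiber variation gives $g(t)^{-1}\delta g(t)=-\Ad_{g(t)^{-1}}\kappa_t(X)=\xi(t)$.

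I would then check that this candidate lift really lies in $T_A\A_\II$, which is where the framing hypotheses \eqref{eq:framed} enter decisively. The condition $\kappa_0=i_0^*\kappa=0$ forces $\xi(0)=0$, consistent with $g(0)=e$ being fixed and $\hat g(0)$ remaining in $Q_0$; the condition $\kappa_1=-\theta^R$ gives $\xi(1)=-\Ad_{g(1)^{-1}}\kappa_1(X)=X$, so that $\hat g(1)$ stays in $Q_1$ and, by \eqref{eq:holdiffact}, $T_A\Hol(\partial_A\xi)=X^L|_{\Hol(A)}$, confirming that the lift projects to $X$ as a horizontal lift must.

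Finally I would convert the variation of $g$ into the variation of $A$: writing $A=g^*\theta^L=g^{-1}\partial_t g$ and $\xi=g^{-1}\delta g$, the standard variation-of-Maurer--Cartan-form identity yields $\delta A=\partial_t\xi+[A,\xi]=\partial_A\xi$, which is the asserted formula. The only genuinely computational step is this last variational identity, and it is routine; the main obstacle is the bookkeeping of the several identifications — the caloron diagram, the left-trivialization $TG=G\times\g$, and the coincidence of the fiber coordinate with the parallel transport — together with verifying that the framing conditions are exactly what keep the horizontal lift tangent to $\A_\II$.
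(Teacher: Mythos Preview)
Your proof is correct and follows essentially the same route as the paper's: both use pointwise horizontality of $\nu_\II$ along the embedded path $\hat g(t)=(g(1),t,g(t))$ and the framing conditions \eqref{eq:framed} to identify the fiber variation, then translate this into a tangent vector on $\A_\II$. The only difference is direction of argument---you \emph{derive} $\xi$ by solving $\nu(\delta b(t),\delta g(t))=0$ and then compute $\delta A=\partial_A\xi$, whereas the paper \emph{postulates} $\partial_A\xi$, pushes it forward via the differential of $\A_\II\to(G\times\II\times G)_\II$ as the infinitesimal action of $(\xi(0),\xi(1),\xi)=(0,X,\xi)$, and recognizes the result $(X,0,\xi_{G_\II}|_g)$ as the $\nu_\II$-horizontal lift of $(X,0)$. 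Your presentation makes the role of the framing hypotheses slightly more transparent; the paper's makes the caloron diagram do more of the work. Both are the same proof.
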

\begin{proof}
Note that $\xi(0)=0$, while 
\[ \xi(1)=-\Ad_{\Hol(A)^{-1}}\kappa_1(X)=\iota(X)\theta^L|_{\Hol(A)}.\]  
The proposition
asserts that the horizontal lift of $X$ is given by $\partial_A\xi\in T_A\A_\II$,  the infinitesimal action of $\xi\in\g_\II$ on $\A_\II$. The image of $\partial_A\xi$ under the differential of the map 
$\A_\II\to (G\times \II\times G)_\II$ is the infinitesimal action of $(\xi(0),\xi(1),\xi)\in \g_\II\times \g_\II\times \g_\II$ at $(\Hol(A),0,g)$, that is, 
\begin{equation}\label{eq:so}
 \big(\xi(1)^L|_{\Hol(A)},\ 0,\ \xi_{G_\II}|_g\big)=(X,0,\xi_{G_\II}|_g\big).
 \end{equation}
On the other hand,  the image of $X\in T_{\Hol(A)}G$ under the differential of $G\to (G\times \II)_\II$  is the constant vector field $(X,0)\in T B_\II$, and by the formula for $\nu_\II$ in terms of the connection 1-form, \eqref{eq:so} is precisely the horizontal lift of $(X,0)$. 
\end{proof}


A convenient choice for $\kappa$ satisfying \eqref{eq:framed} as well as the invariance 
\eqref{eq:kinvar} is given by 
\begin{equation}\label{eq:nicechoice}
\kappa=-\chi(s)\ a^*\theta^R
\end{equation}
for any function $\chi\in C^\infty(\II)$ such that $\chi(0)=0,\ \chi(1)=1$.

\subsection{Caloron correspondence for $\A_{S^1}$}
The caloron correspondence for $\A_{S^1}$ runs as follows (see \cite[Example 3.4]{mur:cal}). 
Consider the trivial principal $G$-bundle $\wt{Q}=G\times \R\times G$, with the principal action of $x\in G$ 
given as
\[ x\cdot (a,s,y)=(a,s,y x^{-1}),\] 
for $a,y\in G$ and $s\in\R$. The group of integers $\Z$ acts by principal bundle automorphisms, 
$n\cdot (a,s,y)=(a,s+n,a^n y)$; the quotient is a principal bundle 
\[  Q=(G\times \R\times G)/\Z\to G\times S^1,\ \ [(a,s,y)]\mapsto (a,[s]),\]
with a canonical framing along $G\times \{[0]\}$, given by 
$(a,[0])\mapsto [(a,0,e)]$, and with a $G\times \R$-action by bundle automorphisms
\[ (a',s').[(a,s,y)]=[(\Ad_{a'}a,\ s+s',\ a'a)].\]
 Taking loops of Sobolev class $r+1$, we obtain a $G$-equivariant principal $LG$-bundle $ LQ\to L(G\times S^1)$, containing the  bundle of quasi-periodic paths $\ca{P}G$ as a $G$-equivariant subbundle:
\[ \xymatrix
{
\ca{P}G \ar[r]\ar[d]_\pi & LQ\ar[d] \\
G \ar[r] & L(G\times S^1)
}
\]
Here the lower horizontal map takes $a\in G$ to the loop 
$s\mapsto (a,s)$, while the upper horizontal map takes $g\in \ca{P}G$ to the loop, 
$s\mapsto [(\pi(g),s,g(s))]$. Similarly, working with framed loops we obtain a diagram
\[ \xymatrix
{
\A_{S^1} \ar[r]\ar[d]_{\Hol}& L_0Q\ar[d] \\
G \ar[r] & L_0(G\times S^1)
}
\]

Given a principal connection $\nu\in \Omega^1(Q,\g)$ on the bundle $ Q\to G\times S^1$, 
the loop functor determines a connection on $ LQ\to L(G\times S^1)$, which then pulls back to a connection $\theta$ on the principal $LG$-bundle $\ca{P}G\to G$. Furthermore, if $\nu$ is a \emph{framed connection}, then the resulting connection on $LQ\to LG$ restricts to a connection 
on $L_0Q$, and hence $\theta$ reduces to a connection on $\A_{S^1}\cong \ca{P}_0G\to G$.

To describe framed connections on $Q$, we use  the canonical trivialization 
of its pullback under the map $G\times \II\to G\times S^1,\ (a,s)\mapsto (a,[s])$. 
A sufficient condition for  $\chi\in C^\infty(\II)$ with $\chi(0)=0,\ \chi(1)=1$ to 
define a connection on $Q$, is that $\chi$ extends to a smooth function on $\R$, equal to $0$ for 
for $s\le 0$ and equal to $1$ for $s\ge 1$. The resulting connection $\theta$ on the loop group bundle $\ca{P}G\to G$ is again referred to as a standard connection. Connections of this type were used by Carey-Mickelsson 
\cite{car:uni}. While $\theta$ depends on the choice of $\chi$, the resulting 2-form $\varpi\in \Omega^2(\A_{S^1})$ is independent of that choice \cite{al:ati}; it is the pullback of the 
corresponding 2-form on $\A=\A_\II$. 
\end{appendix}

\def\cprime{$'$} \def\polhk#1{\setbox0=\hbox{#1}{\ooalign{\hidewidth
  \lower1.5ex\hbox{`}\hidewidth\crcr\unhbox0}}} \def\cprime{$'$}
  \def\cprime{$'$} \def\cprime{$'$} \def\cprime{$'$}
  \def\polhk#1{\setbox0=\hbox{#1}{\ooalign{\hidewidth
  \lower1.5ex\hbox{`}\hidewidth\crcr\unhbox0}}} \def\cprime{$'$}
  \def\cprime{$'$} \def\cprime{$'$} \def\cprime{$'$} \def\cprime{$'$}
\providecommand{\bysame}{\leavevmode\hbox to3em{\hrulefill}\thinspace}
\providecommand{\MR}{\relax\ifhmode\unskip\space\fi MR }
\providecommand{\MRhref}[2]{%
  \href{http://www.ams.org/mathscinet-getitem?mr=#1}{#2}
}
\providecommand{\href}[2]{#2}

\end{document}